\newtheorem{prop}{Proposition}
\newcolumntype{?}{!{\vrule width 2pt}}
\DeclareMathOperator{\rank}{rank}
\DeclareMathOperator{\mydiag}{diag}
\DeclareMathOperator{\SVD}{SVD}
\DeclareMathOperator{\sign}{sign}
\newlength{\Oldarrayrulewidth}
\newcommand{\HHline}[2]{%
  \noalign{\global\setlength{\Oldarrayrulewidth}{\arrayrulewidth}}%
  \noalign{\global\setlength{\arrayrulewidth}{#1}}\hhline{#2}%
  \noalign{\global\setlength{\arrayrulewidth}{\Oldarrayrulewidth}}}
\algrenewcommand\algorithmicrequire{\textbf{Input:}}
\algrenewcommand\algorithmicensure{\textbf{Output:}}
\algnewcommand{\algorithmicand}{\textbf{ and }}
\algnewcommand{\algorithmicor}{\textbf{ or }}
\algnewcommand{\OR}{\algorithmicor}
\algnewcommand{\AND}{\algorithmicand}
\definecolor{matblue}{rgb}{0 0.4470 0.7410}
\definecolor{matred}{rgb}{0.6350 0.0780 0.1840}
\definecolor{matyell}{rgb}{0.9290 0.6940 0.1250}
\definecolor{matpurp}{rgb}{0.4940 0.1840 0.5560}
\definecolor{matgree}{rgb}{0.4660 0.6740 0.1880}
\definecolor{grayxgray}{rgb}{0.75,0.75,0.75}
\journal{Applied Numerical Mathematics}
\begin{document}

\begin{frontmatter}



\title{An Optimal Triangle Projector with Prescribed Area and Orientation,
Application to Position-Based Dynamics}


\author{Carlos Arango Duque\corref{cor1}}
\ead{Carlos.ARANGO_DUQUE@uca.fr}
\author{Adrien Bartoli}

\cortext[cor1]{Corresponding author}
\address{EnCoV, IGT, Institut Pascal, UMR6602 CNRS Universit\'e Clermont Auvergne}

\begin{abstract}
The vast majority of mesh-based modelling applications iteratively transform the mesh vertices under prescribed geometric conditions. This occurs in particular in methods cycling through the constraint set such as Position-Based Dynamics (PBD). A common case is the approximate local area preservation of triangular 2D meshes under external editing constraints. At the constraint level, this yields the nonconvex optimal triangle projection under prescribed area problem, for which there does not currently exist a direct solution method. In current PBD implementations, the area preservation constraint is linearised. The solution comes out through the iterations, without a guarantee of optimality, and the process may fail for degenerate inputs where the vertices are colinear or colocated. We propose a closed-form solution method and its numerically robust algebraic implementation. Our method handles degenerate inputs through a two-case analysis of the problem's generic ambiguities. We show in a series of experiments in area-based 2D mesh editing that using optimal projection in place of area constraint linearisation in PBD speeds up and stabilises convergence.
\end{abstract}



\begin{highlights}
\item A comprehensive analysis of the problems of finding the closest triangle under prescribed area or prescribed area and orientation
\item An algebraic procedure to find the optimal solution or the multiple optimal solutions of these non-convex problems
\item Experiments on an application in 2D triangular mesh editing using Position-Based Dynamics (PBD) and comparison to classical PBD with constraint linearisation
\end{highlights}

\begin{keyword}
triangle, optimal projection, area preservation, orientation preservation, mesh editing, PBD



\end{keyword}

\end{frontmatter}


\section{Introduction}
A key mechanism in many mesh-based modelling applications  is to transform the mesh vertices to meet prescribed geometric conditions. For example, triangular mesh smoothing may be achieved by moving the vertices of each triangle by a specifically designed two-step stretching-shrinking transformation~\citep{sun_smoothing_2015} or by iteratively applying a local smoothing transformation~\citep{vartziotis_mesh_2008}. Another example is 3D volumetric model deformation, where realism is improved by preserving the volume of the mesh's tetrahedrons~\citep{irving_volume_2007,abu_rumman_position-based_2015}. Position-Based Dynamics (PBD) is a widely used simulation technique that directly manipulates the vertex positions of object meshes. It can model various object behaviours such as rigid body, soft body and fluids~\citep{tsai_position_2017}. Due to its simplicity, robustness and speed, PBD has become very popular in computer graphics and in the video-game industry. In general terms, PBD updates the vertex positions through simple integration of the external forces. These positions are then directly subjected to a series of constraint equations handled one at a time. If the obtained projection minimises the vertex displacement then it is qualified as optimal. For example, the projection for vertex distance preservation is a simple problem, which was solved optimally~\citep{muller_position_2007,bender_survey_2014}. The constraints simulate a wide range of effects like stretching, bending, collision, area and volume conservation~\citep{bender_survey_2014}. Over the years, improvements have been proposed to the original formulation of PBD. These include new bending constraints from simple geometric principles~\citep{kelager_triangle_2010,wang_angle_2014}, stability improvement by geometric stiffness~\citep{tournier_stable_2015} and faster convergence by constraint reordering~\citep{gu_constraint_2017}. 

Mesh editing uses a priori chosen fixed vertices and moving vertices which should respect constraints. In 2D triangular mesh editing, local area preservation is a widely used constraint. The mesh is deformed until the triangle-wise area variation is minimised. Enforcing this constraint leads to the Optimal Triangle Projection with Prescribed Area problem (OTPPA), which we will formally define shortly. OTPPA is a difficult problem and has not yet been given a closed-form solution in the literature, contrarily to optimal vertex distance preservation. We formally define OTPPA as follows.
We define the vertices $v_a$, $v_b$ and $v_c$ of a triangle in a 2D space as a 6D vector $\mathbf{v}$ with:
\begin{equation}
	\mathbf{v} = [v_a, v_b, v_c]^\top = [x_a, y_a, x_b, y_b, x_c, y_c]^\top
	\in \mathbb{R}^6. 
\end{equation}
We denote the input triangle $\tilde{\mathbf{v}}= [\tilde{x}_a, \tilde{y}_a,\tilde{x}_b, \tilde{y}_b,\tilde{x}_c, \tilde{y}_c]^\top$ and the prescribed area $A_o$. We assume $A_o>0$ out of practical considerations\footnote{$A_o=0$ implies that the resulting vertices are colinear, in which case they are simply given by the orthogonal projection of the input vertices onto a best-fit least-squares line to the input vertices.}.
The general OTPPA problem is stated as:
\begin{equation}
    \min_{\mathbf{v} \in \mathbb{R}^6} \mathscr{C}(\mathbf{v}) \quad \text{s.t.} \quad f(\mathbf{v})=0,
\end{equation}
where $\mathscr{C}(\mathbf{v})$ is the least-squares {\em displacement cost}:
\begin{equation}
	\mathscr{C}(\mathbf{v}) = \|\mathbf{v}-\tilde{\mathbf{v}}\|^2 ,
	\label{eq:cost}
\end{equation}
and $f(\mathbf{v})$ is the nonconvex {\em area preservation constraint}, defined from the triangle area function $A(\mathbf{v})$ as:
\begin{equation}
	f(\mathbf{v}) = A(\mathbf{v}) - A_o.
	\label{eq:constarea}
\end{equation}
Areas are positive quantities. This means that when we calculate the area of a triangle using its vertices, we obtain a positive value regardless of their orientation. In this formulation, the area constraint is the difference of two non-negative values. That is, the area of the triangle is constrained but not its orientation. For instance, $\mathbf{v}$ could be mirrored or two vertices could be swapped and the area constraint would still be satisfied. This could result in undesired triangle inversions in mesh editing. We thus introduce a related problem that additionally constrains the triangle orientation. We first define the triangle area function as $A(\mathbf{v}) = |A^*(\mathbf{v})|$, where $A^*(\mathbf{v})$ is the {\em signed area} given by the shoelace formula:
\begin{equation}
	A^*(\mathbf{v}) = \frac{(x_a-x_c)(y_b-y_a)-(x_a-x_b)(y_c-y_a)}{2}.
	\label{eq:areaeq}
\end{equation}
The signed area is more informative than the area. Specifically, $\sign(A^*(\mathbf{v}))$ gives the triangle orientation. We use this property to define the additional orientation constraint. This leads to the Optimal Triangle Projection with Prescribed Area and Orientation problem (OTPPAO), stated as:
\begin{equation}
    \min_{\mathbf{v} \in \mathbb{R}^6} \mathscr{C}(\mathbf{v}) \quad \text{s.t.} \quad f(\mathbf{v})=0 \quad \text{and} \quad g(\mathbf{v})=0,
    \label{eq:mini2}
\end{equation}
where $g(\mathbf{v})$ is the {\em orientation preservation constraint}:
\begin{equation}
    g(\mathbf{v}) = \sign(A^*(\mathbf{v}))-s,
    \label{eq:orconst}
\end{equation}
and $s\in \{-1,1\}$ specifies the prescribed orientation. The value chosen for $s$ depends on the application. For instance, in PBD, one would choose the orientation of the reference mesh triangle, while another possibility would be to preserve the orientation of the input triangle by setting $s=\sign(A^*(\tilde{ \mathbf{v}}))$.

For both OTPPA and OTPPAO, current PBD implementations linearise the area preservation constraint, resulting in suboptimal projection.
For OTPPAO, they also check and enforce the orientation constraint a posteriori in the inner optimisation loop.
Furthermore, they degenerate for inputs where the vertices are colinear or colocated, whereas a reliable solution should handle any input. We can thus expect an optimal projection to improve PBD convergence compared to linearisation. 
We propose a closed-form method to OTPPAO and OTPPA. Our method for OTPPAO handles degenerate inputs through a two-case analysis, guaranteeing it to find the optimal solution and returning multiple optimal solutions for ambiguous inputs. Our method for OTPPA directly relies on OTPPAO and shares the same features. We use our closed-form method to implement PBD, hence dubbed PBD-opt, for mesh editing and compare its performance with respect to the existing PBD implementation with linearisation, dubbed PBD-lin. To illustrate our proposal, we present a one triangle toy example in Figure~\ref{fig:triapbd} in which we wish to resize the triangle to half its initial area. PBD-lin takes several iterations to reach the prescribed area, whereas PBD-opt achieves it directly. The cost evolution shows that PBD-lin starts with a lower cost, but by the time it complies with the area constraint, it reaches a larger cost than PBD-opt, indicating convergence to a local suboptimal minimum.

\begin{figure}
    \centering
  \begin{subfigure}[b]{0.49\textwidth}
    \includegraphics[width=\textwidth]{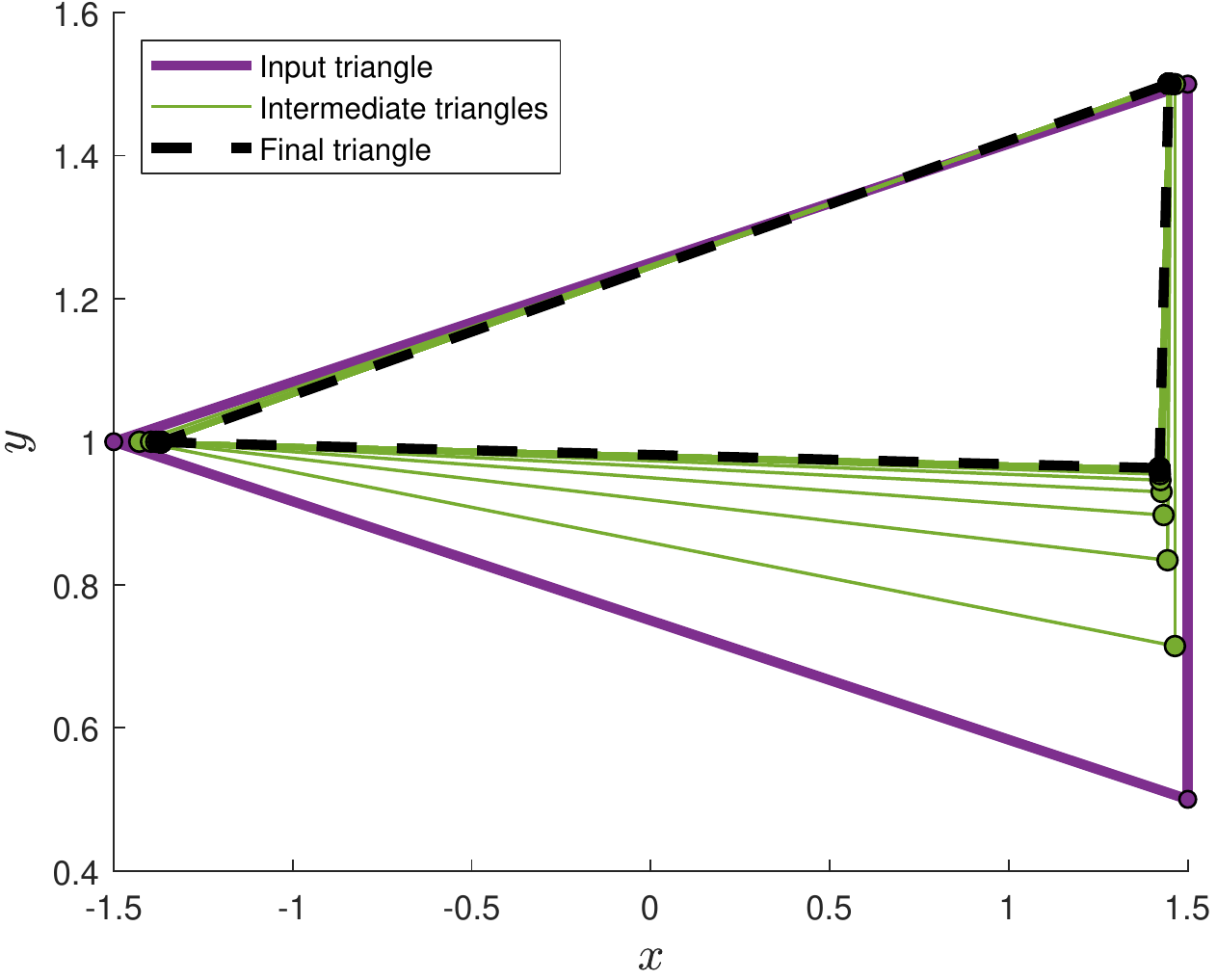}
    \caption{PBD-lin}
  \end{subfigure}
  \begin{subfigure}[b]{0.49\textwidth}
    \includegraphics[width=\textwidth]{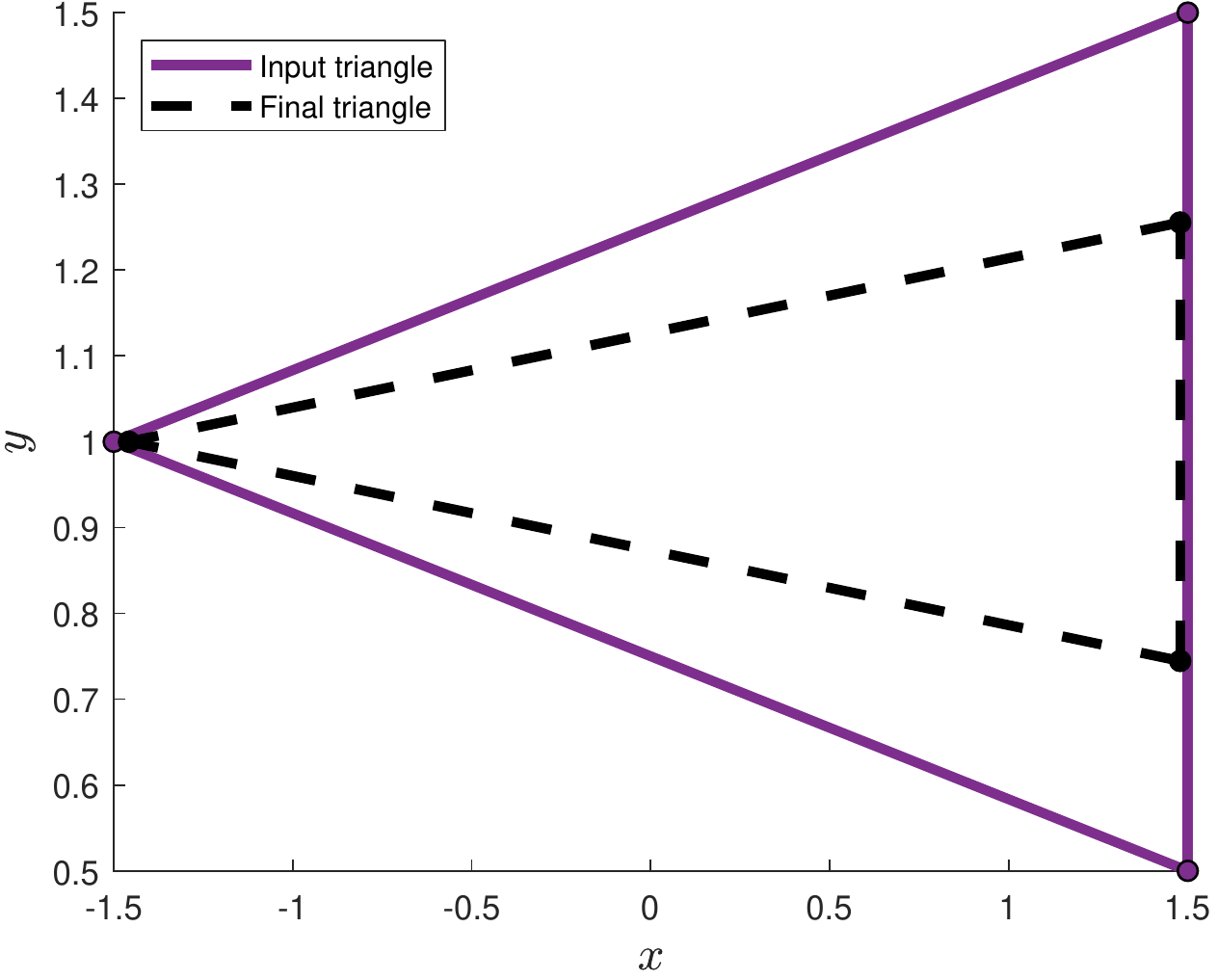}
    \caption{PBD-opt}
  \end{subfigure}\\
  \begin{subfigure}[b]{0.49\textwidth}
    \includegraphics[width=\textwidth]{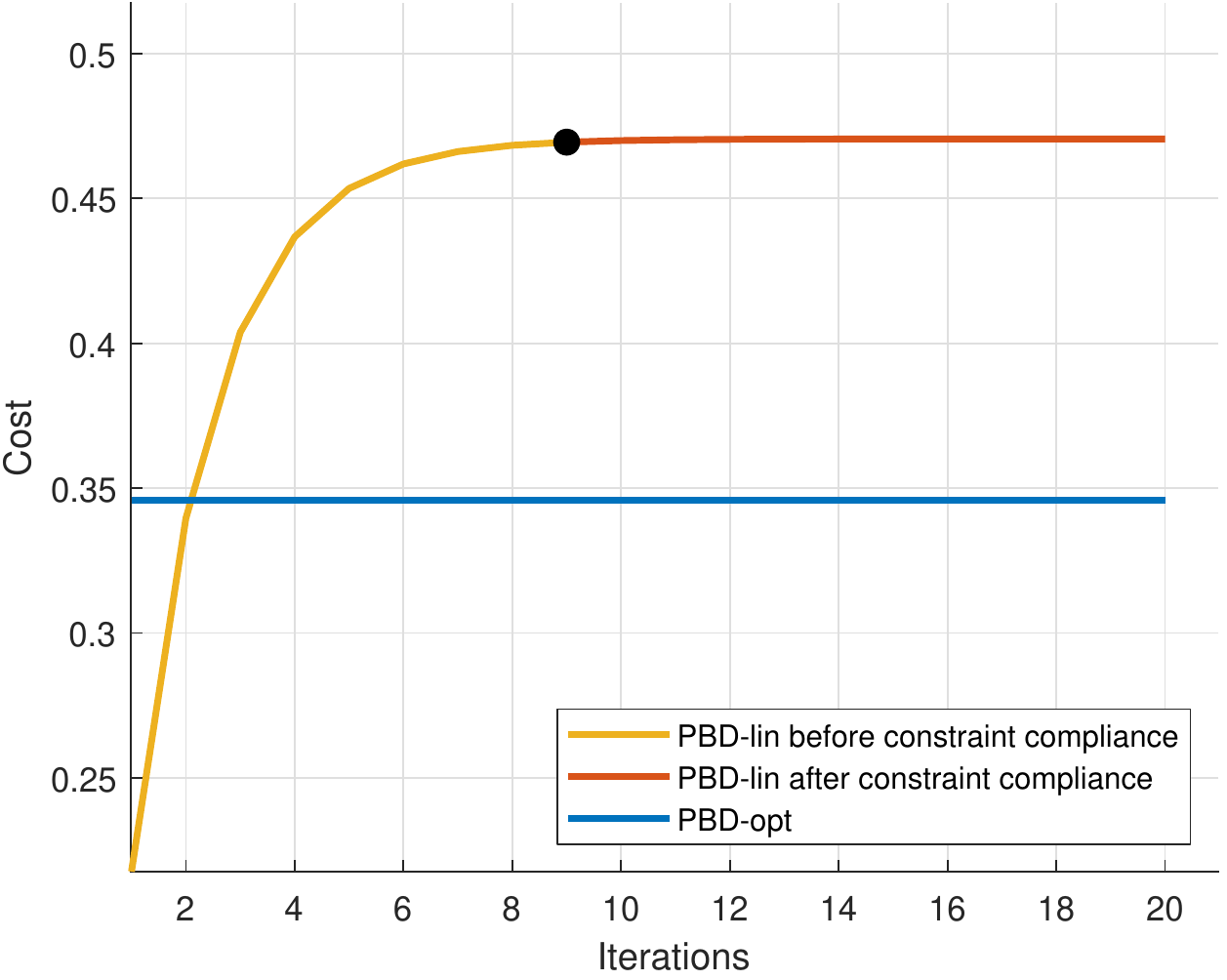}
    \caption{Evolution of cost}
  \end{subfigure}
  \begin{subfigure}[b]{0.49\textwidth}
    \includegraphics[width=\textwidth]{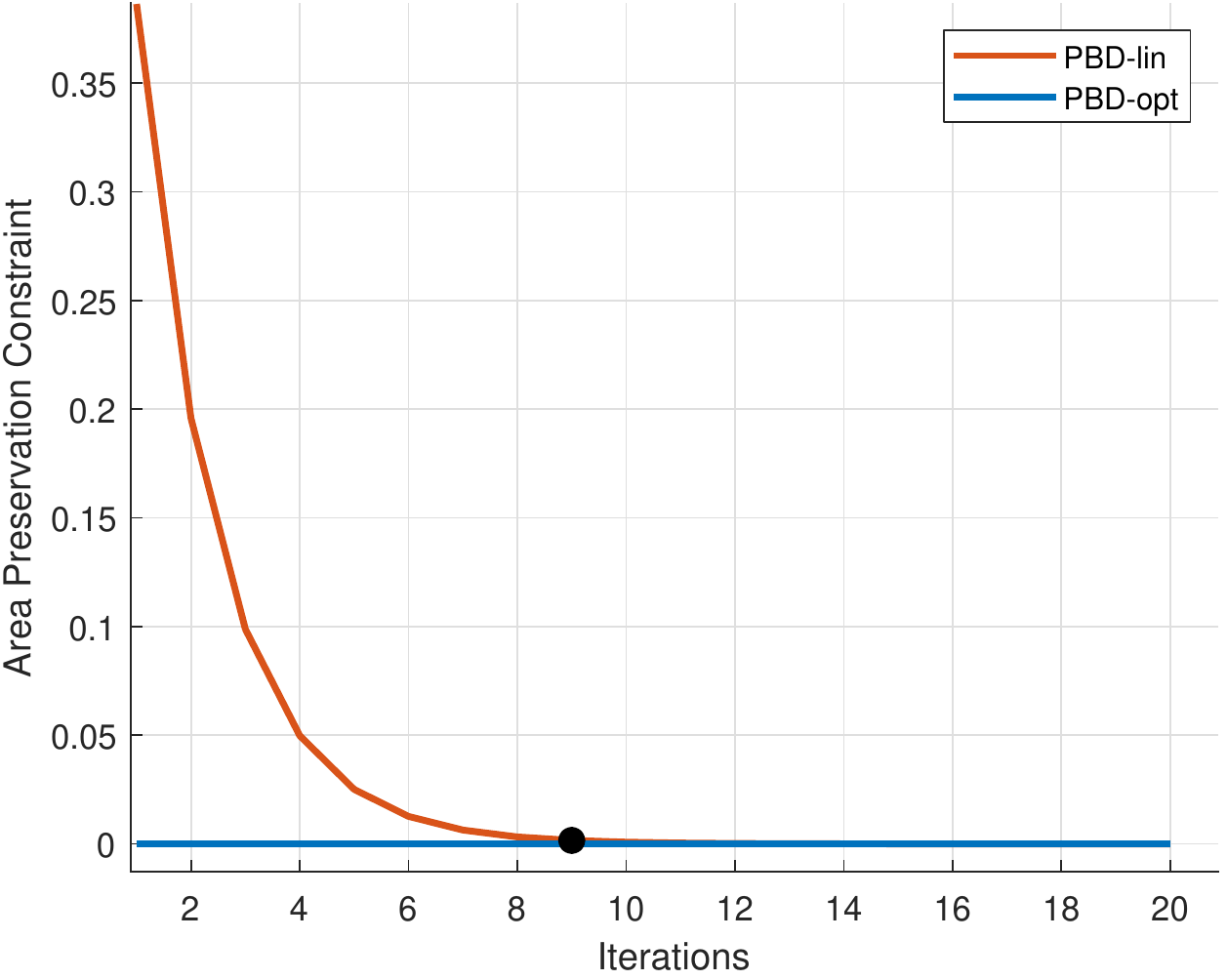}
    \caption{Evolution of area constraint}
  \end{subfigure}\\
  \caption{Method comparison on a one triangle toy example. (a), PBD-lin resizes 
  the initial triangle (purple) into smaller intermediary triangles (green) until it reaches a triangle with the prescribed area (black dashed). (b), PBD-opt directly reaches the prescribed area. (c), Comparison of the evolution of the cost of PBD-lin and the fixed cost of PBD-opt. (d), Comparison of the evolution of the prescribed area constraint of PBD-lin and the fixed area of PBD-opt.
  The cost of PBD-opt (blue) is constant as it gives a direct solution. The cost of PBD-lin is lower during the first iterations (yellow) but the area constraint is not yet fulfilled. By the time it reaches the prescribed area (black dot), the cost of PBD-lin (red) has become larger than the one of PBD-opt. Furthermore, after 20 iterations the area constraint for PBD-lin is \num{8.713e-7} compared to \num{5.507e-15} after one iteration of PBD-opt.}
  \label{fig:triapbd}
\end{figure}

This paper has two parts. In the first part, we derive our closed-form methods. We show that they deal with generic ambiguities. We then implement our methods as numerically robust algebraic procedures. In the second part, we embed our algebraic procedure for OTTPAO in PBD to form an implementation of PBD-opt. We compare its performance in convergence speed and stability with respect to the existing PBD-lin in a series of experiments. We finally give a complementary section where we present our solution to OTPPA and specialise our methods to cases where one or two triangle vertices are fixed, which is typically applicable to triangles of the domain boundary in mesh editing.

\section{Optimal Triangle Projection with Prescribed Area and Orientation}
The derivation of our closed-form method to OTPPAO starts by combining the two constraints into a single one related to both triangle area and orientation. We then construct the Lagrangian which leads to a nonconvex problem, which we handle with two cases. We distinguish and geometrically interpret the two cases based on the input vertices. In the first case, we reformulate the problem as a depressed quartic equation and solve it analytically. In the second case, we reformulate the problem as a series of homogeneous equations and find its null space. Based on these procedures, we develop a numerically robust algebraic implementation and show the results in a series of illustrative examples.

\subsection{Single Constraint Reformulation}
\label{sec:singconst}
We reformulate OTPPAO by merging the area and orientation constraints into a single equivalent constraint:
\begin{equation}
    f^*(\mathbf{v}) = 0 \quad \text{with} \quad f^*(\mathbf{v}) = sA^*(\mathbf{v})-A_o.
    \label{eq:sigconstarea}
\end{equation}
We have $\left(f(\mathbf{v})=0\right) \land  \left( g(\mathbf{v})=0 \right) \Leftrightarrow f^*(\mathbf{v})=0$.
The forward implication is obtained by rewriting $f(\mathbf{v})=0$ as $\sign(A^*(\mathbf{v}))A^*(\mathbf{v})-A_o=0$ and substituting $s=\sign(A^*(\mathbf{v}))$, as obtained from $g(\mathbf{v})=0$, directly giving $f^*(\mathbf{v})=0$.
The reverse implication is obtained by rewriting $f^*(\mathbf{v}) = 0$ as $sA^*(\mathbf{v})=A_o$, whose absolute value gives $f(\mathbf{v})=0$ and whose sign gives $g(\mathbf{v})=0$.
With this new constraint, we reformulate OTPPAO as:
\begin{equation}
    \min_{\mathbf{v} \in \mathbb{R}^6} \mathscr{C}(\mathbf{v}) \quad \text{s.t.} \quad f^*(\mathbf{v})=0.
    \label{eq:mini}
\end{equation}
This reformulation increases compactness but, more importantly, in contrast to the previous area constraint, the new constraint does not involve an absolute value.
More specifically, $f^*(\mathbf{v})$ is a nonconvex but smooth function of $\mathbf{v}$, meaning that a Lagrangian formulation can now be safely constructed.

\subsection{Lagrangian Formulation}
The Lagrangian of the OTPPAO problem~(\ref{eq:mini}) is:
\begin{equation}
	\mathscr{L}(\mathbf{v},\lambda) = \mathscr{C}(\mathbf{v}) + \lambda f^*(\mathbf{v}),
	\label{eq:lagfun}
\end{equation}
where $\lambda$ is the Lagrangian multiplier. Setting the gradient to nought we obtain:
\begin{align}
    &\frac{\partial \mathscr{L}}{\partial \lambda} = f(\mathbf{v}) =
	  sA^*(\mathbf{v}) - A_o = 0 \\
	&\frac{\partial \mathscr{L}}{\partial \mathbf{v}} = 
	\frac{\partial \mathscr{C}(\mathbf{v})}{\partial \mathbf{v}} + 
	\lambda \frac{\partial f(\mathbf{v})}{\partial \mathbf{v}} = 2(\mathbf{v}-\tilde{\mathbf{v}}) + s\lambda \frac{\partial A^*(\mathbf{v})}{\partial \mathbf{v}} = 0. 
	\label{eq:lagmult}
\end{align}
Expanding $\frac{\partial \mathscr{L}}{\partial \mathbf{v}}$, we obtain the following six equations:
\begin{align*} 
&\frac{\partial \mathscr{L}}{\partial x_a} = 2(x_a-\tilde{x}_a) + s\frac{\lambda}{2} (y_b-y_c) = 0	\\
&\frac{\partial \mathscr{L}}{\partial y_a} = 2(y_a-\tilde{y}_a) + s\frac{\lambda}{2} (x_c-x_b) = 0	\\
&\frac{\partial \mathscr{L}}{\partial x_b} = 2(x_b-\tilde{x}_b) + s\frac{\lambda}{2} (y_c-y_a) = 0	\\
&\frac{\partial \mathscr{L}}{\partial y_b} = 2(y_b-\tilde{y}_b) + s\frac{\lambda}{2} (x_a-x_c) = 0	\\
&\frac{\partial \mathscr{L}}{\partial x_c} = 2(x_c-\tilde{x}_c) + s\frac{\lambda}{2} (y_a-y_b) = 0	\\
&\frac{\partial \mathscr{L}}{\partial y_c} = 2(y_c-\tilde{y}_c) + s\frac{\lambda}{2} (x_b-x_a) = 0.
\end{align*}
We rewrite these equations in matrix form as:
\begin{equation}
    X\mathbf{v} = \tilde{\mathbf{v}},
    \label{eq:mateq}
\end{equation} 
where $X\in\mathbb{R}^{6\times6}$ is given by: 
\begin{equation}
    X =
	\begin{bmatrix}
	1 		     & 		      0 & 		     0 & s\lambda/4 & 		   0 & -s\lambda/4 \\
	0		     &   		  1 & -s\lambda/4 &		    0 &  s\lambda/4 &            0 \\
	0 		     & -s\lambda/4 & 		     1 &		    0 & 		   0 &  s\lambda/4 \\
	 s\lambda/4 & 		      0 &            0 &            1 & -s\lambda/4 &            0 \\
	0            &  s\lambda/4 &		     0 & -s\lambda/4 &			   1 & 	     	  0 \\
	-s\lambda/4 &            0 &  s\lambda/4 & 			0 &  		   0 &		      1
	\end{bmatrix}. 
	\label{eq:mat_lam}
\end{equation}

\subsection{Solving with Two Cases}
\label{sec:twocas}
We want to solve for $\mathbf{v}$ from equation~(\ref{eq:mat_lam}). We first check the invertibility of $X$ from its determinant:
\begin{equation}
	\det(X) = \frac{{(3\lambda^2-16)}^2}{256}.
\end{equation}
We thus have:
\begin{equation}
	\det(X)=0 \quad \Leftrightarrow \quad |\lambda| = \lambda_o,
	\label{eq:lambdanull}
\end{equation}
where $\lambda_o$ correspond to the inverted area of a normalized equilateral triangle:
\begin{equation}
    \lambda_o = \frac{4}{\sqrt{3}}.
\end{equation}
We show in the next section that this special case is related to input vertices representing an equilateral triangle or being colocated. We thus solve system~(\ref{eq:mat_lam}) with two cases. In Case I, which is the most general one, we have $|\lambda| \neq \lambda_o$. In Case II, we have $|\lambda| = \lambda_o$.

\subsection{Geometrically Interpreting and Distinguishing the Two Cases}

Cases I and II can be distinguished and geometrically interpreted based on three criteria: 
the linear deficiency of the input vertices $\tilde{\mathbf{v}}$, the orientation change of $\tilde{\mathbf{v}}$ and the scale of the input area $A(\tilde{\mathbf{v}})$ with respect the prescribed area $A_o$. The first criteria is evaluated as the rank of matrix $M \in \mathbb{R}^{3\times3}$ containing $\tilde{\mathbf{v}}$ in homogeneous coordinates as:
\begin{equation}
	M \stackrel{\mathrm{def}}{=} 
	\begin{bmatrix}
	\tilde{x}_a & \tilde{y}_a & 1 \\
	\tilde{x}_b & \tilde{y}_b & 1 \\
	\tilde{x}_c & \tilde{y}_c & 1
	\end{bmatrix}.
	\label{eq:Mmat}
\end{equation}
For most configurations $\rank(M) = 3$, which means that the vertices are not aligned and represent any given triangle whose area $A(\tilde{\mathbf{v}})$ is non-zero. In contrast, $\rank(M) = 2$ means that the three vertices are colinear, in which case $A(\tilde{\mathbf{v}})=0$. Finally, $\rank(M) = 1$ means that the three vertices are colocated and also implies $A(\tilde{\mathbf{v}}) = 0$. 
The second criteria is evaluated by comparing the input vertices orientation $\sign(A^*(\tilde{\mathbf{v}}))$ and the prescribed orientation $s$. When the orientation of the input vertices is preserved then $s\sign(A^*(\tilde{\mathbf{v}}))=1$. On the other hand, when the orientation of the input vertices is inverted then $s\sign(A^*(\tilde{\mathbf{v}}))=-1$. The third criteria is only useful in the special case of an equilateral triangle with preserved orientation. It refers to whether the absolute value of the scaled input area $|zA^*(\tilde{\mathbf{v}})|$ is larger, equal to or smaller than the prescribed area $A_o$ for some $z\in\mathbb{R}>0$.
The interpretation of cases I and II with the above criteria is given in table~\ref{tab:cases} and summarised by the following proposition.

\begin{prop}
\label{prop:gen}
We define a problem setting as the input vertices $\tilde{\mathbf{v}}$, the prescribed area $A_o$ and orientation $s$. Most settings fall in Case I, they are denoted $S_o$. Exceptions handled with Case II are:
\begin{itemize}
    \item $S_1$: $\tilde{\mathbf{v}}$ is a single point
    \item $S_2$: $\tilde{\mathbf{v}}$ is an equilateral triangle and
            $s\sign(A^*(\tilde{\mathbf{v}}))=-1$
    \item $S_3$: $\tilde{\mathbf{v}}$ is an equilateral triangle, $A(\tilde{\mathbf{v}})/4\geq A_o$ and $s\sign(A^*(\tilde{\mathbf{v}}))=1$
\end{itemize}
\label{prop:main}
\end{prop}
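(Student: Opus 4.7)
The plan is to characterise when a critical point of the Lagrangian at the singular value $|\lambda|=\lambda_o$ is needed to capture the global optimum, either because Case~I fails to produce a feasible critical point or because the Case~II family yields a strictly lower cost. The derivation proceeds in three stages: a null-space analysis of $X$, a Fredholm-type existence criterion for solutions at $|\lambda|=\lambda_o$, and a cost comparison with Case~I.

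First I would analyse $\ker(X)$ at $\lambda=\pm\lambda_o$. Rewriting the vertex-wise equations from~(\ref{eq:mateq}) in vector form as $v_a-\tilde v_a=-\tfrac{s\lambda}{4}R(v_b-v_c)$ and its cyclic permutations, where $R$ is the $90^\circ$ rotation coming from $\nabla_{v_a}A^*=\tfrac{1}{2}R(v_b-v_c)$, setting $\tilde{\mathbf{v}}=0$ and summing the three vector equations gives $v_a+v_b+v_c=0$, so every null vector is centred at the origin. The identity $R(v_b-v_c)=\pm\sqrt{3}\,v_a$ valid for centred equilateral triangles (with sign determined by orientation) then forces $\ker(X)$ at $\lambda=\pm\lambda_o$ to be precisely the two-dimensional family of centred equilateral triangles of one fixed orientation, with opposite signs of $s\lambda$ giving opposite orientations.

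Second, since $X$ is symmetric, by the Fredholm alternative the system $X\mathbf{v}=\tilde{\mathbf{v}}$ is solvable at $|\lambda|=\lambda_o$ iff $\tilde{\mathbf{v}}\perp\ker(X)$. Translating $\tilde{\mathbf{v}}$ to be centred (valid by joint translation invariance of cost and constraint), this becomes: the centred input contains no equilateral component of the kernel's orientation. Enumerating settings in which the resulting Case~II family is forced or strictly beats Case~I then produces the three exceptional configurations. For $S_1$, $\tilde{\mathbf{v}}$ collapses to a point and any centred equilateral of area $A_o$ is optimal. For $S_2$, by $C_3$-equivariance of both $X$ and the equilateral $\tilde{\mathbf{v}}$, every Case~I solution $X^{-1}\tilde{\mathbf{v}}$ is itself $C_3$-invariant and hence concentric equilateral of the same orientation as $\tilde{\mathbf{v}}$, so it cannot satisfy $sA^*(\mathbf{v})=A_o>0$ when $s\sign(A^*(\tilde{\mathbf{v}}))=-1$; only the Case~II branch at the sign of $\lambda_o$ for which $\tilde{\mathbf{v}}\perp\ker(X)$ can supply the orientation flip. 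For $S_3$, taking the particular solution $\mathbf{v}_0=\tilde{\mathbf{v}}/2$ (for centred $\tilde{\mathbf{v}}$) and adding a free $\mathbf{w}\in\ker(X)$ of opposite orientation, the orthogonality of the two equilateral orientations collapses the bilinear cross-terms in both cost and signed area, yielding $A^*(\mathbf{v}_0+\mathbf{w})=\tfrac{A(\tilde{\mathbf{v}})}{4}\bigl(1-4\|\mathbf{w}\|^2/\|\tilde{\mathbf{v}}\|^2\bigr)$ and cost $\tfrac{1}{4}\|\tilde{\mathbf{v}}\|^2+\|\mathbf{w}\|^2$; a feasible $\mathbf{w}$ matching $A_o$ with the correct sign exists iff $A_o\leq A(\tilde{\mathbf{v}})/4$, and this Case~II cost then falls strictly below the Case~I concentric-scaling cost $\bigl(1-\sqrt{A_o/A(\tilde{\mathbf{v}})}\bigr)^2\|\tilde{\mathbf{v}}\|^2$ throughout $A_o<A(\tilde{\mathbf{v}})/4$, with equality at the boundary.

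For all other settings, a continuity argument on the depressed quartic of Case~I guarantees a real root with $|\lambda|\neq\lambda_o$ giving the global optimum. The main obstacle is the cost comparison in $S_3$: establishing strict dominance of the Case~II family throughout $A_o\leq A(\tilde{\mathbf{v}})/4$ relies crucially on the orthogonality of centred equilateral triangles of opposite orientations (which collapses the bilinear cross-terms) and on ruling out any lower-cost Case~I critical point in this regime beyond the $C_3$-invariant concentric scaling.
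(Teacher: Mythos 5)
Your proposal is correct in substance but follows a genuinely different route from the paper. The paper works entirely through the quartic~(\ref{eq:depquar}): each lemma normalises $\tilde{\mathbf{v}}$ by a similarity transformation, evaluates $p,q,r$ explicitly, runs Cardano/Ferrari to list the roots, and compares costs of the resulting explicit vertex vectors (Lemma~5). You instead exploit the structure of $X$ directly: the isotypic decomposition of $\mathbb{R}^6$ into translations and the two orientation classes of centred equilateral triangles, on which $X$ acts by scalars; the Fredholm alternative for the symmetric singular $X$ to characterise which inputs admit stationary points at $|\lambda|=\lambda_o$; and the orthogonality of opposite-orientation equilaterals to kill the cross-terms in both the cost and the signed-area quadratic form. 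This buys shorter, more conceptual proofs of the reverse implications (the paper's Lemmas~1, 2, 4) and of the impossibility of a Case~I orientation flip in $S_2$, and your $S_3$ cost comparison reproduces the paper's Lemma~5 exactly --- your particular solution $\tilde{\mathbf{v}}/2$, the area formula, and the conclusion that $\mathscr{C}_{\mathrm{I}}-\mathscr{C}_{\mathrm{II}}\geq 0$ with equality precisely at $A_o=A(\tilde{\mathbf{v}})/4$ all check out (the paper's own difference $\frac{z-4s_3\sqrt{z}+6}{2z}$ should read $\frac{z-4s_3\sqrt{z}+4}{2z}=\frac{(\sqrt{z}-2s_3)^2}{2z}$, so your boundary-equality claim is the accurate one). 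The price is that you must separately verify that the only Case~I critical points in $S_2$ and $S_3$ are the concentric scalings; your $C_3$-equivariance argument does this, whereas the paper gets it for free from the explicit root list in its Lemmas~2 and~3.

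One step needs tightening: for the generic settings $S_o$ you invoke ``a continuity argument on the depressed quartic'' to guarantee a real root with $|\lambda|\neq\lambda_o$. A real quartic need not have real roots, so this cannot be established from the polynomial alone. The correct argument runs the other way: the feasible set is nonempty and closed, the cost is coercive, so a global minimiser exists; the constraint gradient $\frac{\partial f^*}{\partial\mathbf{v}}$ vanishes only for colocated vertices, which are infeasible since $A_o>0$, so the minimiser is a KKT point with some real $\lambda$; and your Fredholm criterion excludes $|\lambda|=\lambda_o$ for any input that is neither a point nor an equilateral triangle of the matching orientation. Hence the optimum must sit at a Case~I root --- existence of that real root is a \emph{consequence}, not a hypothesis. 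With that substitution the proof is complete.
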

\noindent The proof of proposition~\ref{prop:main} is  based on the following five lemmas.
\newtheorem{lem}{Lemma}
\begin{lem}
$S_1 \iff A(\tilde{\mathbf{v}})=0$ and $|\lambda|=\lambda_o$.
\end{lem}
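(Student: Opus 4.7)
My strategy is to exploit the 3-fold cyclic symmetry of the stationarity system $X\mathbf{v}=\tilde{\mathbf{v}}$ by diagonalising $X$ with a discrete Fourier transform across the three vertices. Writing each vertex as a complex number $z_i=x_i+\mathrm{i}y_i$ and introducing the three modes $\alpha_0=z_a+z_b+z_c$, $\alpha_1=z_a+\omega z_b+\omega^2 z_c$, $\alpha_2=z_a+\omega^2 z_b+\omega z_c$ with $\omega=e^{2\pi\mathrm{i}/3}$ (and analogously $\tilde\alpha_j$ for the input), and using that $\nabla A^{*}$ acts as a $90^\circ$ rotation on vertex differences, the six stationarity equations decouple into $\alpha_0=\tilde\alpha_0$ together with two scalar complex equations of the form $(1\mp k\sqrt 3)\alpha_j=\tilde\alpha_j$ for $j=1,2$, where $k=s\lambda/4$. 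In this basis, $|\lambda|=\lambda_o=4/\sqrt 3$ is precisely the condition $|k|\sqrt 3=1$, which annihilates the coefficient of exactly one of the two non-centroid modes, consistent with the computation of $\det(X)$ carried out earlier.

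\textbf{Forward direction.} If $\tilde{\mathbf{v}}$ is a single point, then $A(\tilde{\mathbf{v}})=0$ is immediate. Translating so that the point lies at the origin makes $\tilde\alpha_0=\tilde\alpha_1=\tilde\alpha_2=0$, so the three decoupled equations are homogeneous in $\mathbf{v}$. The constraint $sA^{*}(\mathbf{v})=A_o>0$ forbids $\mathbf{v}$ from being degenerate, so at least one of $\alpha_1,\alpha_2$ must be non-zero. This forces the corresponding coefficient $1\mp k\sqrt 3$ to vanish, hence $|\lambda|=\lambda_o$.

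\textbf{Backward direction.} Conversely, assume $A(\tilde{\mathbf{v}})=0$ and the system admits a stationary point with $|\lambda|=\lambda_o$. Exactly one coefficient $1\mp k\sqrt 3$ vanishes, and consistency of the corresponding decoupled equation forces the associated $\tilde\alpha_j=0$. After centering, $\tilde{\mathbf{v}}$ therefore carries at most one surviving non-centroid Fourier mode $\tilde\alpha_{j'}$; inverting the DFT shows that such a configuration is, up to a rigid motion, an equilateral triangle whose area is proportional to $|\tilde\alpha_{j'}|^{2}$ and hence positive whenever $\tilde\alpha_{j'}\neq 0$. Since by hypothesis $A(\tilde{\mathbf{v}})=0$, we must also have $\tilde\alpha_{j'}=0$, so all three input vertices coincide and $\tilde{\mathbf{v}}\in S_1$.

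\textbf{Main obstacle.} The only non-trivial step is the Fourier diagonalisation of $X$, in particular the emergence of the factor $\pm k\sqrt 3$ in front of the non-centroid modes, which is what quantitatively links $|\lambda|=\lambda_o$ to the loss of one Fourier mode and identifies equilateral configurations as the null directions. Once that is done, both implications reduce to a one-line case split on which mode survives, and the link to $A(\tilde{\mathbf{v}})$ then follows from the elementary fact that a centered non-trivial equilateral triangle has strictly positive area. A purely real-matrix alternative would amount to guessing equilateral null-space vectors of $X$ of both chiralities and verifying them directly, but the complex DFT keeps the algebra transparent.
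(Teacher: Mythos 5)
Your proof is correct, and it takes a genuinely different route from the paper's. The paper works with the quartic resultant in $\lambda$ (equation~(\ref{eq:quart})): for the forward implication it notes that a single-point input makes $\sigma^2(\tilde{\mathbf{v}})=0$ and $A^*(\tilde{\mathbf{v}})=0$, turning the quartic into the biquadratic $\lambda^4-\tfrac{32}{3}\lambda^2+\tfrac{256}{9}=0$ whose only roots are $\pm\lambda_o$; for the reverse implication it substitutes $|\lambda|=\lambda_o$ into the same resultant, which collapses to $s\sign(A^*(\tilde{\mathbf{v}}))\sign(\lambda)\lambda_o A(\tilde{\mathbf{v}})-\sigma^2(\tilde{\mathbf{v}})=0$ and forces $\sigma^2(\tilde{\mathbf{v}})=0$. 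You instead diagonalise the stationarity system by the cyclic DFT; I verified that the decoupled form $(1\mp k\sqrt3)\,\alpha_j=\tilde\alpha_j$ with $k=s\lambda/4$ is exactly right (it reproduces $\det(X)=(3\lambda^2-16)^2/256$ as $|1-3k^2|^2$), and with it both implications become one-line case splits: forward, because $\tilde\alpha_1=\tilde\alpha_2=0$ for a point while $A_o>0$ forces a nonzero output mode, so the matching coefficient must vanish; reverse, because solvability annihilates the input mode whose coefficient vanishes and $A(\tilde{\mathbf{v}})=0$ together with $A^*\propto|\alpha_1|^2-|\alpha_2|^2$ then kills the other. What your route buys is a structural explanation that the paper only obtains piecemeal across Lemmas~1--4 and the Case~II analysis: the two non-centroid Fourier modes are precisely the two chiralities of equilateral triangles, $|\lambda|=\lambda_o$ kills exactly one of them, and this is simultaneously why points and equilateral triangles are the only inputs reaching Case~II and why the Case~II null space consists of equilateral triangles. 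What the paper's route buys is economy: the quartic is already the central object of its solution method, so the lemma costs no new machinery. The only step you should write out rather than assert is the diagonalisation itself (a short computation using $1-\omega^2=-\mathrm{i}\sqrt3\,\omega$ and $\omega-1=-\mathrm{i}\sqrt3\,\omega^2$), together with the sign convention in $A^*=\pm\tfrac{\sqrt3}{12}\bigl(|\alpha_1|^2-|\alpha_2|^2\bigr)$; neither affects the validity of the argument.
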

\begin{lem}
$S_2 \iff A(\tilde{\mathbf{v}}) \neq 0$ and $\lambda=-\lambda_o$.
\end{lem}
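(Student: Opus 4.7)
The plan is to analyze the KKT system by diagonalizing the symmetric matrix $S$ underlying $X = I + (s\lambda/2)S$, where $S$ is defined through $2A^*(\mathbf{v}) = \mathbf{v}^\top S \mathbf{v}$. From the already-established formula $\det(X) = (3\lambda^2-16)^2/256$, one reads off the spectrum of $S$: eigenvalue $0$ of multiplicity $2$, with the 2D eigenspace equal to the global translations, and eigenvalues $\pm \sqrt{3}/2$ each of multiplicity $2$, with 2D eigenspaces $E_\pm$. The geometric crux is to identify $E_\sigma$ with the centered equilateral triangles of signed-area sign $\sigma$, which I would establish by exhibiting two explicit eigenvectors in $E_+$ (the unit centered equilateral triangle and its $90^\circ$ rotation) and checking that their linear combinations parametrize all such triangles via a rotation angle and a scale; $E_-$ follows by symmetry. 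Since $X$ is symmetric, its kernel at $\lambda = \pm \lambda_o$ is the $\mu$-eigenspace of $S$ satisfying $1 + (s\lambda/2)\mu = 0$, namely $E_s$ at $\lambda = -\lambda_o$ and $E_{-s}$ at $\lambda = +\lambda_o$.

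For the forward direction, I would assume $S_2$. Area non-vanishing is immediate from equilaterality. The centered part $\tilde{\mathbf{v}}_0$ of $\tilde{\mathbf{v}}$ lies in $E_{-s} \setminus \{0\}$ by the orientation hypothesis $s\sign(A^*(\tilde{\mathbf{v}}))=-1$. In Case I, with $|\lambda| \neq \lambda_o$, the unique solution of $X\mathbf{v} = \tilde{\mathbf{v}}$ has its centered part a scalar multiple of $\tilde{\mathbf{v}}_0$, hence again in $E_{-s}$, so $\sign(A^*(\mathbf{v})) = -s$, contradicting the area-orientation constraint $sA^*(\mathbf{v}) = A_o > 0$. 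In Case II at $\lambda = +\lambda_o$, consistency of $X\mathbf{v} = \tilde{\mathbf{v}}$ demands $\tilde{\mathbf{v}} \in \mathrm{range}(X) = E_{-s}^\perp$, which fails because $\tilde{\mathbf{v}}_0 \in E_{-s}$ is nonzero. Only $\lambda = -\lambda_o$ survives.

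For the reverse direction, I would assume $A(\tilde{\mathbf{v}}) \neq 0$ and $\lambda = -\lambda_o$, so $\ker X = E_s$ and $\mathrm{range}(X) = E_s^\perp$. Consistency of $X\mathbf{v} = \tilde{\mathbf{v}}$ forces $\tilde{\mathbf{v}} \in E_s^\perp$, placing the centered part of $\tilde{\mathbf{v}}$ in $E_{-s}$. Non-vanishing area rules out the zero-centered-part situation (which would correspond to $S_1$), leaving $\tilde{\mathbf{v}}$ as a translated centered equilateral triangle of orientation $-s$, which is exactly $S_2$.

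The hardest step will be the geometric identification of $E_\pm$ with centered equilateral triangles of fixed signed-area sign. The eigenproblem reduces by the cyclic symmetry $a \to b \to c$ to a small computation, but turning algebraic eigenvectors into the statement \emph{centered equilateral of prescribed orientation} requires an explicit parametrization by angle and scale, plus a direct check of the shoelace formula on a representative eigenvector; once that identification is in hand, the remainder of the proof is essentially symmetric linear algebra on the orthogonal eigenspace decomposition.
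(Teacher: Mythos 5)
Your proposal is correct in outline but follows a genuinely different route from the paper. The paper's proof is computational: for the forward implication it normalises the equilateral input by a similarity transformation to $[0,0,1,0,1/2,\sign(A^*(\tilde{\mathbf{v}}))\sqrt{3}/2]^\top$, evaluates the coefficients $p,q,r$ of the depressed quartic, pushes them through Cardano and Ferrari, and reads off that the only real roots are the double root $-\lambda_o$; for the reverse implication it substitutes $\lambda=-\lambda_o$ into the quartic constraint, deduces $s\sign(A^*(\tilde{\mathbf{v}}))=-1$, and completes the square to show the third vertex is pinned to a single point. You instead diagonalise the Hessian $S$ of the signed-area form, identify the $\pm\sqrt{3}/2$ eigenspaces $E_\pm$ with centred equilateral triangles of fixed orientation, and run both directions as solvability statements about the symmetric system $X\mathbf{v}=\tilde{\mathbf{v}}$: Case I is excluded because $X^{-1}$ preserves $E_{-s}$ and the signed area is quadratic (so the scalar rescaling, even if negative, cannot flip its sign), and $\lambda=+\lambda_o$ is excluded because $\tilde{\mathbf{v}}$ then has a nonzero component in $\ker X=\mathrm{range}(X)^\perp$. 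This buys a structural explanation of why $\lambda_o=4/\sqrt{3}$ and equilateral inputs are the degenerate configurations — they are the extremal vectors of the area form relative to the displacement norm — and it unifies the two directions; the cost is the eigenspace identification, which you correctly flag as the hard step, but which is essentially already established in the paper's Case II analysis (Section 2.6), where the null space basis $\mathbf{v}_1,\mathbf{v}_2$ and the column-space bases $\mathbf{u}_i^\pm$ of $X$ at $|\lambda|=\lambda_o$ are exhibited explicitly and shown to generate centred equilateral triangles of prescribed orientation via rotation and scale. Two small points to make explicit when writing this up: the translation subspace is the kernel of $S$ and is orthogonal to $E_\pm$, so the centred part of $\tilde{\mathbf{v}}$ is exactly its $E_+\oplus E_-$ component; and in the reverse direction $A(\tilde{\mathbf{v}})\neq 0$ is what guarantees the centred part is nonzero, so that membership in $E_{-s}$ actually yields an equilateral triangle rather than the colocated configuration of $S_1$.
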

\begin{lem}
$S_3 \Rightarrow A(\tilde{\mathbf{v}}) \neq 0$ and $\lambda \in \bigg\{ \lambda_o, -\lambda_o+\sqrt{\frac{\lambda_o}{A_o}} , -\lambda_o-\sqrt{\frac{\lambda_o}{A_o}}\bigg\}$.
\end{lem}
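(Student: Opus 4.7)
The first claim, $A(\tilde{\mathbf{v}}) \neq 0$, is immediate since any equilateral triangle is nondegenerate. For the second claim, my strategy is to exploit the 3-fold rotational symmetry of an equilateral input to diagonalize the Lagrangian stationarity conditions of equation~(\ref{eq:lagmult}). As a preliminary reduction, summing the six partial derivatives in that system makes the signed-area-gradient contributions telescope to zero, yielding $\sum_i (v_i - \tilde{v}_i) = 0$. Any critical point therefore preserves the centroid, so I may translate both triangles so that their common centroid sits at the origin and work within the 4-dimensional real subspace of centered vertex triples.

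On this subspace I would introduce the three-point discrete Fourier basis on the vertex triple, representing each centered triangle by two complex coefficients $w_1, w_2$ associated with the two nontrivial cube-roots-of-unity modes. By Parseval, the cost diagonalizes as $\mathscr{C}(\mathbf{v}) = \tfrac{1}{3}(|w_1 - \tilde{w}_1|^2 + |w_2 - \tilde{w}_2|^2)$, and a direct computation shows that the signed area also diagonalizes as $A^*(\mathbf{v}) = \tfrac{\sqrt{3}}{12}(|w_2|^2 - |w_1|^2)$. These two Fourier modes correspond to the two chiralities of equilateral shape. Because $\tilde{\mathbf{v}}$ is equilateral with orientation matching the prescribed $s$ (so $s\sign(A^*(\tilde{\mathbf{v}})) = 1$), the opposite-chirality coefficient vanishes, $\tilde{w}_1 = 0$, while $\tilde{w}_2 = W \neq 0$.

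With this decomposition, the stationarity conditions split into two decoupled scalar equations, $w_1(1 - \lambda\sqrt{3}/4) = 0$ and $w_2(1 + \lambda\sqrt{3}/4) = W$. Since $W \neq 0$, the second equation forbids $\lambda = -\lambda_o$ and determines $w_2$ uniquely. The first equation then branches: either $w_1 = 0$, in which case substituting the formula for $w_2$ into the area constraint $sA^*(\mathbf{v}) = A_o$ produces a quadratic equation in $\lambda$ whose two roots are precisely the two non-trivial values $-\lambda_o \pm \sqrt{\lambda_o/A_o}$ stated in the lemma; or $\lambda = \lambda_o$, in which case $w_1$ is a free complex parameter and the area constraint reduces to $|w_1|^2 = 4\sqrt{3}\bigl(A(\tilde{\mathbf{v}})/4 - A_o\bigr)$, which admits a solution exactly when $A(\tilde{\mathbf{v}})/4 \geq A_o$, the hypothesis of $S_3$. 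Together these two branches exhaust the admissible values of $\lambda$.

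The main obstacle is establishing the Fourier-basis diagonalization: verifying through a short algebraic identity that $A^*$ really takes the clean form $\tfrac{\sqrt{3}}{12}(|w_2|^2 - |w_1|^2)$ on the centered subspace, and confirming the Parseval identity for the cost. One must also carefully track the correspondence between the orientation of $\tilde{\mathbf{v}}$ and the vanishing of the opposite-chirality Fourier coefficient, which is what makes $\tilde{w}_1 = 0$ rather than $\tilde{w}_2 = 0$. Once these structural facts are in place, the remainder of the proof is a mechanical case split together with a linear and a quadratic equation.
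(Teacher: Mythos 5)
Your proof is correct, but it takes a genuinely different route from the paper. The paper normalises the equilateral input by a similarity transformation to $\tilde{\mathbf{v}}'=[0,0,1,0,1/2,\sign(A^*(\tilde{\mathbf{v}}))\sqrt{3}/2]^\top$, substitutes the resulting values $A^*(\tilde{\mathbf{v}}')=\sign(A^*(\tilde{\mathbf{v}}))\sqrt{3}/4$ and $\sigma^2(\tilde{\mathbf{v}}')=1$ into the coefficients $p,q,r$ of the depressed quartic~(\ref{eq:depquar}), observes that the discriminant $Q_1^3+Q_2^2$ of the resolvent cubic vanishes, and reads off the four roots $\{-\lambda_o-\sqrt{\lambda_o/A_o},\,-\lambda_o+\sqrt{\lambda_o/A_o},\,\lambda_o,\,\lambda_o\}$ from Ferrari's formula. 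You instead bypass the quartic entirely: the discrete Fourier diagonalisation is sound (one checks $|w_2|^2-|w_1|^2 = -2\sqrt{3}\,\mathrm{Im}(v_a\bar v_b+v_b\bar v_c+v_c\bar v_a)=4\sqrt{3}\,A^*(\mathbf{v})$, which gives your $\tfrac{\sqrt{3}}{12}$ constant, and Parseval gives the $\tfrac13$ in the cost), the stationarity system decouples into $w_1(1-\lambda/\lambda_o)=\tilde w_1$ and $w_2(1+\lambda/\lambda_o)=\tilde w_2$, and the two branches $w_1=0$ versus $\lambda=\lambda_o$ produce exactly the three distinct $\lambda$ values. Your route buys two things the paper's does not: it explains structurally why $\lambda=\lambda_o$ carries a one-parameter family (the free phase of $w_1$, which is precisely the rotational ambiguity of Case II) and why $A(\tilde{\mathbf{v}})/4\geq A_o$ is the solvability threshold, rather than having these facts emerge from a Cardano computation. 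The paper's route has the advantage of reusing machinery already built for the general case. Two small points to tie down: your identification of which Fourier coefficient vanishes implicitly assumes $s=1$ (for $s=-1$ with a negatively oriented input the roles of $w_1$ and $w_2$ swap, which you acknowledge but should make explicit, e.g.\ by a reflection reducing to $s=1$); and the closed forms $-\lambda_o\pm\sqrt{\lambda_o/A_o}$ only come out of your quadratic under the same unit-side normalisation the paper uses, where $A^*(\tilde{\mathbf{v}}')=1/\lambda_o$ --- in general coordinates your quadratic yields $\lambda=\lambda_o\bigl(-1\pm\sqrt{A(\tilde{\mathbf{v}})/A_o}\bigr)$, so the normalisation step should be stated rather than left implicit.
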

\begin{lem}
$S_3 \Leftarrow A(\tilde{\mathbf{v}}) \neq 0$ and $\lambda = \lambda_o$.
\end{lem}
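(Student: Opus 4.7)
The plan is to substitute $\lambda = \lambda_o$ into the matrix $X$ of~(\ref{eq:mat_lam}), use the solvability condition for $X\mathbf{v} = \tilde{\mathbf{v}}$ to force $\tilde{\mathbf{v}}$ into a specific geometric class, and then use the area--orientation constraint $f^*(\mathbf{v}) = 0$ to obtain the remaining inequality. I would proceed in three steps.

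First, I would substitute $\lambda = \lambda_o = 4/\sqrt{3}$ into $X$. Since $\det(X) = (3\lambda^2-16)^2/256$ vanishes to second order at $\lambda = \lambda_o$, the rank of $X$ must drop by at least two; an explicit row reduction will confirm that $\ker(X)$ is exactly two-dimensional. I expect a basis to be given by the coordinate vectors of two equilateral triangles centred at the origin, both of signed area with sign $-s$, and rotated by $\pi/2$ relative to each other. Because $X$ is symmetric (by inspection of~(\ref{eq:mat_lam})), the Fredholm alternative then tells us that $X\mathbf{v} = \tilde{\mathbf{v}}$ admits a solution if and only if $\tilde{\mathbf{v}} \perp \ker(X)$. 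Expanding these two orthogonality conditions and invoking $A(\tilde{\mathbf{v}}) \neq 0$ to rule out degenerate subcases, I would show that $\tilde{\mathbf{v}}$ must itself be equilateral and that $s\sign(A^*(\tilde{\mathbf{v}})) = 1$, which gives the first two bullets of $S_3$.

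For the third bullet, with $\tilde{\mathbf{v}}$ now equilateral and orientation matched, I would parameterise the full solution set as $\mathbf{v} = \mathbf{v}_0 + \alpha\mathbf{n}_1 + \beta\mathbf{n}_2$, where $\mathbf{v}_0$ is any particular solution of $X\mathbf{v} = \tilde{\mathbf{v}}$ and $\mathbf{n}_1,\mathbf{n}_2$ span $\ker(X)$. Substituting this parameterisation into the constraint $sA^*(\mathbf{v}) = A_o$ produces a scalar quadratic relation in $(\alpha,\beta)$, since $A^*$ is a quadratic form in $\mathbf{v}$. A standard discriminant analysis should then yield that real $(\alpha,\beta)$ exist precisely when $A(\tilde{\mathbf{v}})/4 \geq A_o$, which is exactly the third bullet of $S_3$.

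The hard part will be the explicit computation and geometric identification of $\ker(X)$ at $\lambda = \lambda_o$: showing cleanly that it is exactly two-dimensional and that its nonzero elements are coordinate vectors of equilateral triangles of orientation $-s$ is the key step that links the algebraic condition $\lambda = \lambda_o$ to the geometric structure imposed on $\tilde{\mathbf{v}}$. Once this null space is characterised, the Fredholm step delivers the equilateral and orientation conclusions essentially mechanically, and the quadratic structure of $A^*$ delivers the area inequality through a routine discriminant computation.
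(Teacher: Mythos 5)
Your proposal is correct, but it reaches the conclusion by a genuinely different route from the paper's own proof of this lemma. The paper argues through the scalar necessary condition obtained by combining the stationarity system with the area constraint, namely equation~(\ref{eq:subconsiar}) evaluated at $\lambda=\lambda_o$ (where $\delta=0$), which collapses to $s\sign(A^*(\tilde{\mathbf{v}}))\lambda_o A(\tilde{\mathbf{v}})=\sigma^2(\tilde{\mathbf{v}})$; positivity of $\sigma^2(\tilde{\mathbf{v}})$ together with $A(\tilde{\mathbf{v}})\neq 0$ forces $s\sign(A^*(\tilde{\mathbf{v}}))=1$, and after the similarity normalisation $\tilde{\mathbf{v}}'=[0,0,1,0,\tilde{x}_c',\tilde{y}_c']^\top$ the same scalar equation becomes a sum of two squares equal to zero, pinning the third vertex at the equilateral apex. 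You instead exploit the rank-$4$ structure of $X$ at $\lambda=\lambda_o$ and the Fredholm alternative; this is essentially the Case II machinery of Section~\ref{sec:case2} repurposed as the proof of the lemma, and the paper does carry out every computation you anticipate there ($\ker(X)$ two-dimensional, spanned by centred equilateral triangles of orientation $-s\sign(\lambda)$; the range characterised as translated equilateral triangles of orientation $s\sign(\lambda)$ or colocated points; and the quadratic constraint~(\ref{eq:betconst}) on $(\beta_1,\beta_2)$ whose real solvability requires $A(\tilde{\mathbf{v}})/4\geq A_o$), just not inside the proof of Lemma 4. Your approach buys a more rigorous treatment of the third bullet of $S_3$: the paper's proof only remarks that the equilateral conclusion holds ``for any value of $A(\tilde{\mathbf{v}})$'' including the stated range, whereas your discriminant step actually derives the inequality $A(\tilde{\mathbf{v}})/4\geq A_o$ from the existence of a feasible stationary point. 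One caution: your heuristic that a second-order zero of $\det(X)$ forces the rank to drop by at least two is not valid in general (a single eigenvalue may vanish to second order); you rightly defer to an explicit computation, which the paper supplies via the vanishing $5\times5$ minors and the nonzero leading $4\times4$ minor.
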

\begin{lem}
Choosing $\lambda=\lambda_o$ leads to the optimal solution for $S_3$.
\end{lem}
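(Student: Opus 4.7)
The plan is to reduce the cost $\mathscr{C}(\mathbf{v})$ at each KKT critical point to a monotone function of $\lambda^2$ by exploiting the equilateral symmetry of $\tilde{\mathbf{v}}$, and then to pick the $\lambda$ of smallest absolute value among the three candidates supplied by Lemma~3. First, the Lagrangian gradient equation~(\ref{eq:lagmult}) gives $\mathbf{v}-\tilde{\mathbf{v}} = -\tfrac{s\lambda}{2}\nabla A^*(\mathbf{v})$, so
\[
\mathscr{C}(\mathbf{v}) = \tfrac{\lambda^2}{4}\|\nabla A^*(\mathbf{v})\|^2,
\]
and a direct expansion of $\nabla A^*$ via the shoelace formula~(\ref{eq:areaeq}) identifies $\|\nabla A^*(\mathbf{v})\|^2$ with a quarter of the sum of squared edge lengths of $\mathbf{v}$.

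Second, I would exploit the three-fold symmetry of $\tilde{\mathbf{v}}$. The cost, the constraint $f^*$ and the matrix $X$ in~(\ref{eq:mateq}) are all equivariant under the cyclic relabelling $(v_a,v_b,v_c)\to(v_b,v_c,v_a)$ composed with the $120^\circ$ rotation about the centroid of $\tilde{\mathbf{v}}$. For $|\lambda|\neq\lambda_o$ the system $X\mathbf{v}=\tilde{\mathbf{v}}$ has the unique solution $X^{-1}\tilde{\mathbf{v}}$, which is therefore itself fixed by this action, hence an equilateral triangle concentric with $\tilde{\mathbf{v}}$ and of signed area $sA_o$. For $\lambda=\lambda_o$ the analogous conclusion is obtained by selecting the symmetric element of the affine solution set treated in Case~II. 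In either case $\|\nabla A^*(\mathbf{v})\|^2$ depends only on $A_o$, so $\mathscr{C}$ becomes a positive constant times $\lambda^2$. The optimum is then the $\lambda$ of smallest absolute value; the hypothesis $A(\tilde{\mathbf{v}})/4\geq A_o$ characterising $S_3$ is precisely what forces $\sqrt{\lambda_o/A_o}\geq 2\lambda_o$, whence $|{-\lambda_o\pm\sqrt{\lambda_o/A_o}}|\geq\lambda_o$, and $\lambda=\lambda_o$ achieves the minimum.

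The most delicate step will be making the symmetry argument watertight in Case~II, where $X$ is singular and its solution set is an affine subspace; one must argue that the symmetric element of that set is both feasible under $f^*=0$ and minimises $\mathscr{C}$ along the set, rather than being merely one of many KKT points. The remaining algebraic comparison of the three $|\lambda|$ values under the $S_3$ hypothesis is routine, and the $s\sign(A^*(\tilde{\mathbf{v}}))=1$ condition is what ensures the symmetric equilateral output has the correct (non-flipped) orientation.
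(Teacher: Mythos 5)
Your reduction $\mathscr{C}(\mathbf{v})=\tfrac{\lambda^2}{4}\|\nabla A^*(\mathbf{v})\|^2$ and the symmetry argument are sound for Case I: there $X$ is invertible and commutes with the combined relabelling--rotation, so $X^{-1}\tilde{\mathbf{v}}$ is a concentric equilateral triangle of area $A_o$, and your formula gives $\mathscr{C}_1=\tfrac{3\lambda_o A_o}{16}\lambda^2$, which agrees exactly with the paper's explicit $\mathscr{C}_1=(\sqrt{z}-s_3)^2/z$ under the normalisation $A_o=1/(z\lambda_o)$. The gap is precisely where you flagged it, in Case II, and your proposed fix does not work. The only element of the affine solution set $\{\beta_1\mathbf{v}_1+\beta_2\mathbf{v}_2+\mathbf{v}_p\}$ fixed by the symmetry is $\mathbf{v}_p$ itself: the null space of $X$ consists of centred equilateral triangles of orientation $-s\sign(\lambda)$, on which the action has no nonzero fixed point. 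But $\mathbf{v}_p$ has area $A(\tilde{\mathbf{v}})/4$, so it is feasible only in the boundary case $A_o=A(\tilde{\mathbf{v}})/4$. For $A_o<A(\tilde{\mathbf{v}})/4$ every feasible Case II solution is the sum of two equilateral components of \emph{opposite} orientations and is therefore not equilateral; by Weitzenb\"ock's inequality its sum of squared edge lengths strictly exceeds $3\lambda_o A_o$, so $\mathscr{C}_{II}>\tfrac{3\lambda_o A_o}{16}\lambda_o^2$. Concretely, with $A_o=1/(z\lambda_o)$ the true Case II cost is $\tfrac{1}{2}-\tfrac{1}{z}$ (as the paper computes; it tends to $\tfrac12$, not $0$, as $A_o\to0$, because the solution degenerates to a segment rather than to the centroid), whereas your formula predicts $\tfrac{1}{z}$.

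Consequently ``pick the smallest $|\lambda|$'' no longer decides the lemma: it only shows that every candidate cost is at least $\tfrac{3\lambda_o A_o}{16}\lambda_o^2$, a bound attained by none of the Case II solutions and strictly below the actual $\mathscr{C}_{II}$. The needed inequality $\mathscr{C}_1\geq\mathscr{C}_{II}$ is true but much tighter --- it becomes an equality at $A_o=A(\tilde{\mathbf{v}})/4$ --- so it cannot follow from a comparison of $|\lambda|$ values alone; one needs the actual value of $\mathscr{C}_{II}$. That is what the paper does, computing both costs explicitly in the normalised frame and reducing the comparison to the sign of $a^2-4s_3a+6$. To repair your route you would have to replace the symmetry claim in Case II by the exact cost $3\phi^2+\|\mathbf{v}_p-\tilde{\mathbf{v}}'\|^2$ (the cross term vanishes since $\mathbf{v}_p-\tilde{\mathbf{v}}'$ is orthogonal to the null space), at which point you are essentially redoing the paper's calculation.
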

\noindent The proofs of these lemmas are given in Appendix~\ref{sec:lempro}. 
\begin{proof}[Proof of proposition 1]
We recall that Case I occurs for $|\lambda| \neq \lambda_o$ and Case II for $|\lambda| = \lambda_o$. Lemmas 1, 2 and 4 show that $S_1$, $S_2$ and $S_3$ are the only possible settings corresponding to $|\lambda| = \lambda_o$, hence possibly to Case II. This proves that Case I is the general case. Lemmas 1 and 2 then trivially prove that $S_1$ and $S_2$ are handled by Case II. Finally, lemmas 3 and 5 prove that $S_3$ is also handled by Case II.
\end{proof}

\subsection{Case I}
\label{sec:case1}
Case I is the most general one. It occurs for $|\lambda| \neq \lambda_o$, equivalent to $\det(X)\neq0$. From proposition~\ref{prop:gen}, we have $\rank(M) \geq 2$, in other words, at least one of the initial vertices $\tilde{\mathbf{v}}$ is different from the other two (except if the input is an equilateral triangle under the conditions of proposition 1). We follow two steps. We first eliminate the vertices from the equations, which leads to a depressed quartic in $\lambda$. We then find the roots of this quartic using Ferrari's method and trivially solve for the vertices from the initial linear system~(\ref{eq:mateq}).

\begingroup
\setlength{\tabcolsep}{3pt} 
\begin{table}
\footnotesize
\centering
	\begin{tabular}{ccc >{\centering\arraybackslash}m{1.5cm} ccccc}
    \toprule
	\multirow{2}{*}{Case} &\multirow{2}{*}{Setting} & \multirow{2}{*}{Input} & \multirow{2}{*}{$\det(X)$} & \multirow{2}{*}{$\rank(M)$} & \multirow{2}{*}{$A(\tilde{\mathbf{v}})$} & \multirow{2}{*}{$\sigma^2(\tilde{\mathbf{v}})$} & Number of & \multirow{2}{*}{$s\sign(A(\tilde{\mathbf{v}}))$} \\
	& & & & & & & solutions \\  
    \midrule
	\multirow{6}{*}{I} & \multirow{6}{*}{$S_o$} &  \parbox[c]{1.4cm}{\includegraphics[width=1.8cm]{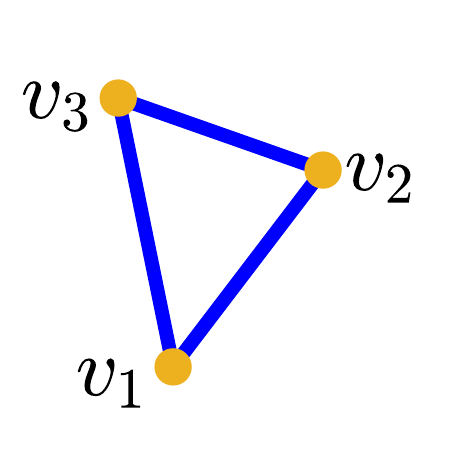}} & \multirow{6}{*}{non zero} & 3 & non zero & \multirow{6}{*}{non zero} & \multirow{6}{*}{$\leq 4$} & \multirow{6}{*}{$\pm1$} \\
    & & \parbox[c]{1.4cm}{\includegraphics[width=1.8cm]{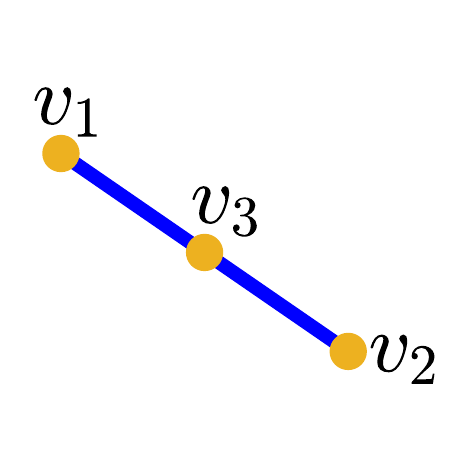}} & & 2 & zero & & & \\	
    \midrule
    II & $S_1$ &\parbox[c]{1.5cm}{\includegraphics[width=2cm, trim={0 1cm 0 0},clip]{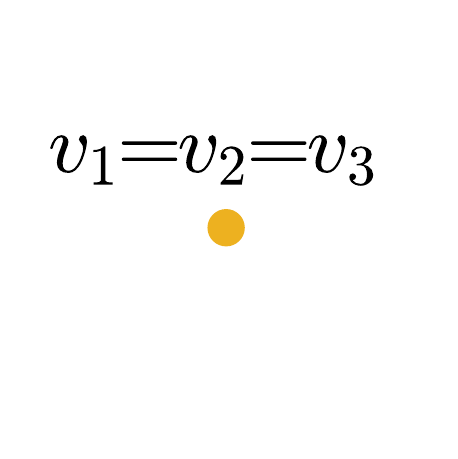}} & zero & 1 & zero & zero & $\infty$ & $\pm1$\\
    \midrule
    \multirow{2}{*}{I} & \multirow{2}{*}{$S_o$} & \multirow{6}{*}{\parbox[c]{1.55cm}{\includegraphics[width=1.8cm]{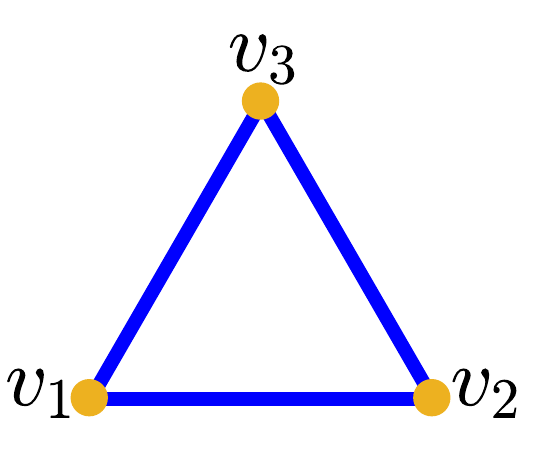}}} & \multirow{2}{*}{non zero} & \multirow{6}{*}{3} & \multirow{2}{*}{$A(\tilde{\mathbf{v}})/4 \leq A_o$} & \multirow{6}{*}{non zero} & \multirow{2}{*}{$\leq 4$} & \multirow{2}{*}{1} \\ \\
    \multirow{2}{*}{II} & \multirow{2}{*}{$S_3$} & & \multirow{2}{*}{zero} & & \multirow{2}{*}{$A(\tilde{\mathbf{v}})/4 \geq A_o$} & & \multirow{2}{*}{$\infty$} & \multirow{2}{*}{1}  \\ \\
    \multirow{2}{*}{II} & \multirow{2}{*}{$S_2$} & & \multirow{2}{*}{zero} & & \multirow{2}{*}{non zero} & & \multirow{2}{*}{$\infty$} & \multirow{2}{*}{-1}  \\ \\
    
    \bottomrule
	\end{tabular}
	\caption{Characteristics of cases I and II and number of solutions.}
	\label{tab:cases}
\end{table}
\endgroup

\subsubsection{Polynomial Reformulation}
Our reformulation proceeds by expressing the vertices in $\tilde{\mathbf{v}}$ as a function of $\lambda$ scaled by the determinant and substituting in the signed area constraint $f^*(\mathbf{v})=0$. We start by multiplying equation~(\ref{eq:mat_lam}) by the adjugate $X^*$ of $X$ and obtain:
\begin{equation}
    \det(X)\mathbf{v} = X^*\tilde{\mathbf{v}},
    \label{eq:mateqadj}
\end{equation}
where the adjugate is:
\begin{equation}
	X^*=\frac{\delta}{256}Y =\frac{3\lambda^2-16}{256}
	\begin{bmatrix}
	\lambda^2-16 &			  0 & \lambda^2    &  4s\lambda  &   \lambda^2  & -4s\lambda \\
			   0 & \lambda^2-16 & -4s\lambda  &   \lambda^2  &  4s\lambda  & \lambda^2   \\
    \lambda^2    & -4s\lambda  & \lambda^2-16 &           0  &   \lambda^2  &  4s\lambda \\
    4s\lambda & \lambda^2    &            0 & \lambda^2-16 & -4s\lambda  & \lambda^2   \\
       \lambda^2 &  4s\lambda  & \lambda^2    & -4s\lambda  & \lambda^2-16 & 0           \\
    -4s\lambda  & \lambda^2    &  4s\lambda  & \lambda^2    &            0 & \lambda^2-16
	\end{bmatrix},
	\label{eq:adjY}
\end{equation}
with $\delta = 3\lambda^2-16$ and $Y\in\mathbb{R}^{6\times6}$. We notice the following:
\begin{equation}
    \det(X) = \frac{\delta^2}{256}.
    \label{eq:deltasq}
\end{equation}
We substitute equations~(\ref{eq:adjY}) and~(\ref{eq:deltasq}) in equation~(\ref{eq:mateqadj}) and obtain:
\begin{equation}
    \delta\mathbf{v} = Y\tilde{\mathbf{v}}.
    \label{eq:mateqadj2}
\end{equation}
We observe that the signed area $A^*(\delta\mathbf{v}) = \delta^2 A^*(\mathbf{v})$. Thus, we calculate the signed area of both sides of equation~(\ref{eq:mateqadj}) and obtain:
\begin{equation}
    \delta^2 A^*(\mathbf{v}) = A^*(Y\tilde{\mathbf{v}}).
    \label{eq:detarea}
\end{equation}
After some minor manipulations, we obtain:
\begin{equation}
    A^*(Y\tilde{\mathbf{v}}) = a_2 \lambda^2 + a_1 \lambda + a_o,
    \label{eq:areayv}
\end{equation}
where:
\begin{align*}
	a_0  &=  128((\tilde{x}_a-\tilde{x}_c)(\tilde{y}_b-\tilde{y}_a)-(\tilde{x}_a-\tilde{x}_b)(\tilde{y}_c-\tilde{y}_a)) \\
	a_1  &= -64s(\tilde{x}_a^2+\tilde{x}_b^2 + \tilde{x}_c^2 +\tilde{y}_a^2+\tilde{y}_b^2 +\tilde{y}_c^2 -\tilde{x}_a\tilde{x}_b-\tilde{x}_a\tilde{x}_c-\tilde{x}_b\tilde{x}_c
	- \tilde{y}_a\tilde{y}_b-\tilde{y}_a\tilde{y}_c-\tilde{y}_b\tilde{y}_c) \\
	a_2  &= 24((\tilde{x}_a-\tilde{x}_c)(\tilde{y}_b-\tilde{y}_a)-(\tilde{x}_a-\tilde{x}_b)(\tilde{y}_c-\tilde{y}_a)).
\end{align*}
We can rewrite these coefficients more compactly. Concretely, $a_0$ and $a_2$ contain the signed area of the input vertices $A^*(\tilde{\mathbf{v}})$. Furthermore, $a_1$ is proportional to the variance of the input vertices as $a_1 = -96s\sigma^2(\tilde{\mathbf{v}})$. We substitute equation~(\ref{eq:detarea}) in the signed area constraint~(\ref{eq:sigconstarea}) multiplied by $\delta^2$ and obtain:
\begin{equation}
    sA^*(Y\tilde{\mathbf{v}}) - \delta^2 A_o = 0.
    \label{eq:subconsiar}
\end{equation}
This way, the signed area only depends on the known initial vertices $\tilde{\mathbf{v}}$ and prescribed sign $s$. Because the signed area is quadratic in the vertices, and the vertices are quadratic rational in $\lambda$, the resulting equation is a quartic in $\lambda$:
\begin{equation}
	9A_o\lambda^4 - 48(2A_o+sA^*(\tilde{\mathbf{v}}))\lambda^2 + 96 \sigma^2(\tilde{\mathbf{v}})\lambda +256(A_o - s A^*(\tilde{\mathbf{v}})) = 0.
	\label{eq:quart}
\end{equation}
This is a depressed quartic because it does not have a cubic term. 
We can thus rewrite it to the standard form by simply dividing by $9A_o$, giving:
\begin{equation}
	\lambda^4 + p\lambda^2 + q\lambda + r = 0,
	\label{eq:depquar}
\end{equation}
with:
\begin{align}
	p &= - \frac{16(2A_o + sA^*(\tilde{\mathbf{v}}))}{3A_o} \\
	q &=  \frac{32\sigma^2(\tilde{\mathbf{v}})}{3A_o} \\
	r &=  \frac{256(A_o- sA^*(\tilde{\mathbf{v}}))}{9A_o}.
\end{align}
An important question is whether we can further simplify the depressed quartic by nullifying one of its coefficients. The possible actions lie in choosing the coordinate frame in which the vertices are expressed using a proper scaled Euclidean transformation. Coefficients $p$ and $r$ are linear combinations of the triangle areas $A^*(\tilde{\mathbf{v}})$ and $A_o$ and also they are normalised by $A_o$. This means that they are scale, rotation and translation invariant, and thus cannot be cancelled. Coefficient $q$ is proportional to the variance $\sigma^2(\tilde{\mathbf{v}})$ which is also rotation and translation invariant, and thus cannot be cancelled, just scaled. Therefore, the depressed quartic cannot be simplified. The next step is to solve the depressed quartic.

\subsubsection{Solution using Ferrari's Method}
We have chosen Ferrari’s method~\citep{tignol_galois_2001,cardano_rules_1545} to solve the quartic equation\footnote{There are five main types of solution methods for a quartic equation. There does not seem to exist a consensus as to which one should be preferred in terms of stability~\citep{herbison-evans_solving_1995,harald_helfgott_modern_2010,shmakov_universal_2011}.}.
The method details and proof may be found in Appendix~\ref{sec:ferrari}.
We here give its main steps for the sake of completeness and for the construction of our numerically robust procedure in section~\ref{sec:numimp}. We first extract the resolvent cubic for equation~(\ref{eq:depquar}). 
We then use Cardano's formula to extract the real root $\alpha_o$ of the resolvent cubic as:
\begin{equation}
	\alpha_o = \sqrt[3]{Q_2+\sqrt{Q_1^3+Q_2^2}} + \sqrt[3]{Q_2-\sqrt{Q_1^3+Q_2^2}} - \frac{p}{3}
\end{equation}
where:
\begin{align}
	Q_1 &= -\frac{p^2+12r}{36} \\
	Q_2 &= \frac{2p^3 - 72rp+ 27q^2}{432}.
\end{align}
The expansion of $Q_1$ and $Q_2$ does not bring simplified expressions. We note that proposition 1 implies $q \neq 0$, thus $\alpha_o \neq 0$.
We finally use $\alpha_o$ to extract the roots of the depressed quartic as:
\begin{equation}
	\lambda = \frac{s_1\sqrt{\alpha_o}+s_2\sqrt{-\left(p+\alpha_o+s_1\frac{q}{\sqrt{2\alpha_o}}\right)}}{\sqrt{2}},
	\label{eq:roots}
\end{equation}
where $s_1,s_2 \in \{-1,1\}$, leaving four possibilities, hence four roots. 
Substituting these roots in equation~(\ref{eq:mateq}), we obtain four sets of vertices, at least one of which representing an optimal solution to OTPPAO. 

\subsection{Case II}
\label{sec:case2}
Case II is a special case. It occurs for $|\lambda| = \lambda_o$, equivalent to $\det(X)=0$. From proposition~\ref{prop:gen}, this means that the initial vertices $\tilde{\mathbf{v}}$ either are colocated as $\tilde{v}_a=\tilde{v}_b=\tilde{v}_c$ or represent equilateral triangles under the conditions of proposition~\ref{prop:gen}. We show that the problem is represented by translated homogeneous and linearly dependent equations. We find their null space and then a subset constrained by the prescribed area.

We translate the coordinate system to bring the input triangle's centroid to the origin as $\tilde{\mathbf{v}}'=\tilde{\mathbf{v}}-\bar{\mathbf{v}}$, which also translates the unknown vertices to $\mathbf{v}'=\mathbf{v}-\bar{\mathbf{v}}$.
Substituting $|\lambda| = \lambda_o$ in matrix $X$, we obtain:
\begin{equation}
    X =
	\begin{bmatrix}
	1 & 0 & 0 & \frac{s\sign(\lambda)}{\sqrt{3}} & 0 & -\frac{s\sign(\lambda)}{\sqrt{3}} \\
	0 & 1 & -\frac{s\sign(\lambda)}{\sqrt{3}} & 0 & \frac{s\sign(\lambda)}{\sqrt{3}} & 0 \\
	0 & -\frac{s\sign(\lambda)}{\sqrt{3}} & 1 & 0 & 0 & \frac{s\sign(\lambda)}{\sqrt{3}} \\
	\frac{s\sign(\lambda)}{\sqrt{3}} & 0 & 0 & 1 & -\frac{s\sign(\lambda)}{\sqrt{3}} & 0 \\
	0 & \frac{s\sign(\lambda)}{\sqrt{3}} & 0 & -\frac{s\sign(\lambda)}{\sqrt{3}} & 1 & 0 \\
	-\frac{s\sign(\lambda)}{\sqrt{3}} & 0 & \frac{s\sign(\lambda)}{\sqrt{3}} & 0 & 0 & 1
	\end{bmatrix}.
	\label{eq:mat_lamcas2}
\end{equation}
We have $\det(X) = 0$, as expected, independently of $\sign(\lambda)$.
In addition, all $5\times5$ minors of $X$ are zero and the leading $4\times4$ minor is non-zero: 
\begin{equation}
    \det\left(
	\begin{bmatrix}
	1 & 0 & 0 & \frac{s\sign(\lambda)}{\sqrt{3}} \\
	0 & 1 & -\frac{s\sign(\lambda)}{\sqrt{3}} & 0 \\
	0 & -\frac{s\sign(\lambda)}{\sqrt{3}} & 1 & 0 \\
	\frac{s\sign(\lambda)}{\sqrt{3}} & 0 & 0 & 1 
	\end{bmatrix}\right) = \frac{4}{9}.
	\label{eq:mat_lamcas2b}
\end{equation}
This means that $\rank(X)=4$. Thus, $X\mathbf{v}' = \tilde{\mathbf{v}}'$ is solvable if and only if $\tilde{\mathbf{v}}'$ lies in the column space $C(X)$. The column space can be calculated by factoring $X$ into its singular value decomposition (SVD) $X = U\Sigma U^\top$ ($X$ is symmetric) and taking the first $\rank(X)$ columns of the unitary matrix $U$.
For each value of $s\sign(\lambda)$, we have column spaces expressed as four-dimensional linear subspaces $
\{ \gamma_1\mathbf{u}_1^- +\gamma_2\mathbf{u}_2^- + \gamma_3\mathbf{u}_3^- +\gamma_4\mathbf{u}_4^- \}$ and $
\{\gamma_1\mathbf{u}_1^+ +\gamma_2\mathbf{u}_2^+ + \gamma_3\mathbf{u}_3^+ +\gamma_4\mathbf{u}_4^+\}$ where $\gamma_1,\gamma_2,\gamma_3,\gamma_4 \in \mathbb{R}$ and with bases $\mathbf{u}_1^-,\mathbf{u}_2^-,\mathbf{u}_3^-,\mathbf{u}_4^-\in \mathbb{R}^6$ and $\mathbf{u}_1^+,\mathbf{u}_2^+,\mathbf{u}_3^+,\mathbf{u}_4^+ \in \mathbb{R}^6$ such that:
\begin{equation}
	\begin{bmatrix}
		\mathbf{u}_1^- & \mathbf{u}_2^- & \mathbf{u}_3^- & \mathbf{u}_4^-
	\end{bmatrix} = 
	\begin{bmatrix}
		           0 &  \lambda_o/4 & \lambda_o/4 &           0 \\
		 \lambda_o/4 &            0 &           0 & \lambda_o/4 \\
                 1/2 & -\lambda_o/8 & \lambda_o/4 &           0 \\
		-\lambda_o/8 &         -1/2 &           0 & \lambda_o/4 \\
				-1/2 & -\lambda_o/8 & \lambda_o/4 &           0 \\
		-\lambda_o/8 &          1/2 &           0 & \lambda_o/4
	\end{bmatrix},
\end{equation}
and:
\begin{equation}
	\begin{bmatrix}
		\mathbf{u}_1^+ & \mathbf{u}_2^+ & \mathbf{u}_3^+ & \mathbf{u}_4^+
	\end{bmatrix} = 
	\begin{bmatrix}
		 \lambda_o/4 &            0 &           0 & \lambda_o/4 \\
		           0 &  \lambda_o/4 & \lambda_o/4 &           0 \\
		-\lambda_o/8 &         -1/2 &          0 &  \lambda_o/4 \\
		         1/2 & -\lambda_o/8 & \lambda_o/4 &           0 \\
		-\lambda_o/8 &          1/2 &           0 & \lambda_o/4 \\
		        -1/2 & -\lambda_o/8 & \lambda_o/4 &           0 
	\end{bmatrix}. 
\end{equation}
We have that $\mathbf{u}_1^-,\mathbf{u}_2^-,\mathbf{u}_1^+$ and $\mathbf{u}_2^+$ represent centred equilateral triangles of the same area of $\frac{1}{\lambda_o}$ with orientation $s\sign(\lambda)$ and we have that $\mathbf{u}_3^-,\mathbf{u}_4^-,\mathbf{u}_3^+$ and $\mathbf{u}_4^+$ represent sets of colocated points. The linear combinations $\gamma_1\mathbf{u}_1^- +\gamma_2\mathbf{u}_2^-$ and $\gamma_1\mathbf{u}_1^+ +\gamma_2\mathbf{u}_2^+$ represent equilateral triangles of any area and opposite orientations (or colocated points if $\gamma_1 = \gamma_2 = 0$), whilst $\gamma_3\mathbf{u}_3^- +\gamma_4\mathbf{u}_4^-$ and $\gamma_3\mathbf{u}_3^+ +\gamma_4\mathbf{u}_4^+$ represent colocated points, hence act as a translation for the vertices of the previous linear combination. 
This shows that the system is solvable if and only if $\tilde{\mathbf{v}}'$ represents an equilateral triangle of orientation $\sign(A^*(\tilde{\mathbf{v}}')) = s\sign(\lambda)$ or colocated vertices.

The system $X\mathbf{v}' = \tilde{\mathbf{v}}'$ is solved by first finding the solutions of the homogeneous system $X\mathbf{v}_h = 0$ and translating them by a particular solution $\mathbf{v}_p$, obtaining $\mathbf{v}'=\mathbf{v}_h + \mathbf{v}_p$.
The homogeneous system has an infinite number of solutions which come from the null space of $X$. 
This can be represented as a two-dimensional linear subspace $\mathbf{v}_h=\beta_1\mathbf{v}_1 +\beta_2\mathbf{v}_2$ where the coefficients $\beta_1,\beta_2 \in \mathbb{R}$ are not both zero and with bases $\mathbf{v}_1,\mathbf{v}_2\in \mathbb{R}^6$ such that:
\begin{equation}
	\begin{bmatrix}
		\mathbf{v}_1 & \mathbf{v}_2
	\end{bmatrix} = 
	\begin{bmatrix}
		-\frac{1}{2} & \frac{2s\sign(\lambda)}{\lambda_o} \\ -\frac{2s\sign(\lambda)}{\lambda_o} & -\frac{1}{2} \\ -\frac{1}{2} & -\frac{2s\sign(\lambda)}{\lambda_o} \\ 
		\frac{2s\sign(\lambda)}{\lambda_o} & -\frac{1}{2} \\ 1 & 0 \\ 0 & 1 
	\end{bmatrix}.
	\label{eq:family}
\end{equation}
We have that $\mathbf{v}_1,\mathbf{v}_2$ represent centred equilateral triangles of the same area of $\frac{3}{\lambda_o}$. The linear combination $\mathbf{v}_h=\beta_1\mathbf{v}_1 +\beta_2\mathbf{v}_2$ generates centred equilateral triangles of any area of orientation $-s\sign(\lambda)$.
We then calculate the particular solution $\mathbf{v}_p$ using the pseudo-inverse as:
\begin{equation}
    \mathbf{v}_p = X^\dagger\tilde{\mathbf{v}}' = 
    \begin{bmatrix}
		\frac{\tilde{x}_a'}{2}+\frac{\tilde{x}_b'}{4}+\frac{\tilde{x}_c'}{4} +\sign(A^*(\tilde{\mathbf{v}}'))\frac{\tilde{y}_b'-\tilde{y}_c'}{3\lambda_o}\\ 
		\frac{\tilde{y}_a'}{2}+\frac{\tilde{y}_b'}{4}+\frac{\tilde{y}_c'}{4} -\sign(A^*(\tilde{\mathbf{v}}'))\frac{\tilde{x}_b'-\tilde{x}_c'}{3\lambda_o}\\ 
		\frac{\tilde{x}_a'}{4}+\frac{\tilde{x}_b'}{2}+\frac{\tilde{x}_c'}{4} -\sign(A^*(\tilde{\mathbf{v}}'))\frac{\tilde{y}_a'-\tilde{y}_c'}{3\lambda_o}\\ 
		\frac{\tilde{y}_a'}{4}+\frac{\tilde{y}_b'}{2}+\frac{\tilde{y}_c'}{4} +\sign(A^*(\tilde{\mathbf{v}}'))\frac{\tilde{x}_a'-\tilde{x}_c'}{3\lambda_o}\\ 
		\frac{\tilde{x}_a'}{4}+\frac{\tilde{x}_b'}{4}+\frac{\tilde{x}_c'}{2} +\sign(A^*(\tilde{\mathbf{v}}'))\frac{\tilde{y}_a'-\tilde{y}_b'}{3\lambda_o}\\ 
		\frac{\tilde{y}_a'}{4}+\frac{\tilde{y}_b'}{4}+\frac{\tilde{y}_c'}{2}  -\sign(A^*(\tilde{\mathbf{v}}'))\frac{\tilde{x}_a'-\tilde{x}_b'}{3\lambda_o}
	\end{bmatrix}.
\end{equation}
We then translate the null space with the particular solution and obtain $\mathbf{v}' = \beta_1\mathbf{v}_1 +\beta_2\mathbf{v}_2 + \mathbf{v}_p$. This linear combination generates centred triangles of any area. These generated triangles are equilateral only if $\mathbf{v}_p = 0$.

The next step is to constrain these triangles to the prescribed area and orientation. After some minor algebraic manipulations, we obtain the signed area of the subspace as:
\begin{equation}
    A^*(\mathbf{v}') = \left(1 + s\sign(A^*(\tilde{\mathbf{v}}'))\sign(\lambda)\right)\frac{A^*(\tilde{\mathbf{v}}')}{8} - s\sign(\lambda)\frac{3(\beta_1^2 + \beta_2^2)}{\lambda_o}.
\end{equation}
When $A^*(\tilde{\mathbf{v}}')\neq0$ we have $s\sign(\lambda) = \sign(A^*(\tilde{\mathbf{v}}'))$ thus $\sign(\lambda) = s\sign(A^*(\tilde{\mathbf{v}}'))$. 
However, when $A^*(\tilde{\mathbf{v}}')=0$ we have $\sign(A^*(\mathbf{v}'))=-s\sign(\lambda)$, which implies that $\sign(\lambda)=-s$. Using the orientation constraint~(\ref{eq:orconst}), we can express $\mathbf{v}'$ as:
\begin{equation}
\begin{aligned}
    \qquad & \mathbf{v}' = \beta_1\mathbf{v}_1 + \beta_2\mathbf{v}_2 + \mathbf{v}_p \\
    \text{s.t.} \qquad&  \left(s + \sign(\lambda)\sign(A^*(\tilde{\mathbf{v}}'))\right)\frac{A^*(\tilde{\mathbf{v}}')}{8} - \sign(\lambda)\frac{3(\beta_1^2 + \beta_2^2)}{\lambda_o} - A_o = 0
    \label{eq:betconst}
\end{aligned}
\end{equation}
Because of the area and orientation constraints, and because $\mathbf{v}_1$ and $\mathbf{v}_2$ are rotated copies of each other, the family defined by equation~(\ref{eq:family}) can be generated by scaling $\mathbf{v}_1$ by: 
\begin{equation}
    \phi = \sqrt{\beta_1^2+\beta_2^2} = \sqrt{\frac{\lambda_o(\sign(A^*(\tilde{\mathbf{v}}'))A^*(\tilde{\mathbf{v}}')-4kA_o)}{12}},
\end{equation}
where $k$ depends on the type of input:
\begin{equation}
    k = \begin{cases}
s\sign(A^*(\tilde{\mathbf{v}}')) & \text{if $A^*(\tilde{\mathbf{v}})\neq0$}\\
-1 &\text{if $A^*(\tilde{\mathbf{v}})=0$},
\end{cases}
\end{equation}
so that the area constraint is met.
We can then rotate $\mathbf{v}_1$ by some arbitrary angle $\theta$. 
We note that when $k=s\sign(A^*(\tilde{\mathbf{v}}'))=1$, then $\phi \in \mathbb{R}$ as long as $A(\tilde{\mathbf{v}})/4\geq A_o$ (which corresponds to setting $S_3$). 
We define a new basis vector $\mathbf{v}_c$ as:
\begin{equation}
    \mathbf{v}_c = \phi
    \begin{bmatrix}
    -\frac{1}{2} & -ks\frac{2}{\lambda_o} & -\frac{1}{2} & ks\frac{2}{\lambda_o} & 1 & 0
    \end{bmatrix}^\top.
\end{equation}
We translate it to the original coordinates by $\mathbf{v}_t$, which is the addition of the particular solution $\mathbf{v}_p$ and the input's centroid $\bar{\mathbf{v}}$, and obtain:
\begin{equation}
    \mathbf{v} =  \mathcal{R}(\theta)\mathbf{v}_c + \mathbf{v}_t,
    \label{eq:case2}
\end{equation}
where $\mathcal{R}(\theta)$ is a block diagonal matrix replicating the 2D rotation matrix $R(\theta)$ three times as $\mathcal{R}(\theta)=\mydiag(R(\theta),R(\theta),R(\theta))$.
All the possible solutions have the same cost.

\subsection{Properties of the Solutions}
An important property of the solutions to OTPPAO is that they preserve the centroid of the input triangle $\tilde{\mathbf{v}}$. For case I, this is shown by substituting the vertices $\mathbf{v}$ from equation~(\ref{eq:mateqadj2}) in the centroid formula as:
\begin{align}
\begin{split}
    \delta\bar{\mathbf{v}} =
    \frac{\delta}{3}   
    \begin{bmatrix}
        x_a+x_b+x_c \\
        y_a+y_b+y_c
    \end{bmatrix}
     & = 
     \frac{1}{3}   
     \biggl[\begin{matrix}
        (\lambda^2-16)(\tilde{x}_a+\tilde{x}_b+\tilde{x}_c)+2\lambda^2(\tilde{x}_a+\tilde{x}_b+\tilde{x}_c)  \\ 
       (\lambda^2-16)(\tilde{y}_a+\tilde{y}_b+\tilde{y}_c)+2\lambda^2(\tilde{y}_a+\tilde{y}_b+\tilde{y}_c)
     \end{matrix}
     \\
     & \qquad 
    \begin{matrix}
        +4s\lambda(\tilde{y}_b - \tilde{y}_c  + \tilde{y}_c - \tilde{y}_a + \tilde{y}_a - \tilde{y}_b) \\
        +4s\lambda(\tilde{x}_b - \tilde{x}_c  + \tilde{x}_c - \tilde{x}_a + \tilde{x}_a - \tilde{x}_b) 
     \end{matrix} \biggr]
     \\
     & =
     \frac{1}{3}   
     \begin{bmatrix}
        (3\lambda^2-16)(\tilde{x}_a+\tilde{x}_b+\tilde{x}_c) \\ 
        (3\lambda^2-16)(\tilde{y}_a+\tilde{y}_b+\tilde{y}_c)
        \end{bmatrix} = 
        \frac{\delta}{3}
        \begin{bmatrix}
        \tilde{x}_a+\tilde{x}_b+\tilde{x}_c \\
        \tilde{y}_a+\tilde{y}_b+\tilde{y}_c
    \end{bmatrix}.
\end{split}
\end{align}
For case II, we similarly substitute the vertices $\mathbf{v}$ from equation~(\ref{eq:betconst}) in the centroid formula and obtain:
\begin{align}
\begin{split}
    \bar{\mathbf{v}} =
    \frac{1}{3}
    \begin{bmatrix}
        x_a+x_b+x_c \\
        y_a+y_b+y_c
    \end{bmatrix}
     & =
     \begin{bmatrix}
        (0)\beta_1 + (0)\beta_2s\sign(\lambda)\frac{2}{\lambda_o} \\ 
        (0)\beta_1s\sign(\lambda)\frac{2}{\lambda_o} + (0)\beta_2
     \end{bmatrix} \\
     &
     +
     \frac{1}{3}
    \begin{bmatrix}
        \tilde{x}_a+\tilde{x}_b+\tilde{x}_c + \sign(\lambda)\frac{(\tilde{y}_b-\tilde{y}_c-\tilde{y}_a+\tilde{y}_c+\tilde{y}_a-\tilde{y}_b)}{\lambda_o}\\
        \tilde{y}_a+\tilde{y}_b+\tilde{y}_c-\sign(\lambda)\frac{(\tilde{x}_b-\tilde{x}_c-\tilde{x}_a+\tilde{x}_c+\tilde{x}_a-\tilde{x}_b)}{\lambda_o}
    \end{bmatrix}
     \\
     & = 
    \frac{1}{3}
    \begin{bmatrix}
        \tilde{x}_a+\tilde{x}_b+\tilde{x}_c \\
        \tilde{y}_a+\tilde{y}_b+\tilde{y}_c
    \end{bmatrix}.
\end{split}
\end{align}

\subsection{Numerical Implementation}
\label{sec:numimp}
We use the theory developed in the previous sections to construct a numerically robust procedure, given in Algorithm 1, to solve OTPPAO. In theory, the first step would be to calculate $\rank(M)$, $A^*(\tilde{\mathbf{v}})$, $s\sign(A^*(\tilde{\mathbf{v}}))$ and the distance between vertices (to check if $\tilde{\mathbf{v}}$ is an equilateral triangle) and use them to branch $\tilde{\mathbf{v}}$ on Case I or Case II. However, the round-off error makes these tests unreliable. In order to deliver a numerically robust solution, both cases must be attempted, and the optimal solution chosen a posteriori. Algorithm 1 uses the input vertices $\tilde{\mathbf{v}}$, prescribed area $A_o$ and orientation $s$ as inputs. It also uses an area error tolerance $E$ to handle round-off in the area constraint~(\ref{eq:sigconstarea}). Algorithm 1 starts by generating the solutions from Case I, then Case II, and chooses the optimal one. 
For Case I, we obtain a list $\mathcal{v}_1$ of at most 4 solutions.
For Case II, we obtain a single best solution $\mathbf{v}_2$, the optimally rotated one, and the basis and offset to generate all solutions following equation~(\ref{eq:case2}).
The overall optimal solution $\mathbf{v}_o$ is chosen amongst $\mathcal{v}_1$ and $\mathbf{v}_2$.
The algorithm returns the optimal solution, along with all the solutions from Case I and Case II. 
This allows the user to deal with possible ambiguities and make the final choice depending on application specific priors and constraints.

\begin{algorithm}
    \caption{Optimal Triangle Projection with a Prescribed Area and Orientation}
    \label{alg:solsel}
    \begin{algorithmic}[1]
    \Require $\tilde{\mathbf{v}}$ - input vertices, $A_o$ - prescribed area, $s$ - prescribed orientation, $E$ - area error tolerance
    \Ensure $\mathbf{v}_o$ - optimal triangle, $\mathcal{v}_1$ - Case I triangle set, $\mathbf{v}_2$ - Case II optimal triangle, $\mathbf{v}_c, \mathbf{v}_t$ - Case II basis and translation 
    \Function{OTTPAO}{$\tilde{\mathbf{v}},A_o,s, E = 10^{-3}$}
    \State $\mathcal{v}_1 \gets$ \Call{SolveCase1}{$\tilde{\mathbf{v}},A_o,s,E$}  \Comment{Compute Case I solutions}
    \State $(\mathbf{v}_2, \mathbf{v}_c, \mathbf{v}_t) \gets$ \Call{SolveCase2}{$\tilde{\mathbf{v}},A_o,s,E$}  \Comment{Compute Case II solutions}
    \State $\mathbf{v}_o\gets$ \Call{FindTriangleOfMinimalCost}{$\mathcal{v}_1 \cup \{\mathbf{v}_2\}$}
    \Comment{Select the optimal solution}
    \State \textbf{return} $\mathbf{v}_o, \mathcal{v}_1, \mathbf{v}_2, \mathbf{v}_c, \mathbf{v}_t$
    \EndFunction
    \end{algorithmic}
\end{algorithm}

Algorithm~\ref{alg:case1} computes the possible solutions for Case I.
It first computes the coefficients $p,q,r$ of the depressed quartic equation (lines 2, 3 and 4). Then, it uses Ferrari's method, given by Algorithm~\ref{alg:ferrari}, to find possible values of the Lagrange multiplier in vector $\boldsymbol{\lambda}$.  Some values in $\boldsymbol{\lambda}$ may be complex because they do not represent a solution or because of round-off error. We thus extract the real part of $\boldsymbol{\lambda}$ (line~\ref{lst:lineverre}).
In theory, the next step would be to verify that $\lambda \neq \lambda_o$ or $\delta \neq 0$, because this would create a rank-deficiency and division by zero. 
However, this cannot be directly tested because of round-off error. 
This is better handled by taking the pseudo-inverse $\delta^{\dagger} = (3\lambda^2-16)^{\dagger}$ (line \ref{lst:linevert}), recalling that $0^\dagger=0$.
We can then simply check that the triangle complies with the area and orientation constraints (line \ref{lst:vertverif}). 

\begin{algorithm}
    \caption{Closed-form Analytic Solution to Case I of OTPPAO}
    \label{alg:case1}
    \begin{algorithmic}[1]
        \Require $\tilde{\mathbf{v}}$ - input vertices, $A_o$ - prescribed area, $s$ - prescribed orientation, $E$ - area error tolerance
        \Ensure $\mathcal{v}_1$ - solution list
		\Function{SolveCase1}{$\tilde{\mathbf{v}},A_o,s,E$}
		\State $p \gets - \frac{16(2A_o + sA^*(\tilde{\mathbf{v}}))}{3A_o}$ \Comment{Compute the coefficients of the depressed quartic} 
		\State $q \gets  \frac{32\sigma^2(\tilde{\mathbf{v}})}{3A_o}$
		\State $r \gets  \frac{256(A_o- sA^*(\tilde{\mathbf{v}}))}{9A_o}$ 
		\State $\boldsymbol{\lambda} \gets $ \Call{FerrariSolution}{$p,q,r$} \Comment{Solve for the four possible Lagrange multipliers}
		\State $\mathcal{v}_1 \gets \emptyset$ \Comment{Create an empty set of solutions}
    	\For{$t\gets 1,\dots,4$}
    	\Comment{Generate and select the triangles}
    	    \State $\lambda_o \gets \operatorname{Re}(\boldsymbol{\lambda}(t))$ \label{lst:lineverre} \Comment{Keep the real part} 
    	    \State $\delta \gets 3\lambda^2-16$ \Comment{Compute $\delta$} 
    	    \State $\mathbf{v} \gets \delta^\dagger
			    \begin{bmatrix}
	                (\lambda_o^2-16)\tilde{x}_a+\lambda_o^2(\tilde{x}_b+\tilde{x}_c) +4s\lambda_o(\tilde{y}_b - \tilde{y}_c) \\
	                (\lambda_o^2-16)\tilde{y}_a+\lambda_o^2(\tilde{y}_b+\tilde{y}_c) +4s\lambda_o(\tilde{x}_c - \tilde{x}_b) \\
	                (\lambda_o^2-16)\tilde{x}_b+\lambda_o^2(\tilde{x}_a+\tilde{x}_c) +4s\lambda_o(\tilde{y}_c - \tilde{y}_a) \\
	                (\lambda_o^2-16)\tilde{y}_b+\lambda_o^2(\tilde{y}_a+\tilde{y}_c) +4s\lambda_o(\tilde{x}_a - \tilde{x}_c) \\
	                (\lambda_o^2-16)\tilde{x}_c+\lambda_o^2(\tilde{x}_a+\tilde{x}_b) +4s\lambda_o(\tilde{y}_a - \tilde{y}_b) \\
	                (\lambda_o^2-16)\tilde{y}_c+\lambda_o^2(\tilde{y}_a+\tilde{y}_b) +4s\lambda_o(\tilde{x}_b - \tilde{x}_a) 
			    \end{bmatrix}$ 
			    \Comment{\parbox[t]{.15\linewidth}{Compute the vertices}} \label{lst:linevert}
            \If{$|s A^*(\mathbf{\mathbf{v}})-A_o|\leq E$} 
            \Comment{Check the area constraint} \label{lst:vertverif}
            \State $\mathcal{v}_1 \gets \mathcal{v}_1 \cup \{ \mathbf{v} \}$ \Comment{Add the vertices to the solution set}
    	    \EndIf
    	\EndFor 
    	\State \textbf{return} $\mathcal{v}_1$
		\EndFunction
	\end{algorithmic}
\end{algorithm}

\begin{algorithm}
    \caption{Ferrari's Solution to the Depressed Quartic}
    \label{alg:ferrari}
    \begin{algorithmic}[1]
    \Require $p,q,r$ - coefficients of depressed quartic
    \Ensure $\boldsymbol{\lambda}$ - set of four roots
		\Function{FerrariSolution}{$p,q,r$}
			\State $Q_1 \gets -\frac{p^2+12r}{36}$ \Comment{Compute Cardano's formula coefficients}
			\State $Q_2 \gets \frac{2p^3-72rp+27q^2}{432}$
			\State $\alpha_o \gets \sqrt[3]{Q_2+\sqrt{Q_1^3+Q_2^2}} + \sqrt[3]{Q_2-\sqrt{Q_1^3+Q_2^2}} - \frac{p}{3}$ \Comment{\parbox[t]{.3\linewidth}{Compute the real root of the resolvent cubic}}
			\State $\boldsymbol{\lambda} \gets \emptyset$ \Comment{Create an empty solution set}
			\For{$k_1\gets \{1,2\}$}
				\For{$k_2\gets \{1,2\}$}
					\State $\lambda_o \gets  \frac{(-1)^{k_1}\sqrt{2\alpha_o}+(-1)^{k_2}\sqrt{-\left(2p+2\alpha_o+(-1)^{k_1}\frac{2q}{\sqrt{2\alpha_o}}\right)}}{2}$ \Comment{Compute the root}
					\State $\boldsymbol{\lambda} \gets \boldsymbol{\lambda} \cup \{ \lambda_o\}$ \Comment{Add it to the solution set} 
				\EndFor
			\EndFor
		    \State \textbf{return} $\boldsymbol{\lambda}$
		\EndFunction
	\end{algorithmic}
\end{algorithm}

\begin{algorithm}
    \caption{Closed-form Analytic Solution to Case II of OTPPAO}
    \label{alg:case2}
    \begin{algorithmic}[1]
    \Require $\tilde{\mathbf{v}}$ - input vertices, $A_o$ - prescribed area, $s$ - prescribed orientation, $E$ - area error tolerance
    \Ensure $\mathbf{v}_2$ - optimal triangle, $\mathbf{v}_c, \mathbf{v}_t $ - triangle basis and translation
    \Function{SolveCase2}{$\tilde{\mathbf{v}},A_o,s,E$}
        \If{$|A^*(\mathbf{\mathbf{v}})|\leq E$} 
            \Comment{Check the Input's area} 
            \State $k \gets s\sign(A^*(\mathbf{\mathbf{v}}))$ \Comment{Compute $k$ for an equilateral triangle}
        \Else
            \State $k \gets -1$ \Comment{Compute $k$ for a single point}
    	\EndIf
    	\State $\bar{\mathbf{v}} \gets \frac{1}{3}
            \begin{bmatrix}
                \tilde{x}_a+\tilde{x}_b+\tilde{x}_c \\
                \tilde{y}_a+\tilde{y}_b+\tilde{y}_c
            \end{bmatrix}$ \Comment{Compute the centroid of the input vertices} \label{lst:linecentr}
        \State $\tilde{\mathbf{v}}' \gets \tilde{\mathbf{v}} - \bar{\mathbf{v}}$ \Comment{Translate the input vertices}
    	\State $\phi \gets  \sqrt{\frac{\lambda_o(\sign(A^*(\tilde{\mathbf{v}}))A^*(\tilde{\mathbf{v}})-4kA_o)}{12}}$ \Comment{Computes the area constraint parameter}
        \State $\mathbf{v}_c \gets \operatorname{Re}(\phi)
                \begin{bmatrix}
                    -\frac{1}{2} & -ks\frac{2}{\lambda_o} & -\frac{1}{2} & ks\frac{2}{\lambda_o} & 1 & 0
                \end{bmatrix}^\top $ \Comment{Compute the solution basis} \label{lst:linenull}
        \State $\mathbf{v}_p \gets 
			    \begin{bmatrix}
		            \frac{\tilde{x}_a'}{2}+\frac{\tilde{x}_b'}{4}+\frac{\tilde{x}_c'}{4} +\sign(A^*(\tilde{\mathbf{v}}))\frac{\tilde{y}_b'-\tilde{y}_c'}{3\lambda_o}\\ 
		            \frac{\tilde{y}_a'}{2}+\frac{\tilde{y}_b'}{4}+\frac{\tilde{y}_c'}{4} -\sign(A^*(\tilde{\mathbf{v}}))\frac{\tilde{x}_b'-\tilde{x}_c'}{3\lambda_o}\\ 
		            \frac{\tilde{x}_a'}{4}+\frac{\tilde{x}_b'}{2}+\frac{\tilde{x}_c'}{4} -\sign(A^*(\tilde{\mathbf{v}}))\frac{\tilde{y}_a'-\tilde{y}_c'}{3\lambda_o}\\ 
		            \frac{\tilde{y}_a'}{4}+\frac{\tilde{y}_b'}{2}+\frac{\tilde{y}_c'}{4} +\sign(A^*(\tilde{\mathbf{v}}))\frac{\tilde{x}_a'-\tilde{x}_c'}{3\lambda_o}\\ 
		            \frac{\tilde{x}_a'}{4}+\frac{\tilde{x}_b'}{4}+\frac{\tilde{x}_c'}{2} +\sign(A^*(\tilde{\mathbf{v}}))\frac{\tilde{y}_a'-\tilde{y}_b'}{3\lambda_o}\\ 
		            \frac{\tilde{y}_a'}{4}+\frac{\tilde{y}_b'}{4}+\frac{\tilde{y}_c'}{2}  -\sign(A^*(\tilde{\mathbf{v}}))\frac{\tilde{x}_a'-\tilde{x}_b'}{3\lambda_o}
	            \end{bmatrix}$
			    \Comment{Compute the particular solution} \label{lst:linepart}
		\State $\tilde{\mathbf{v}}_2 \gets$ rearrange $\tilde{\mathbf{v}}'$  into a $2\times3$ matrix
		\State $\mathbf{v}_{c2} \gets$ rearrange $\mathbf{v}_{c}$  into a $2\times3$ matrix
        \State $(U_1,\Sigma,U_2) \gets \SVD\left(\tilde{\mathbf{v}}_2\mathbf{v}_{c2}^{\top}\right)$\Comment{Compute the optimal rotation}
        \State $D \gets \mydiag(1,\det(U_1U_2))$
        \State $\mathbf{v}_t \gets \mathbf{v}_p + \bar{\mathbf{v}}$ \Comment{Compute the translation vector}
        \State $R \gets U_2 D U_1^\top$ 
        \State $\mathbf{v}_2 \gets  \mydiag(R,R,R)\mathbf{v}_c + \mathbf{v}_t$
            \Comment{Compute the optimal solution} \label{lst:linevercaseii}
	    \State \textbf{return} $\mathbf{v}_2, \mathbf{v}_c, \mathbf{v}_t$ 
	\EndFunction
    \end{algorithmic}
\end{algorithm}

Algorithm \ref{alg:case2} computes all the possible solutions for Case II. It achieves this by returning the rotational solution basis $\mathbf{v}_c$ (line~\ref{lst:linenull}), particular solution $\mathbf{v}_p$ (line~\ref{lst:linepart}) and offset $\bar{\mathbf{v}}$ (line~\ref{lst:linecentr}). 
With these three components, the user can generate any solution by choosing an angle $\theta$ in equation~(\ref{eq:case2}).
All solutions generated this way are theoretically equivalent and they all fulfil the area and orientation constraints.
However, because the input vertices might not be exactly colocated numerically (which is the theoretical prerequisite of Case II for a colocated vertices input), one of the solutions in the basis may stand out as having a lower score than any other one.
This solution may be, when the input vertices are close to each other, the optimal solution, even compared to Case I, owing to numerical round-off error. 
This solution is obtained by finding the optimal rotation for the cost function, the translation already being the optimal one, by solving:
\begin{equation}
	\min_{R \in SO(2)}  \|(\mathcal{R}\mathbf{v}_c+\mathbf{v}_t)-\tilde{\mathbf{v}}\|^2 \quad \mbox{with} \quad \mathcal{R}=\mydiag(R,R,R).
\end{equation}
This problem has a closed-form solution~\citep{arun_least-squares_1987-1}. 
We first rearrange $\mathbf{v}_c$ and $\tilde{\mathbf{v}}'$ into $2\times 3$ matrices $\mathbf{v}_{c2}$ and $\tilde{\mathbf{v}}_2$. We then compute the cross-covariance matrix $W =\mathbf{v}_{c2}\tilde{\mathbf{v}}_2^{\top}$ and its SVD $W = U_1\Sigma U_2^\top$.
The optimal orthogonal matrix, which could potentially contain a reflection in addition to the rotation, is $U_2 U_1^\top$. In order to preserve the triangle orientation we restrict $R$ to be a rotation only by setting $R = U_2 D U_1^\top$, where $D = \mydiag(1,\det(U_1 U_2^\top))$.
We finally use $R$ to generate the optimal solution $\mathbf{v}_2$ (line~\ref{lst:linevercaseii}). 

\subsection{Numerical Examples}
We show the results of our algebraic procedure in a series of illustrative examples presented in tables~\ref{tab:numexa1} and \ref{tab:numexa3}. Each row represents an example with a different type of input. The first column contains the input parameters (input vertices $\tilde{\mathbf{v}}$ with area $A^*(\tilde{\mathbf{v}})$, prescribed area $A_o$ and orientation $s$). The second column shows the cost $\mathscr{C}(\mathbf{v})$ and the generated area $A^*(\mathbf{v})$. For these examples, we opted to show the four solutions from Case I and the optimally rotated solution from Case II, and highlight the overall optimal solution. The third column shows the input triangle and the generated solution triangles. We draw a circle and a square in two of the vertices of the triangles to visualise the potential inversions. 

In table~\ref{tab:numexa1}, the inputs are random general triangles. For each example we want to find the optimal triangle that has the prescribed area and orientation. The first and second examples are triangles whose orientation matches the prescribed orientation, meaning that $\sign(A^*(\tilde{\mathbf{v}}))=s$, while the third example represents the opposite case, meaning that $\sign(A^*(\tilde{\mathbf{v}}))=-s$. The inputs in the second and third examples are identical, except for the prescribed orientation $s$.
In all three examples, for Case I, we observe that the first and second solutions respect the signed area constraint while the third and fourth do not. This happens because the third and fourth roots of the depressed quartic are complex and the vertices produced by these solutions are altered once we extract their real part in Algorithm 2. We also observe that the third and fourth solutions of Case I are the same. The reason is that they correspond to complex conjugate roots, and thus have the same real part. The solutions given by Case II also respect the area constraint. 
In the first and second examples, the vertices of the second solution of Case I and the solution of Case II are close to the input vertices, resulting in lower costs, however the solution of Case II is always an equilateral triangle. In both examples, the minimal cost is given by the second solution of Case I and is considered optimal. In the third example, the resulting vertices are not simple inversions of the previous solutions but new solutions that are accommodated to the prescribed orientation. In this case an optimal solution is also found, albeit at a higher cost.

In table~\ref{tab:numexa3}, the inputs are special configurations. 
The first example represents a flat triangle with colinear input vertices. In this instance, our algorithm behaves as expected, similarly to the examples with non-flat triangles, and returns an optimal solution (solutions 3 and 4 of Case I return a triangle considerably bigger than the input triangle). The second example represents an example where all the input vertices are colocated ($\tilde{v}_a=\tilde{v}_b=\tilde{v}_c$). In this instance, Case I solutions are ignored and the optimal solution is given by Case II. The solution given by Case II can be rotated at any angle but the cost remains constant. The third example is for an equilateral triangle whose orientation is the opposite of the prescribed orientation. In this example, none of the solutions give by Case I respects the area constraint and $\delta$ is very small (especially in solutions 3 and 4 of Case I where $\delta^\dagger$ is close to zero and thus returns a triangle \num{10e15} times bigger than the input triangle).
Similarly to our previous example, the optimal solution is given by Case II and it can be rotated at any angle with the cost remaining constant. In the end, our algorithmic procedure always computes the optimal solution for all six examples.

\begingroup
\setlength{\tabcolsep}{3pt} 
{\renewcommand{\arraystretch}{1.4}
\begin{table}
    \footnotesize
    \centering
	\begin{tabular}{V{4}>{\centering\arraybackslash}m{3.7cm}|>{\centering\arraybackslash}m{0.8cm}|>{\centering\arraybackslash}m{1.0cm}|>{\centering\arraybackslash}m{0.25cm}|>{\centering\arraybackslash}m{2.5cm} >{\centering\arraybackslash}m{2.5cm} >{\centering\arraybackslash}m{2.7cm}V{4}}
	\hlineB{4} 
	Input & \multicolumn{3}{c|}{Output} & \multicolumn{3}{c?}{Generated Triangles}  \\\hlineB{4} 
    \multirow{2}{*}{\vspace{-3mm}\shortstack[c]{Input: Negative \\ Oriented Triangle}} & Cost & $A^*(\mathbf{v})$ & \cellcolor{grayxgray} & 
	\multirow{4}{*}{\parbox[c]{2.8cm}{\includegraphics[scale=0.36]{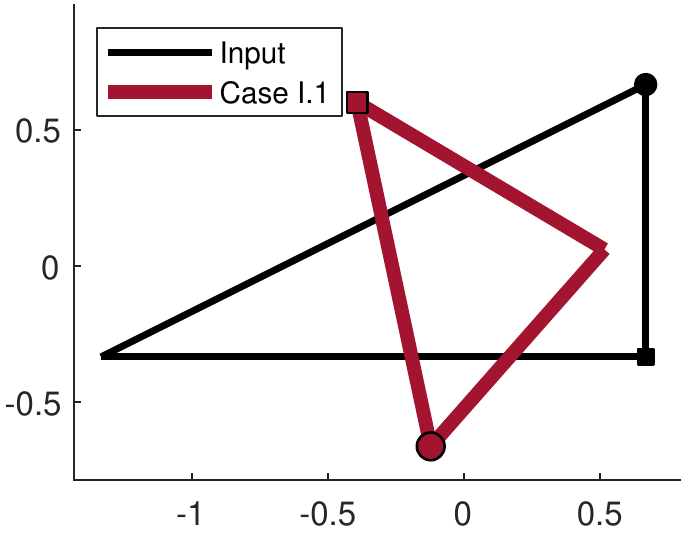}}} &
	\multirow{4}{*}{\parbox[c]{2.8cm}{\includegraphics[scale=0.36]{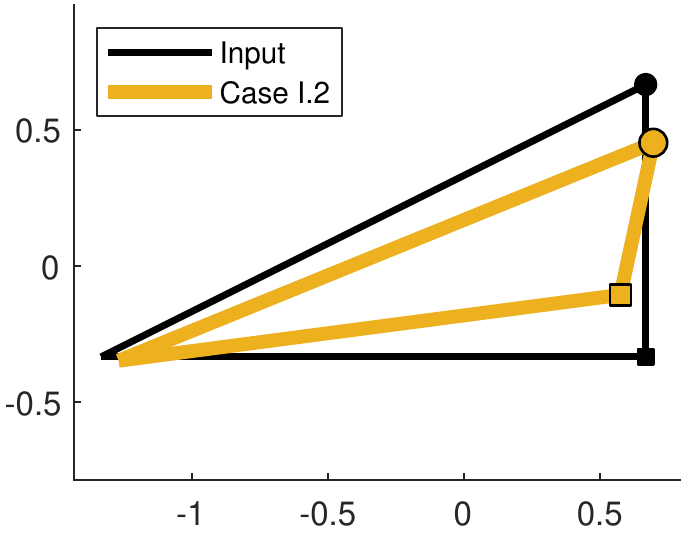}}} & \\ \cline{2-4} 
 	& \multicolumn{3}{c|}{Case I} & & & \\ \cline{2-4}
    $s=-1$ & 2.820 & -0.500 & \cellcolor{matred} & & &\multirow{4}{*}{\parbox[c]{2.8cm}{\includegraphics[scale=0.36]{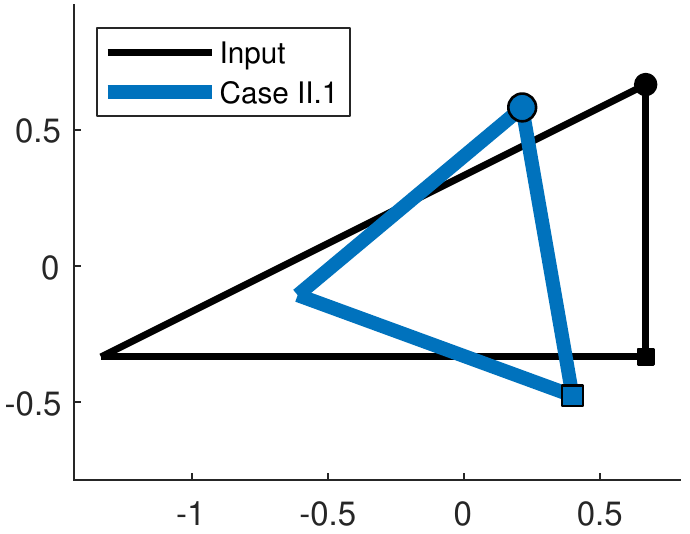}}} \\ \HHline{2pt}{|~*{3}{-}*{3}{~}|}
 	\multirow{3}{*}{$\tilde{\mathbf{v}} = \begin{bmatrix}  0.666 & 0.666 \\ 0.666 & -0.333 \\ -1.333 & -0.333 \end{bmatrix}$}
 	& \multicolumn{1}{V{4}c|}{0.334} & -0.500
 	& \multicolumn{1}{c V{4}}{\cellcolor{matyell}}
 	& & & \\ \HHline{2pt}{|~*{3}{-}*{3}{~}|}
    & 5.065 & 7.629 & \cellcolor{matpurp} & \multirow{4}{*}{\parbox[c]{2.8cm}{\includegraphics[scale=0.36]{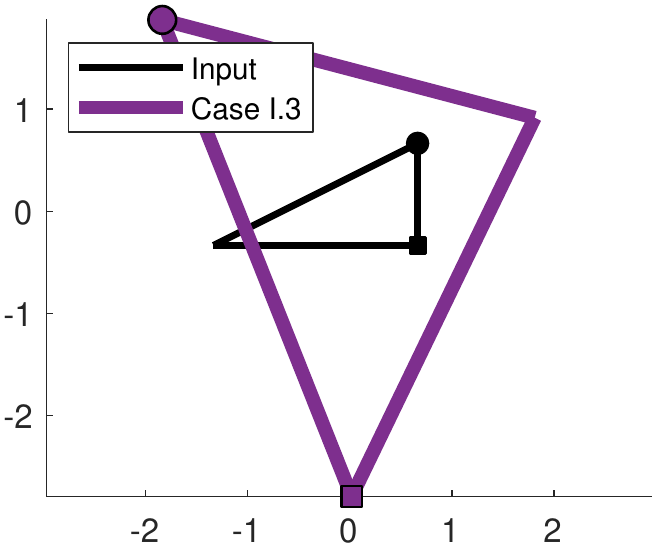}}} &
	\multirow{4}{*}{\parbox[c]{2.8cm}{\includegraphics[scale=0.36]{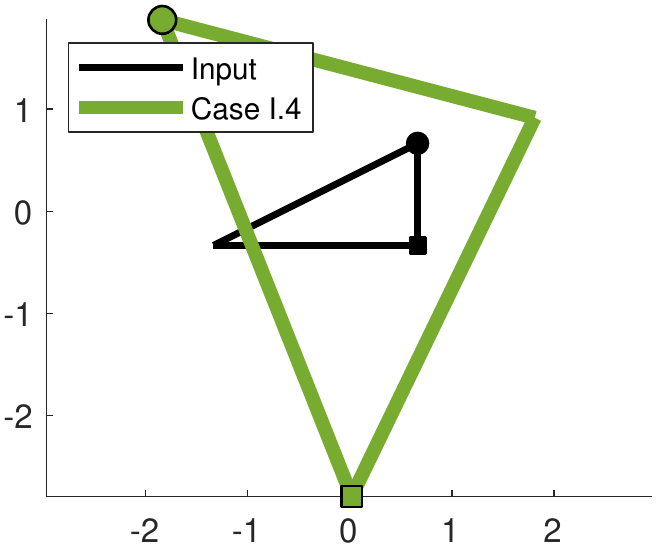}}} & \\ \cline{2-4}
    & 5.065 & 7.629 & \cellcolor{matgree} & & & 
     \\\cline{2-4}
    $A^*(\tilde{\mathbf{v}}) = -1.000$ & \multicolumn{3}{c|}{Case II} & & & \\\cline{2-4}
    $A_o = 0.500$ & 0.937 & -0.500 & \cellcolor{matblue} &  & & \\\hlineB{4} 
	\multirow{2}{*}{\vspace{-3mm}\shortstack[c]{Input: Positive \\ Oriented Triangle}} & Cost & $A^*(\mathbf{v})$ & \cellcolor{grayxgray} & 
	\multirow{4}{*}{\parbox[c]{2.8cm}{\includegraphics[scale=0.36]{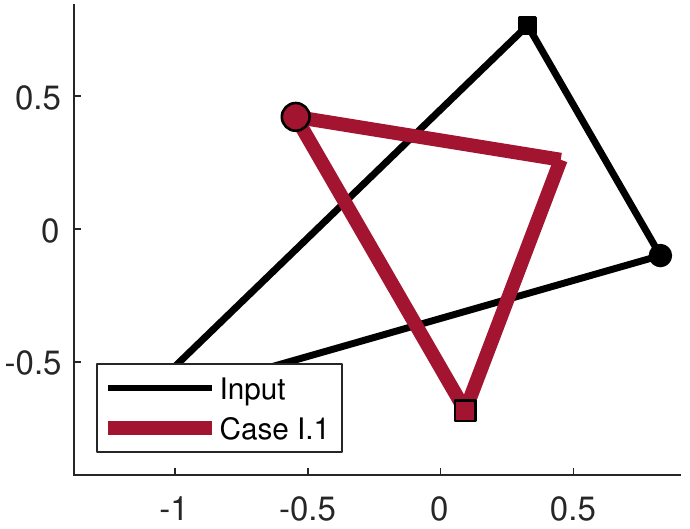}}} &
	\multirow{4}{*}{\parbox[c]{2.8cm}{\includegraphics[scale=0.36]{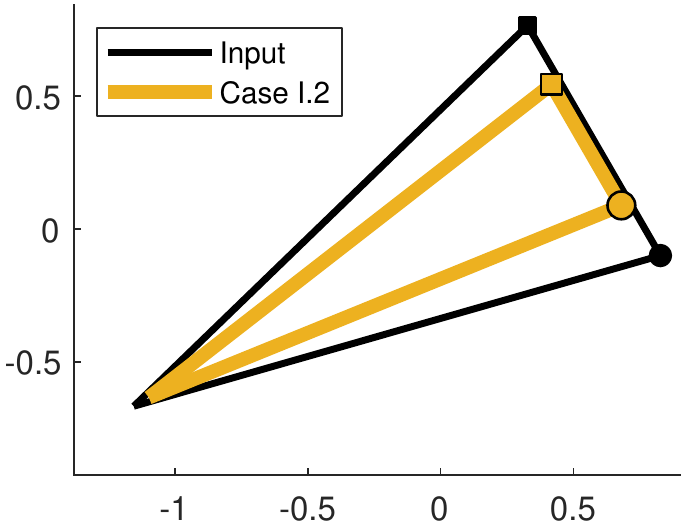}}} & \\ \cline{2-4} 
 	 & \multicolumn{3}{c|}{Case I} & & & \\ \cline{2-4}
 	 $s=1$ & 2.785 & 0.500 & \cellcolor{matred} & & &\multirow{4}{*}{\parbox[c]{2.8cm}{\includegraphics[scale=0.36]{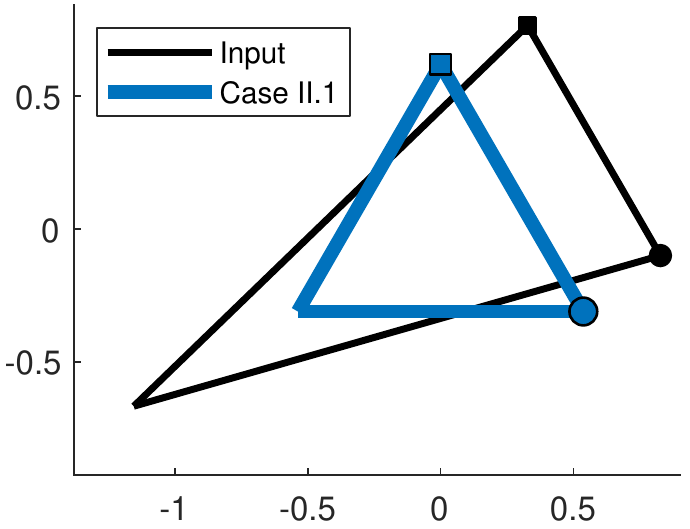}}} \\\HHline{2pt}{|~*{3}{-}*{3}{~}|}
 	 \multirow{3}{*}{$\tilde{\mathbf{v}} = \begin{bmatrix}  0.827 &  -0.100 \\ 0.327 & 0.766 \\ -1.155 & -0.667 \end{bmatrix}$}
 	& \multicolumn{1}{V{4}c|}{0.345} & 0.500
 	& \multicolumn{1}{c V{4}}{\cellcolor{matyell}} & & & \\ \HHline{2pt}{|~*{3}{-}*{3}{~}|}
    & 5.078 & -7.919 & \cellcolor{matpurp} & \multirow{4}{*}{\parbox[c]{2.8cm}{\includegraphics[scale=0.36]{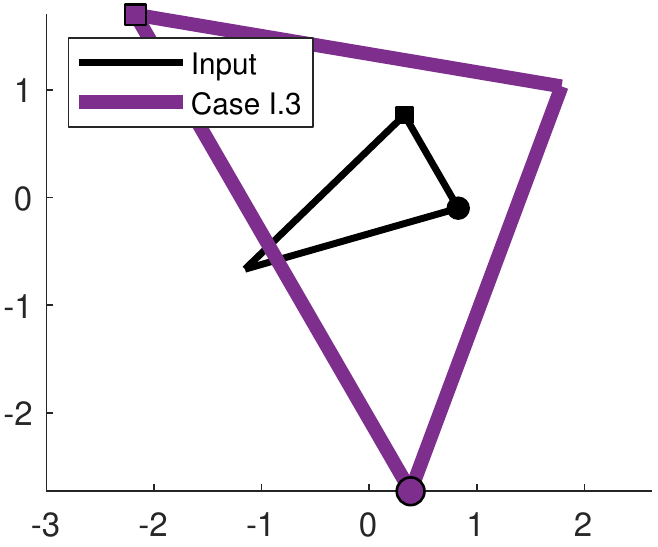}}} &
	\multirow{4}{*}{\parbox[c]{2.8cm}{\includegraphics[scale=0.36]{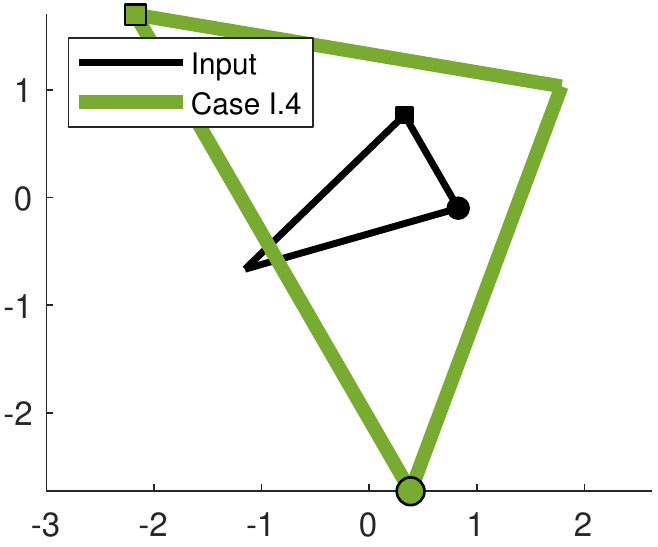}}} & \\ \cline{2-4}
    & 5.078 & -7.919 & \cellcolor{matgree} & & & 
    \\\cline{2-4}
    $A^*(\tilde{\mathbf{v}})=1.000$ & \multicolumn{3}{c|}{Case II} & & & \\\cline{2-4}
    $A_o = 0.500$ & 0.875  & 0.500 & \cellcolor{matblue} &  & & \\\hlineB{4} 
    \multirow{2}{*}{\vspace{-3mm}\shortstack[c]{Input: Positive \\ Oriented Triangle}} & Cost & $A^*(\mathbf{v})$ & \cellcolor{grayxgray} & 
	\multirow{4}{*}{\parbox[c]{2.8cm}{\centering\includegraphics[scale=0.36]{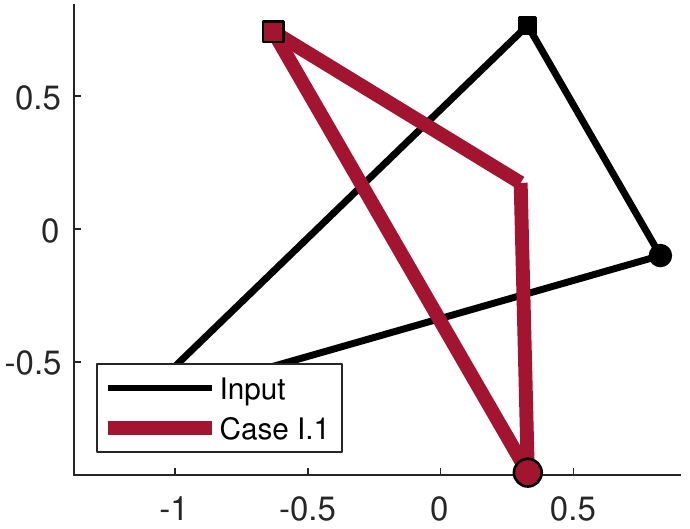}}} &
	\multirow{4}{*}{\parbox[c]{2.8cm}{\centering\includegraphics[scale=0.36]{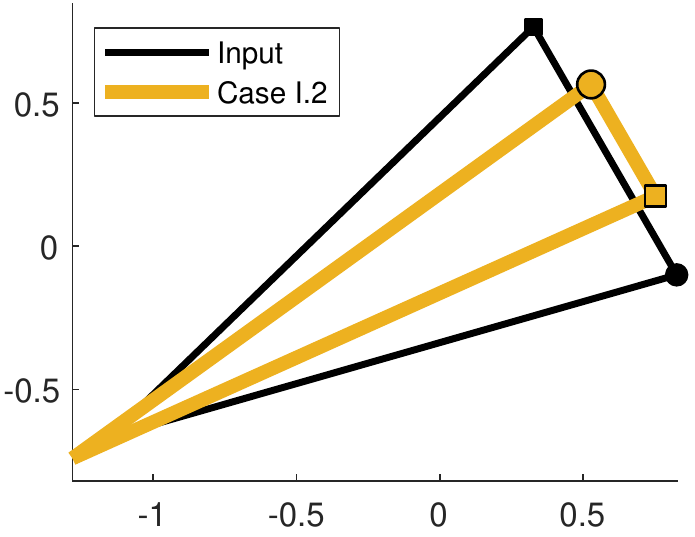}}} & \\ \cline{2-4} 
 	 & \multicolumn{3}{c|}{Case I} & & & \\ \cline{2-4}
    $s=-1$ & 2.158 & -0.500 & \cellcolor{matred} & & &\multirow{4}{*}{\parbox[c]{2.8cm}{\centering\includegraphics[scale=0.36]{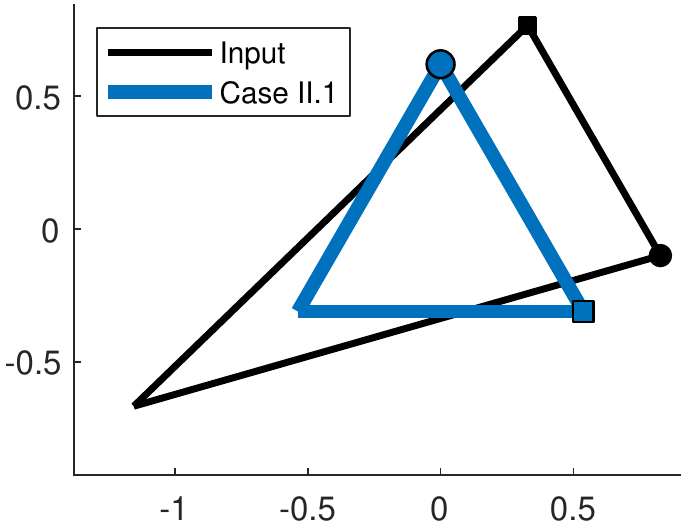}}} \\ \HHline{2pt}{|~*{3}{-}*{3}{~}|}
 	\multirow{3}{*}{$\tilde{\mathbf{v}} = \begin{bmatrix}  0.827 &  -0.100 \\ 0.327 & 0.766 \\ -1.155 & -0.667 \end{bmatrix}$} & \multicolumn{1}{V{4}c|}{1.041} & -0.500
 	& \multicolumn{1}{c V{4}}{\cellcolor{matyell}}
 	& & & \\ \HHline{2pt}{|~*{3}{-}*{3}{~}|}
    & 40.35 & 648.58 & \cellcolor{matpurp} & \multirow{4}{*}{\parbox[c]{2.8cm}{\centering\includegraphics[scale=0.36]{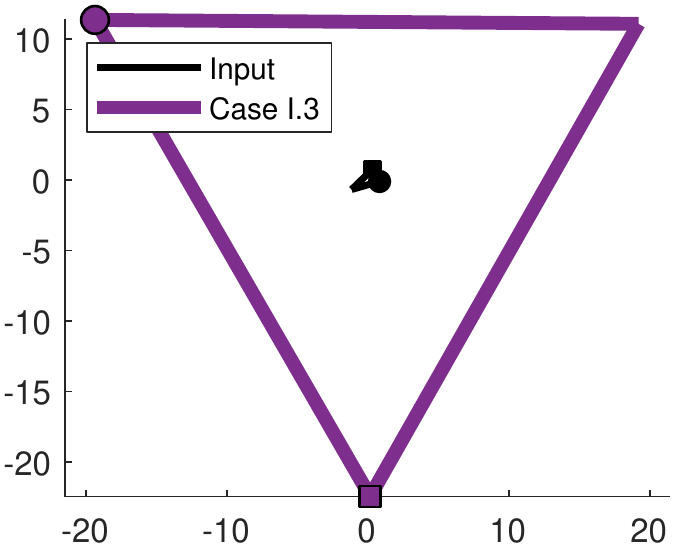}}} &  
	\multirow{4}{*}{\parbox[c]{2.8cm}{\centering\includegraphics[scale=0.36]{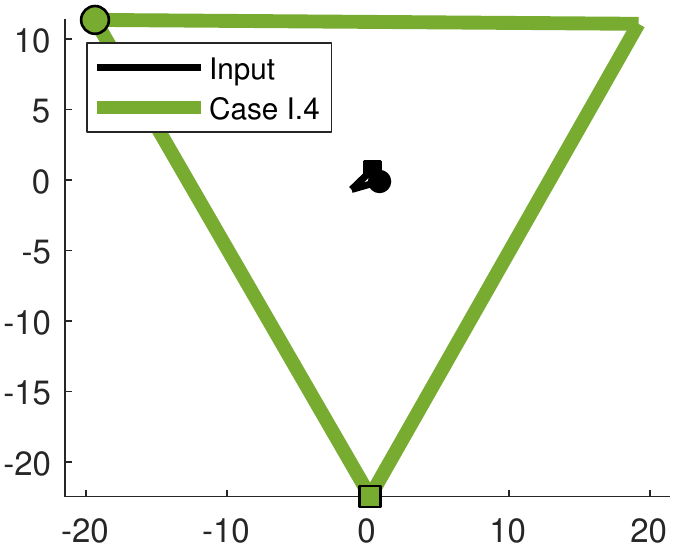}}} & \\ \cline{2-4}
     & 40.35 & 648.58 & \cellcolor{matgree} & & & 
     \\\cline{2-4}
    $A^*(\tilde{\mathbf{v}})=1.000$ & \multicolumn{3}{c|}{Case II} & & & \\\cline{2-4}
    $A_o = 0.500$ & 1.707 & -0.500 & \cellcolor{matblue} &  & & \\\hlineB{4} 
	\end{tabular}
	\caption{Numerical examples with single triangles. The left column shows the type of input, prescribed orientation $s$, input vertices $\tilde{\mathbf{v}}$, input area $A^*(\tilde{\mathbf{v}})$ and prescribed area $A_o$. The middle column shows the cost and area obtained for the triangles computed by our algebraic procedure. All four solutions from Case I and one solution from Case II are shown, assigned a colour for visual representation and the optimal solution is highlighted. The right column shows the computed triangles superimposed with the input triangle.}
	\label{tab:numexa1}
\end{table}}

\begingroup
\setlength{\tabcolsep}{3pt} 
{\renewcommand{\arraystretch}{1.4}
\begin{table}
    \footnotesize
    \centering
	\begin{tabular}{V{4}>{\centering\arraybackslash}m{3.8cm}|>{\centering\arraybackslash}m{1.1cm}|>{\centering\arraybackslash}m{1.1cm}|>{\centering\arraybackslash}m{0.25cm}|>{\centering\arraybackslash}m{2.4cm} >{\centering\arraybackslash}m{2.4cm} >{\centering\arraybackslash}m{2.4cm}V{4}}
	\hlineB{4} 
	Input & \multicolumn{3}{c|}{Output} & \multicolumn{3}{c?}{Generated Triangles}  \\\hlineB{4}
     \multirow{2}{*}{\vspace{-3mm}\shortstack[c]{Input: Colinear\\ Vertices}} & Cost & $A^*(\mathbf{v})$ & \cellcolor{grayxgray} & 
	\multirow{4}{*}{\parbox[c]{2.8cm}{\centering\includegraphics[scale=0.4]{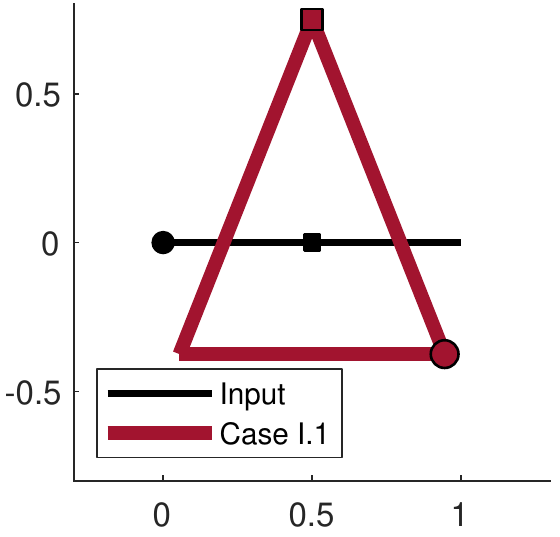}}} &
	\multirow{4}{*}{\parbox[c]{2.8cm}{\centering\includegraphics[scale=0.4]{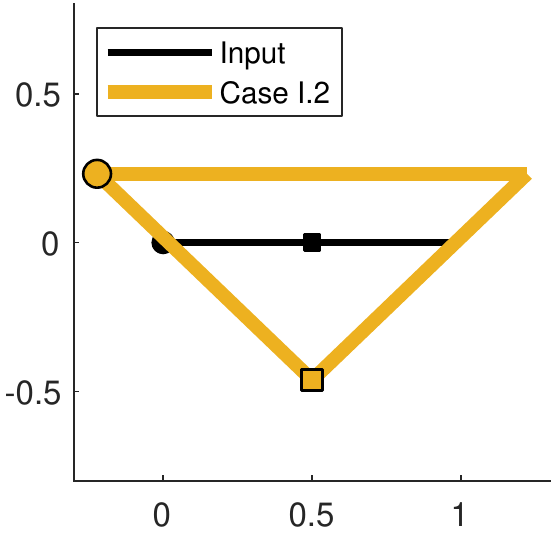}}} & \\ \cline{2-4} 
 	 & \multicolumn{3}{c|}{Case I} & & & \\ \cline{2-4}
 	 $s=1$  & 1.621 & 0.500 & \cellcolor{matred} & & &\multirow{4}{*}{\parbox[c]{2.8cm}{\centering\includegraphics[scale=0.4]{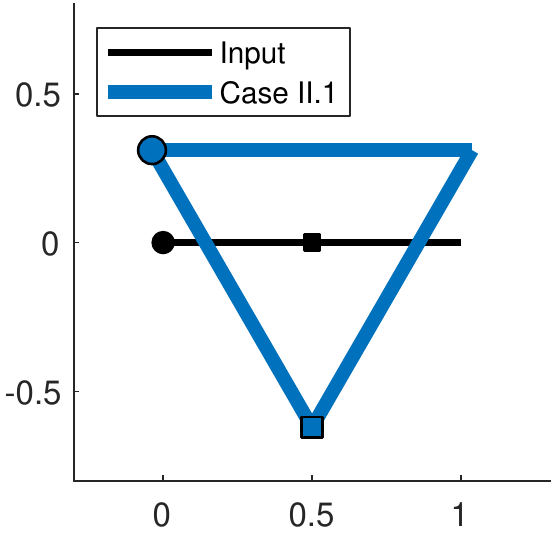}}} \\ \HHline{2pt}{|~*{3}{-}*{3}{~}|}
 	\multirow{3}{*}{$\tilde{\mathbf{v}} = \begin{bmatrix}  0.000 & 0.000 \\ 0.500 & 0.000 \\ 1.000 & 0.000 \end{bmatrix}$}
 	 & \multicolumn{1}{V{4}c|}{0.647} & 0.500
 	 & \multicolumn{1}{c V{4}}{\cellcolor{matyell}} & & & \\\HHline{2pt}{|~*{3}{-}*{3}{~}|}
     & 88.049 & -3318 & \cellcolor{matpurp} & \multirow{4}{*}{\parbox[c]{2.8cm}{\centering\includegraphics[scale=0.38]{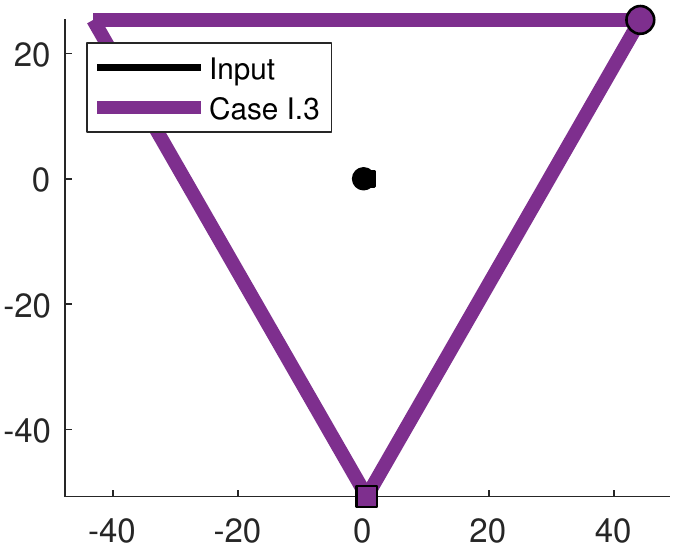}}} &
	\multirow{4}{*}{\parbox[c]{2.8cm}{\centering\includegraphics[scale=0.38]{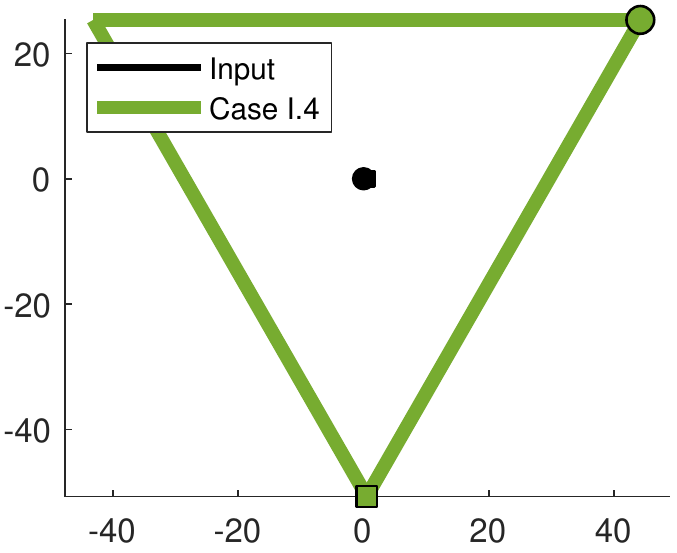}}} & \\ \cline{2-4}
     & 88.049 & -3318 & \cellcolor{matgree} & & &
     \\\cline{2-4}
    $A^*(\tilde{\mathbf{v}})=0.000$ & \multicolumn{3}{c|}{Case II} & & & \\\cline{2-4}
    $A_o = 0.500$ & 0.762 & 0.500 & \cellcolor{matblue} & & & \\\hlineB{4} 
    \multirow{2}{*}{\vspace{-3mm}\shortstack[c]{Input: Colocated\\ Vertices}} & Cost & $A^*(\mathbf{v})$ & \cellcolor{grayxgray} & 
	\multirow{4}{*}{\parbox[c]{2.8cm}{\includegraphics[scale=0.4]{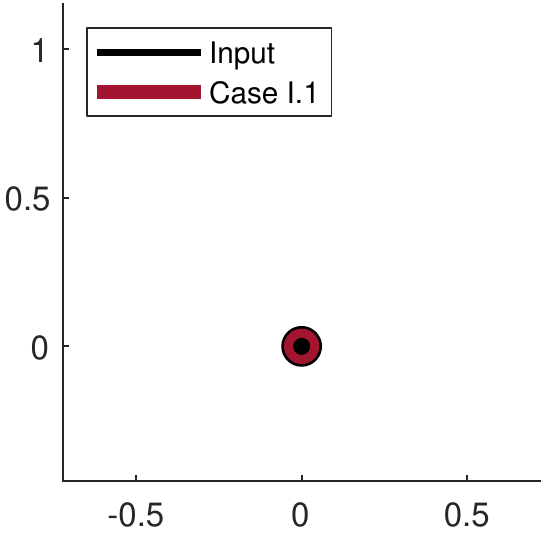}}} &
	\multirow{4}{*}{\parbox[c]{2.8cm}{\includegraphics[scale=0.4]{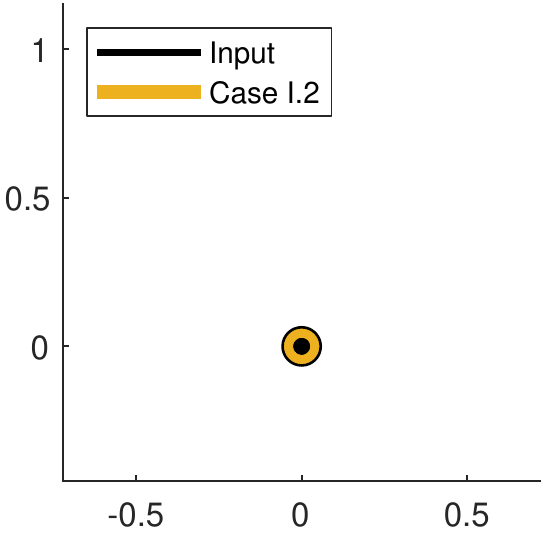}}} & \\ \cline{2-4} 
 	 & \multicolumn{3}{c|}{Case I} & & & \\ \cline{2-4}
 	 $s=1$ & 0.000 & 0.000 & \cellcolor{matred} & & &\multirow{4}{*}{\parbox[c]{2.8cm}{\includegraphics[scale=0.4]{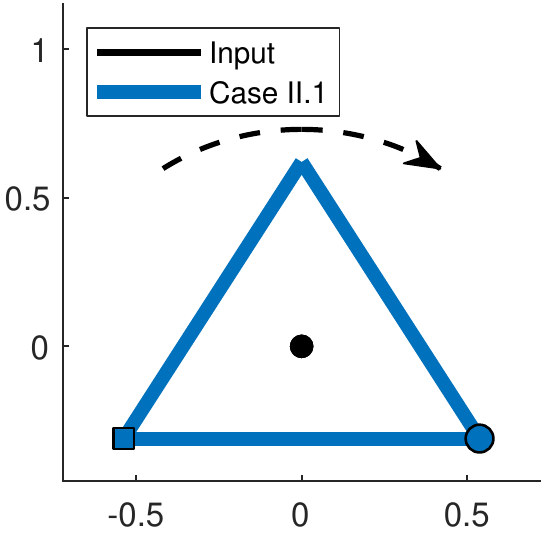}}} \\ \cline{2-4}
 	\multirow{3}{*}{$\tilde{\mathbf{v}} = \begin{bmatrix}  0.000 & 0.000 \\ 0.000 & 0.000 \\ 0.000 & 0.000  \end{bmatrix}$} 
 	& 0.000 & 0.000 & \cellcolor{matyell} & & & \\ \cline{2-4}
    & 0.000 & 0.000 & \cellcolor{matpurp} & \multirow{4}{*}{\parbox[c]{2.8cm}{\includegraphics[scale=0.4]{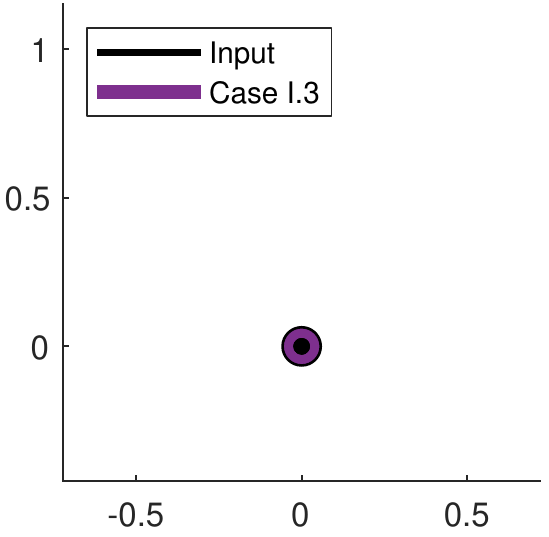}}} &
	\multirow{4}{*}{\parbox[c]{2.8cm}{\includegraphics[scale=0.4]{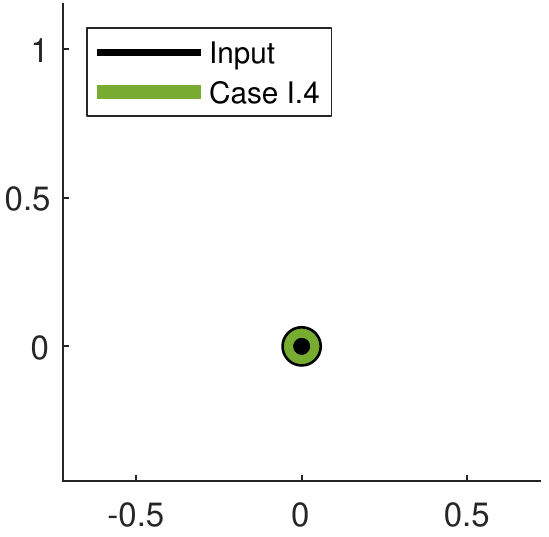}}} & \\ \cline{2-4}
     & 0.000 & 0.000 & \cellcolor{matgree} & & &
     \\\cline{2-4}
    $A^*(\tilde{\mathbf{v}})=0.000$ & \multicolumn{3}{c|}{Case II} & & & \\
    \HHline{2pt}{|~*{3}{-}*{3}{~}|}
    $A_o = 0.500$ & \multicolumn{1}{V{4}c|}{1.075} & 0.500
 	 & \multicolumn{1}{c V{4}}{\cellcolor{matblue}} &  & & \\\hlineB{4}
 	\multirow{2}{*}{\vspace{-3mm}\shortstack[c]{Input: 
 	Positive Oriented  \\ Equilateral Triangle}} & Cost & $A^*(\mathbf{v})$ & \cellcolor{grayxgray} & 
	\multirow{4}{*}{\parbox[c]{2.8cm}{\centering\includegraphics[scale=0.4]{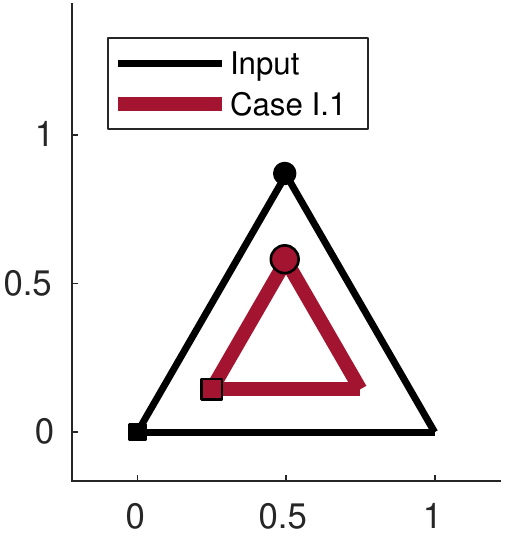}}} &
	\multirow{4}{*}{\parbox[c]{2.8cm}{\centering\includegraphics[scale=0.4]{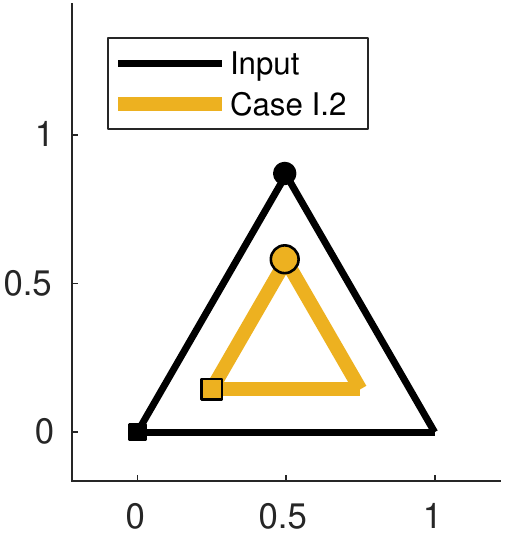}}} & \\ \cline{2-4} 
 	 & \multicolumn{3}{c|}{Case I} & & & \\ \cline{2-4}
 	 $s=-1$ & 0.500 & 0.108 & \cellcolor{matred} & & &\multirow{4}{*}{\parbox[c]{2.8cm}{\centering\includegraphics[scale=0.4]{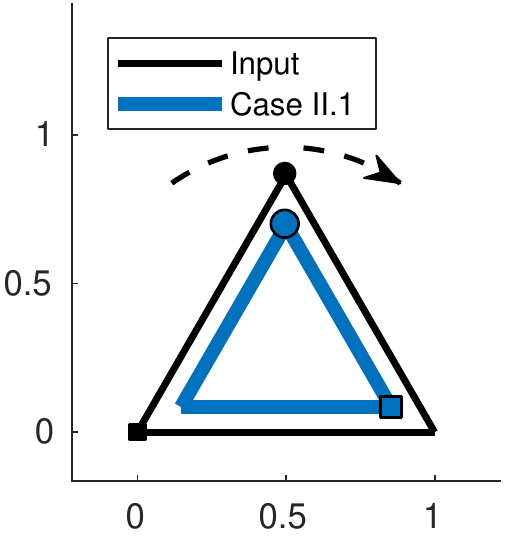}}} \\ \cline{2-4} 
 	\multirow{3}{*}{$\tilde{\mathbf{v}} = \begin{bmatrix}  0.000 & 0.000 \\ 1.000 & 0.000 \\ 0.500 & 0.866 \end{bmatrix}$}
 	& 0.500 & 0.108
 	& \cellcolor{matyell}
 	& & & \\ \cline{2-4}
    & \num[exponent-product = \cdot]{3.60e+15} & \num{5.62e+30} & \cellcolor{matpurp} & \multirow{4}{*}{\parbox[c]{2.8cm}{\centering\includegraphics[scale=0.4]{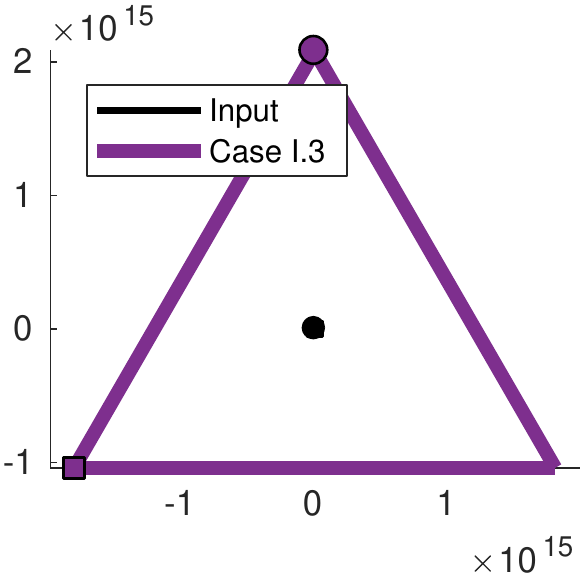}}} & 
	\multirow{4}{*}{\parbox[c]{2.8cm}{\centering\includegraphics[scale=0.4]{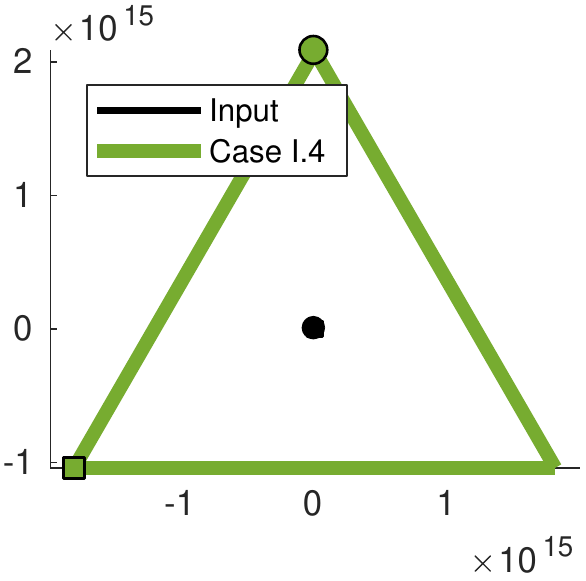}}} & \\ \cline{2-4}
     & \num{3.60e+15} & \num{5.62e+30} & \cellcolor{matgree} & & & 
     \\\cline{2-4}
    $A^*(\tilde{\mathbf{v}})=0.433$ & \multicolumn{3}{c|}{Case II} & & & \\\HHline{2pt}{|~*{3}{-}*{3}{~}|}
    $A_o = 0.216$ & \multicolumn{1}{V{4}c|}{1.225} & -0.216
 	& \multicolumn{1}{c V{4}}{\cellcolor{matblue}} 
    & & & \\\hlineB{4} 
    \end{tabular}
	\caption{Numerical examples with special triangles.
	The left column shows the type of input, prescribed orientation $s$, initial vertices $\tilde{\mathbf{v}}$, input area $A^*(\tilde{\mathbf{v}})$ and prescribed area $A_o$. The middle column shows the cost and area obtained for the triangles computed by our algebraic procedure. All four solutions from Case I and one solution from Case II are shown and assigned a colour for visual representation. The right column shows the computed triangles superimposed with the input triangle. In the case of colocated or equilateral input vertices, the generated equilateral triangle can be rotated arbitrarily without changing the cost.}
	\label{tab:numexa3}
\end{table}}

\section{Area-based 2D Mesh Editing}
We use our method in triangular 2D mesh editing. The implementation is similar to PBD~\citep{muller_position_2007} but instead of linearising the area constraint, we perform an optimal projection for each triangle in the mesh. The $N_p$ mesh vertices are in $\mathbf{P} \in \mathbb{R}^{N_p\times2}$ and the $N_t$ triangles in $\mathbf{M} \in \mathbb{R}^{N_t\times3}$ with prescribed areas $\mathbf{A}_o \in \mathbb{R}^{N_t}$ and prescribed orientation $\sign(A^*(\tilde{\mathbf{v}}))$. The implementation is given in Algorithm \ref{alg:meshedit}. 
\begin{algorithm}
    \caption{Prescribed Area Preservation 2D Mesh PBD}
    \label{alg:meshedit}
    \begin{algorithmic}[1]
    \Require $\mathbf{P}$ - mesh vertices, $\mathbf{M}$ - triangles' indices, $\mathbf{A}_o$ - prescribed areas, $T_c$ - displacement threshold, $E$ - area error tolerance
    \Ensure $\mathbf{P}$ - edited mesh points
    \State $C \gets \infty$ 
    \While{$C \geq T_c$} \Comment{Iterate until convergence}
    \State $\tilde{\mathbf{P}} \gets \mathbf{P}$    \Comment{Copy the mesh vertices}
    \For{$t \gets 1,\dots,N_t$}
    \State $p \gets \mathbf{M}(t,:)$ \Comment{Indices of the triangle}
    \State $\tilde{\mathbf{v}} \gets \mathbf{P}(p,:)$ \Comment{Coordinates of the
    triangle}
    \State $A_o \gets \mathbf{A}_o(t)$ \Comment{Prescribed area of the triangle}
    \State $s \gets \sign(A^*(\tilde{\mathbf{v}}))$ \Comment{Orientation of the triangle}
    \State $\mathbf{v}_o \gets$ \Call{OTTPAO}{$\tilde{\mathbf{v}},A_o,s, E$}
    \Comment{Optimal projection}
    \State $\mathbf{P}(p,:) \gets \mathbf{v}_o$ \Comment{Update mesh points}
    \EndFor
    \State $C \gets \frac{1}{N_t}\sum_{i=1}^{N_t}\|\tilde{\mathbf{P}}(i,:)-\mathbf{P}(i,:)\|$ 
    \Comment{Average displacement}
    \EndWhile
    \end{algorithmic}
\end{algorithm}

\subsection{Shape Dataset}
For our dataset, we used Distmesh~\citep{persson_simple_2004} to create a set of synthetic triangular meshes. As shown in Figure~\ref{fig:data}, the shapes ranged from simple convex shapes to nonconvex shapes with different levels of complexity. The dataset was divided into two subsets: one subset of 8 coarse meshes composed approximately of 100 triangles and one subset of 8 fine meshes composed approximately 1000 triangles. The meshes were designed so the distances between connected vertices were approximately the same. We use the areas of each of the triangles as prescribed areas $A_o$.

\begin{figure}
  \begin{subfigure}[b]{0.24\textwidth}
    \includegraphics[width=\textwidth]{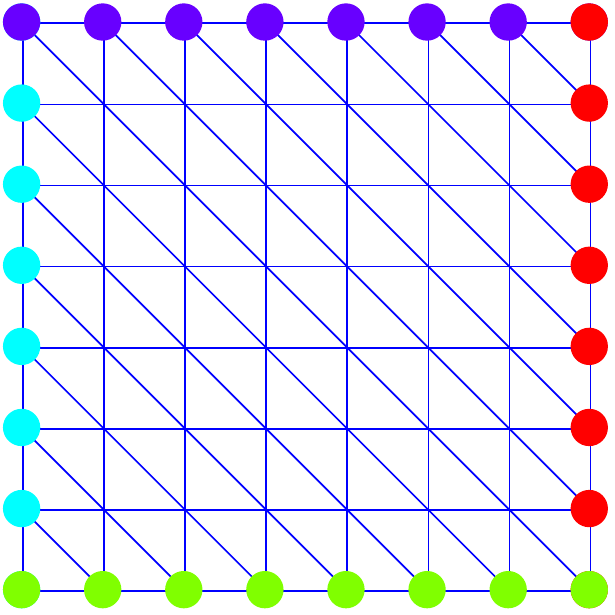}
  \end{subfigure}
  \begin{subfigure}[b]{0.24\textwidth}
    \includegraphics[width=\textwidth]{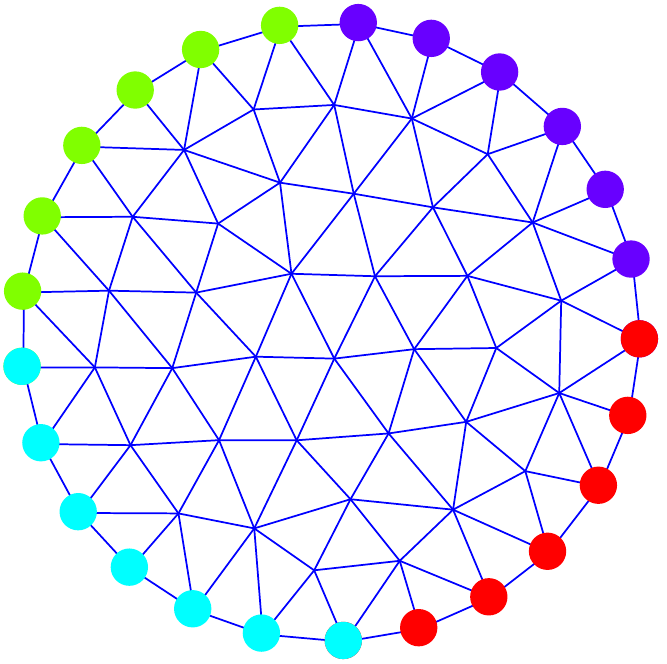}
  \end{subfigure}
  \begin{subfigure}[b]{0.24\textwidth}
    \includegraphics[width=\textwidth]{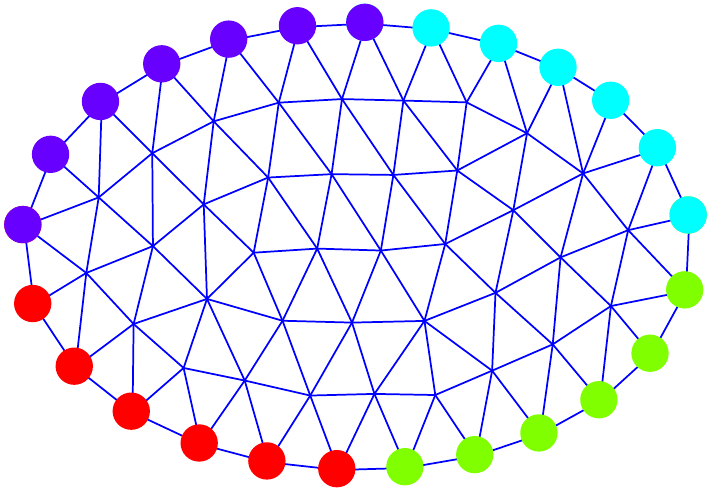}
    \vphantom{\includegraphics[width=0.24\textwidth,valign=c]{images/square}}
  \end{subfigure}
  \begin{subfigure}[b]{0.24\textwidth}
    \includegraphics[width=\textwidth]{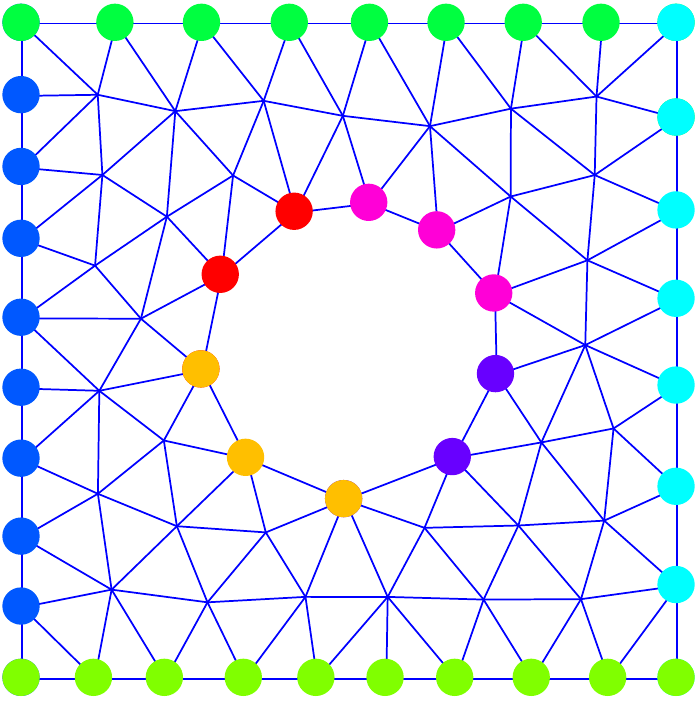}
  \end{subfigure} \\
  \begin{subfigure}[b]{0.24\textwidth}
    \includegraphics[width=\textwidth]{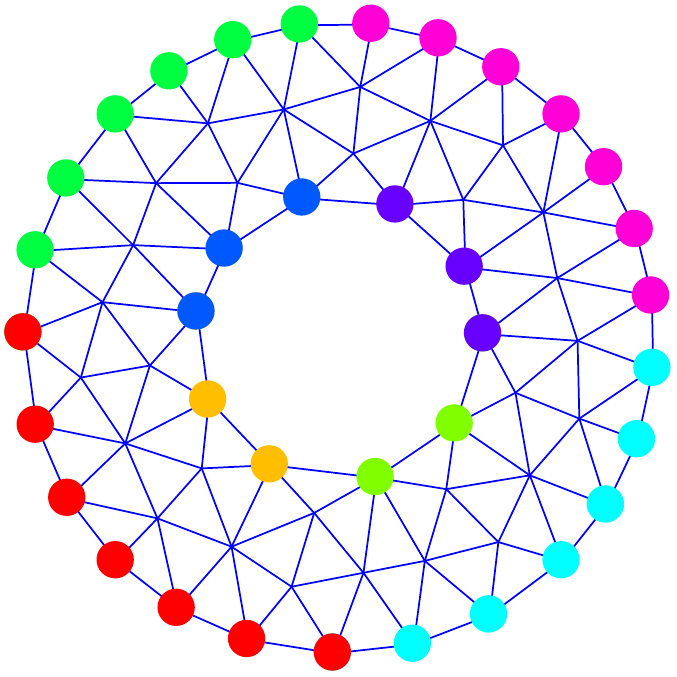}
  \end{subfigure}
  \begin{subfigure}[b]{0.24\textwidth}
    \includegraphics[width=\textwidth]{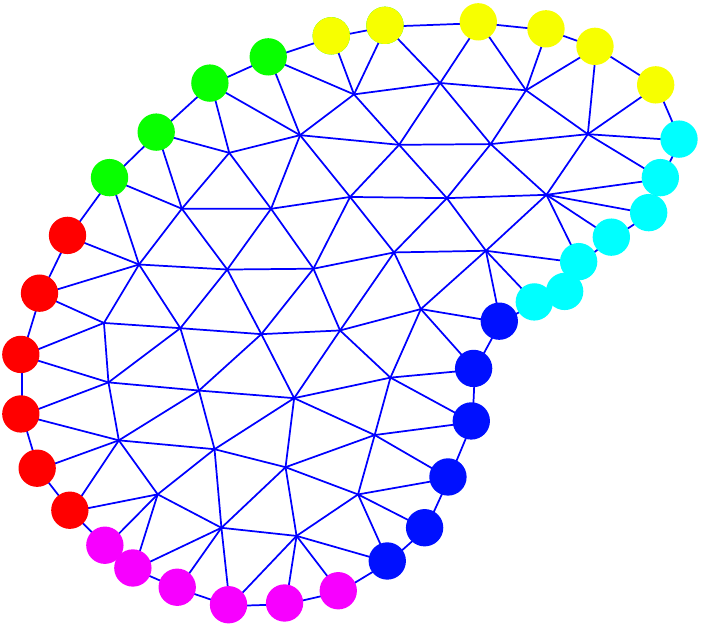}
  \end{subfigure}
  \begin{subfigure}[b]{0.24\textwidth}
    \includegraphics[width=\textwidth]{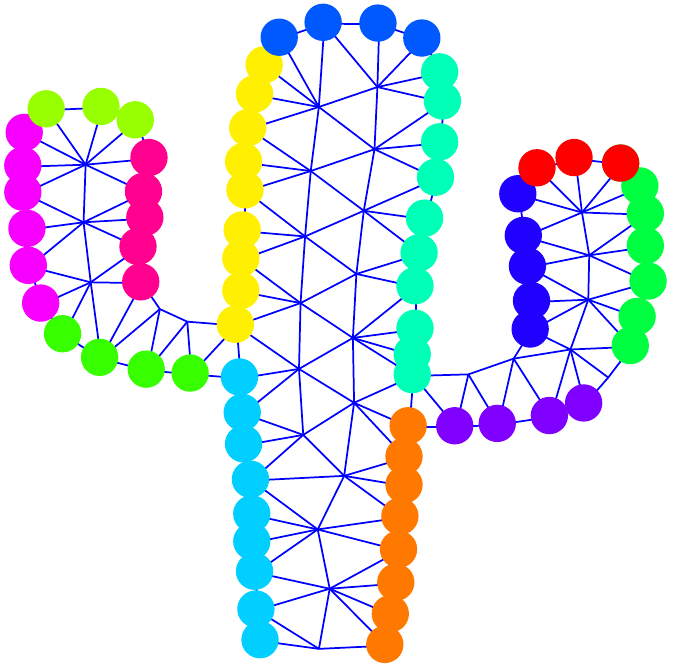}
  \end{subfigure}
  \begin{subfigure}[b]{0.24\textwidth}
    \includegraphics[width=\textwidth]{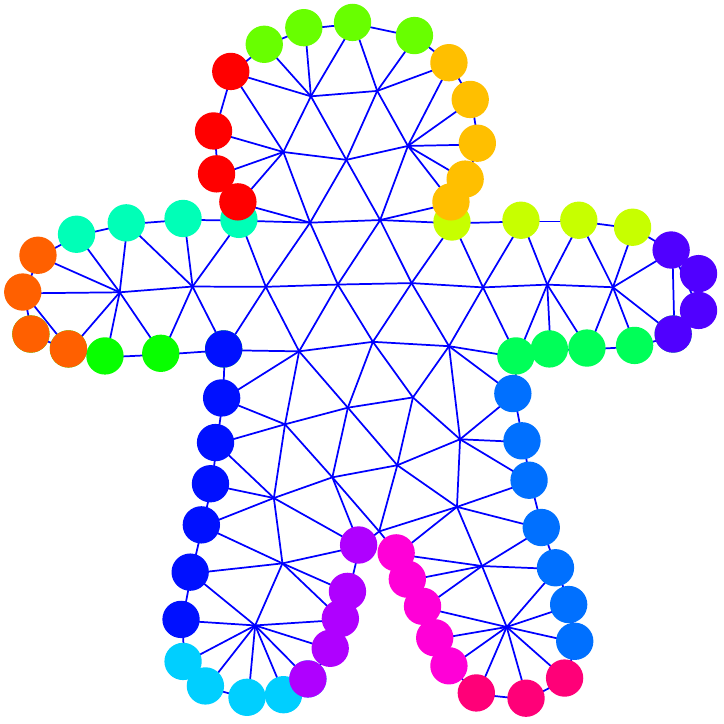}
  \end{subfigure} \\
  \caption{Shapes from the coarse subset of our synthetic polygonal mesh dataset. The vertices located at the edge are divided in sets of connected vertices
  represented with the same colour.}
  \label{fig:data}
\end{figure}

\subsection{Generating Deformation Constraints}
For our experiments we require to calculate the magnitude of the initial deformation. Since we deal with nonconvex shapes of different levels of complexity, size and orientation, we normalise this magnitude with respect to the maximum distance between two vertices in the direction of maximum variance. We treat the meshes as 2D point clouds and calculate the 95\% confidence ellipse that surrounds the vertices~\citep{friendly_elliptical_2013}. We take the maximum distance $D$ as two times the length of the semi-major axis of the ellipse. We divide the vertices located at the edge of the polygonal mesh in sets of connected vertices that represent a line or curve, as shown in Figure~\ref{fig:data}. Then, we apply an initial deformation by translating a given set of edge vertices in a random direction that does not cause self-collision. The magnitude of this translation is a fraction of $D$. 

\subsection{Methodology}
We test the effectiveness of PBD-opt by applying an initial deformation to a synthetic mesh and measuring the number iterations it takes to converge compared to PBD-lin. Convergence is achieved when the average displacement of the mesh’s vertices is lower than some displacement threshold $T_c$. 
We test both methods with the coarse and fine mesh datasets. For each dataset, we perform 200 random deformations per mesh (1600 deformations in total). We applied initial deformations to the meshes of $5\%$, $10\%$ and $20\%$ of the maximum inter-vertex distance $D$. We measure the convergence speed as the number of iterations it takes to reach 3 different displacement thresholds $T_c$ at $5\%$, $2.5\%$ and $1\%$ of $D$. A run stops when a method's cost reaches the lowest threshold ($T_c=1\%$) or after it reaches a stopping time (10,000 iterations\footnote{The stopping time choice was arbitrary. However considering that the  convergence speed  of PBD-opt was lower than 1000 iterations, the stopping time is sufficiently high for our experiments}).

We illustrate our methodology with an example presented in figure~\ref{fig:convgraph}. The input is a circle shaped coarse mesh with an initial random deformation of $10\%$ of maximum inter-vertex distance $D$. As can be seen in figure~\ref{fig:convgraph}a, the initial displacement cost of PBD-lin is lower than PBD-opt, however after some iterations the cost of PBD-opt becomes lower while the cost of PBD-lin takes many more iterations to converge. 
In figure~\ref{fig:convgraph}b, we show the evolution of the triangle area preservation constraint by comparing the difference of between the mesh triangle areas and prescribed areas. We observe that, by the time PBD-lin reaches convergence, its area difference is larger compared to PBD-opt. 

\begin{figure}
    \centering
    \begin{subfigure}[b]{0.49\textwidth}
    \includegraphics[width=\textwidth]{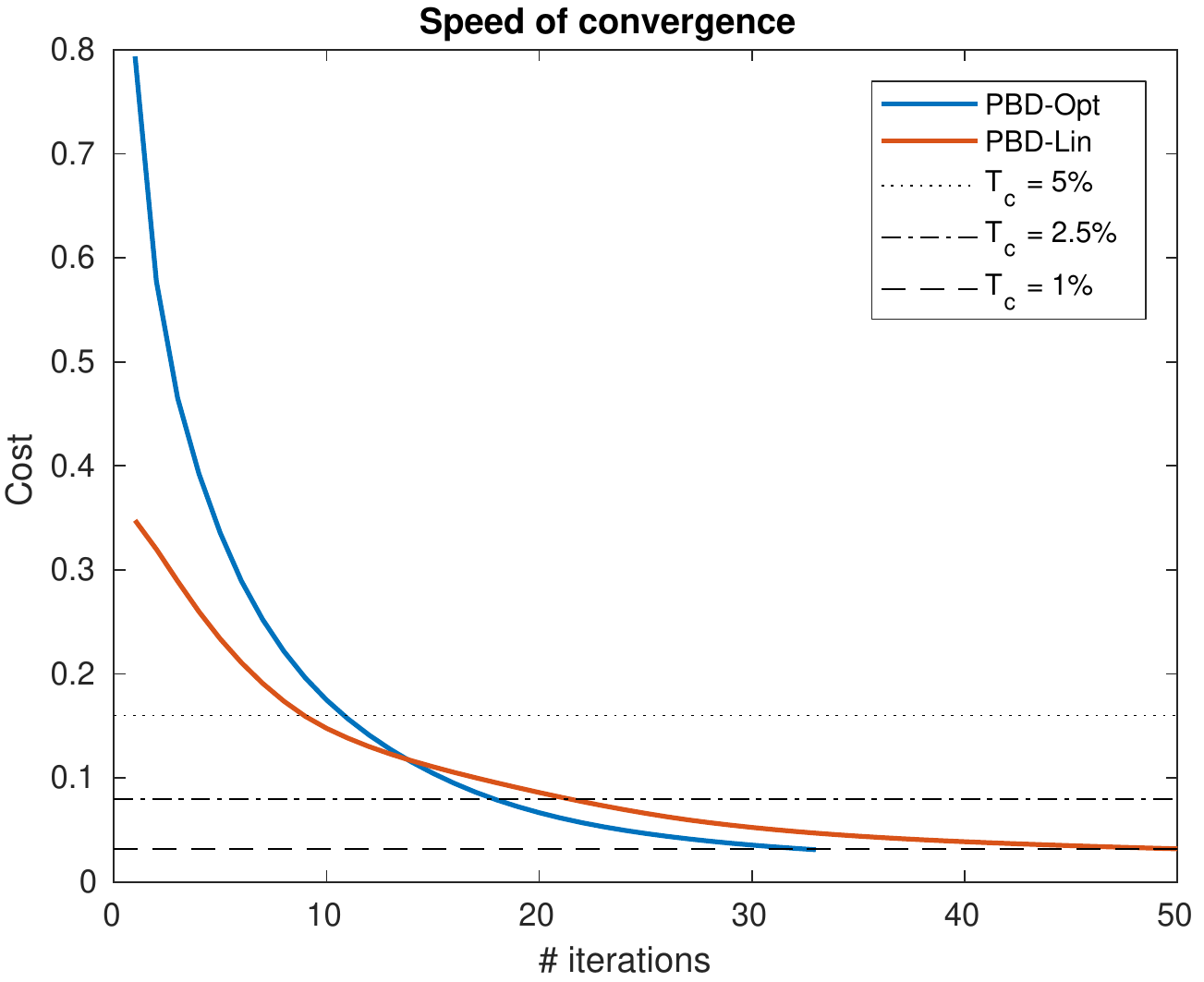}
    \caption{Evolution of cost}
    \end{subfigure}
    \begin{subfigure}[b]{0.49\textwidth}
    \includegraphics[width=\textwidth]{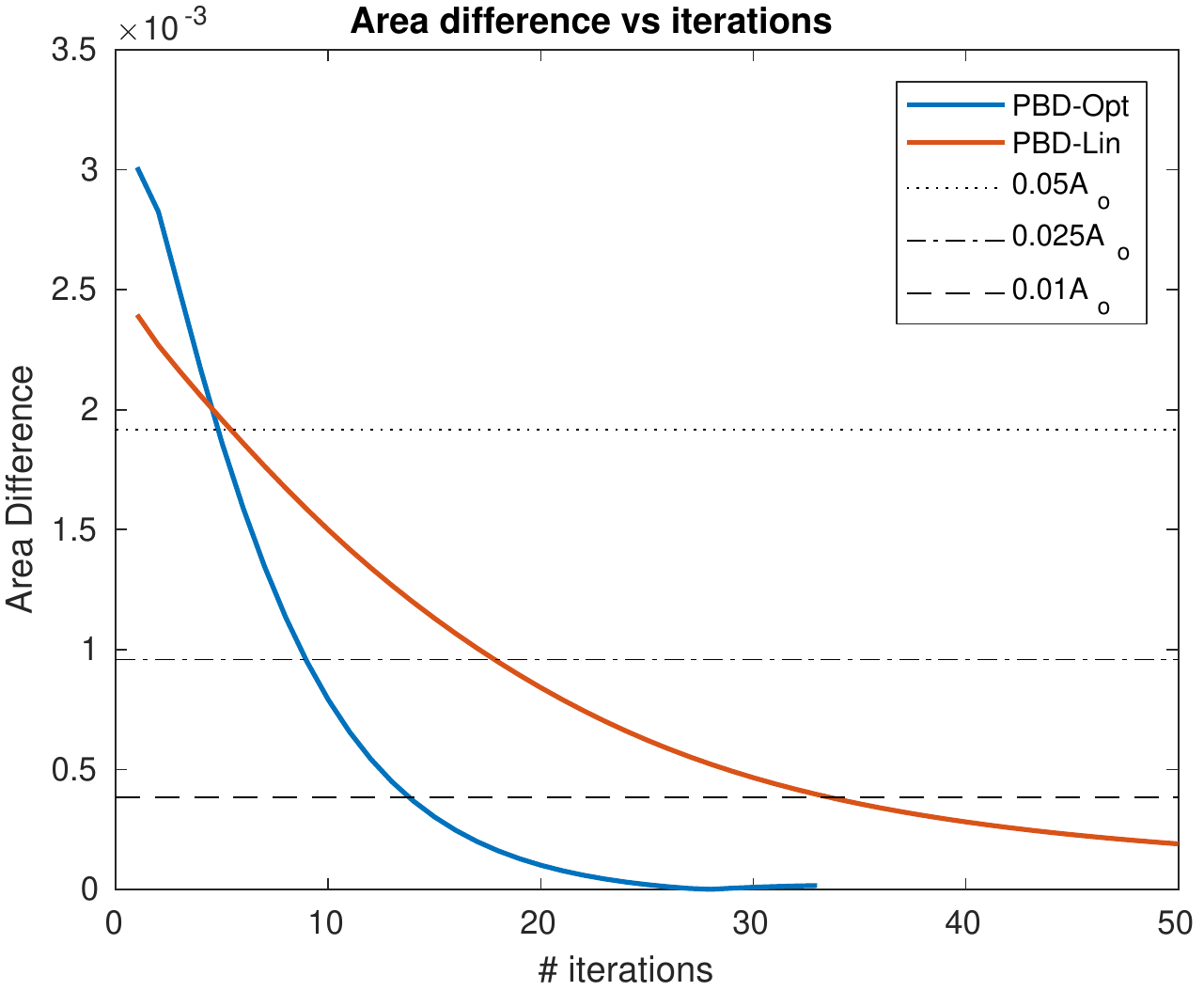}
    \caption{Evolution of area difference}
    \end{subfigure}
    \caption{Displacement cost and area difference comparison of mesh-editing for both PBD-Lin (red) and PBD-opt (blue) across the iterations. (a) Displacement thresholds are used to quantify the evolution of convergence speed (dotted and dashed black lines). (b) Area thresholds are used to quantify the evolution of the constraint convergence speed.}
    \label{fig:convgraph}
\end{figure}

\subsection{Results} 
Results can be found in figures~\ref{fig:datacoarse} and \ref{fig:datafine}. In the left column of each figure we use box plots to compare the median and variability of convergence speed of both methods according to their deformation and displacement threshold. Due to the large number of outliers obtained, especially with PBD-lin, we decided not to include them in the box plots but rather to represent them in stacked bar graphs in the right column of each figure. 

For the coarse database we observe that the median convergence time for PBD-opt is higher compared to PBD-lin for a threshold of $5\%$ and relatively similar or lower for $2.5\%$ and $1\%$. However, since all the box plot pairs overlap each other, we cannot conclude with $95\%$ confidence, that the medians do differ. However, we can claim that the results of PBD-opt are more stable since they have either similar or smaller variances compared to the results of PBD-lin. These differences are further exacerbated when evaluating the fine meshes data-set where the variance of PBD-lin is many times higher compared to the variance in PBD-opt. 

For the outlier analysis we must make a distinction between two types of outliers. The first type are slow convergence (SC) outliers, which surpasses the upper limit of the box plot but did not reach the stopping time. The second type are very slow convergence (VSC) outliers, which reach the stopping time. For PBD-opt, at most $2\%$ of the runs were SC outliers and $0\%$ were VSC outliers. However for PBD-lin, in the coarse meshes dataset, $10\%$ of the runs were SC outliers. In the fine meshes dataset, around $34\%$ of the runs were VSC outliers. This means that PBD-lin has a higher risk of getting stuck in iterations whose convergence time would be way higher compared to their median convergence speed. PBD-opt, on the other hand, provides more stable results with significantly fewer outliers.

\begin{figure}
\centering
  \begin{subfigure}[b]{0.37\textwidth}
    \includegraphics[width=\textwidth]{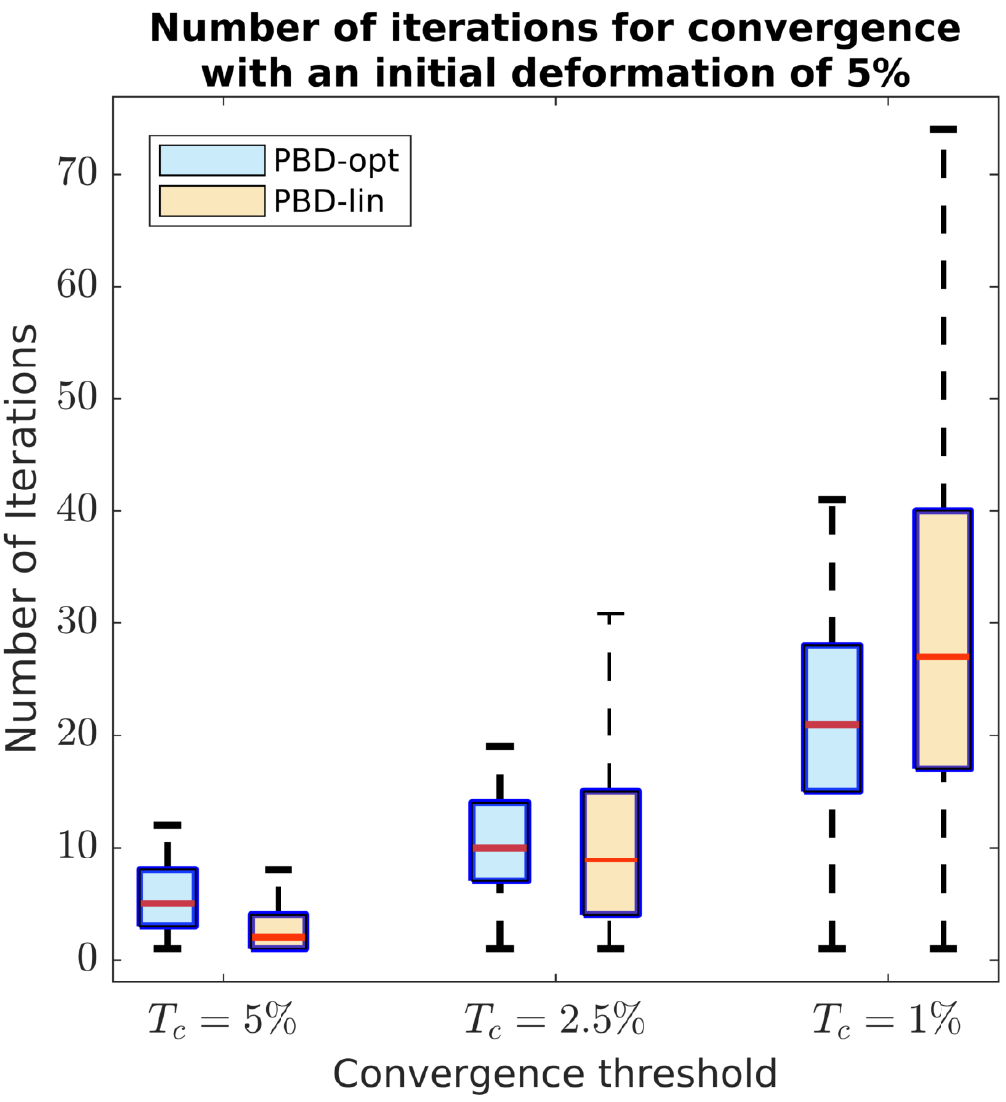}
  \end{subfigure} \quad
  \begin{subfigure}[b]{0.37\textwidth}
    \includegraphics[width=\textwidth]{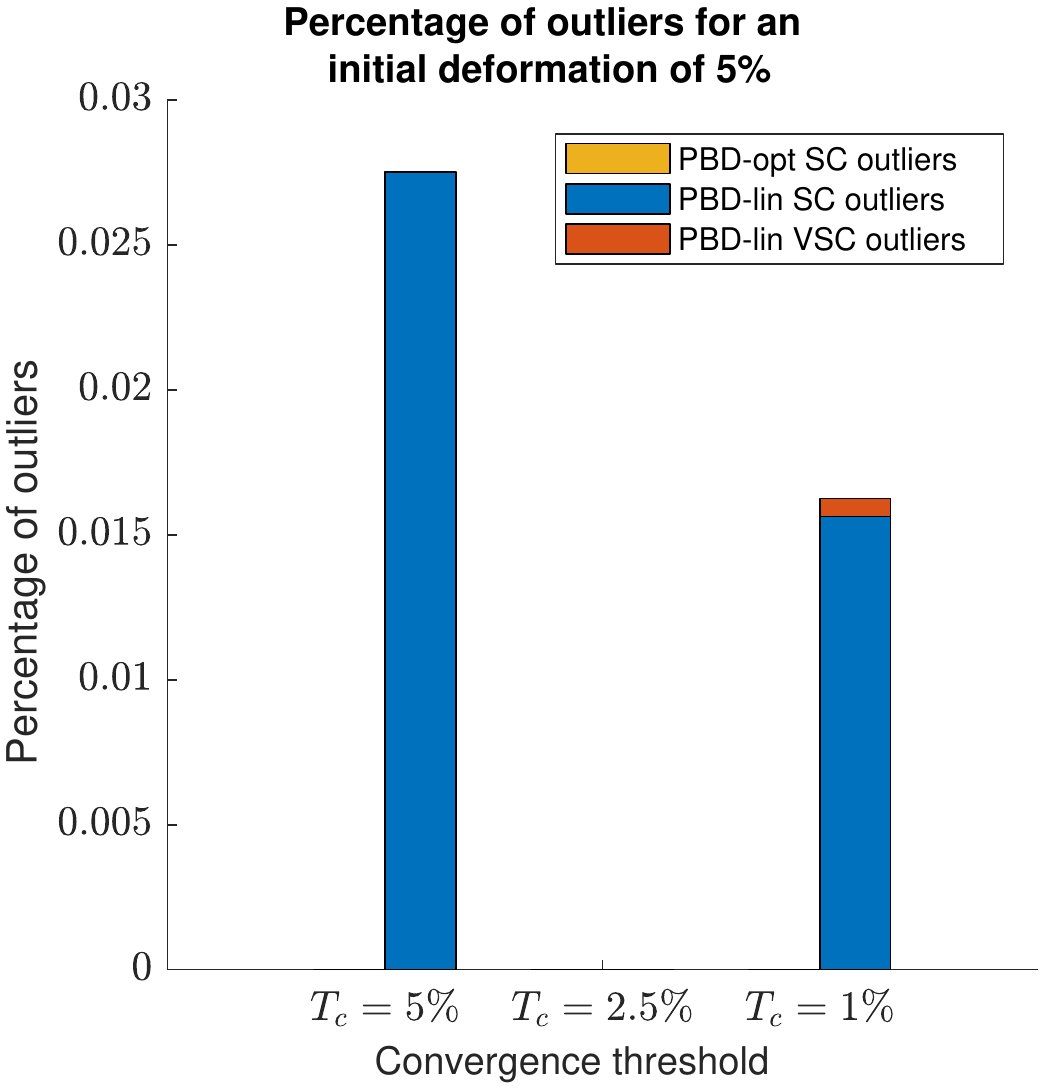}
  \end{subfigure} \\
  \begin{subfigure}[b]{0.37\textwidth}
    \includegraphics[width=\textwidth]{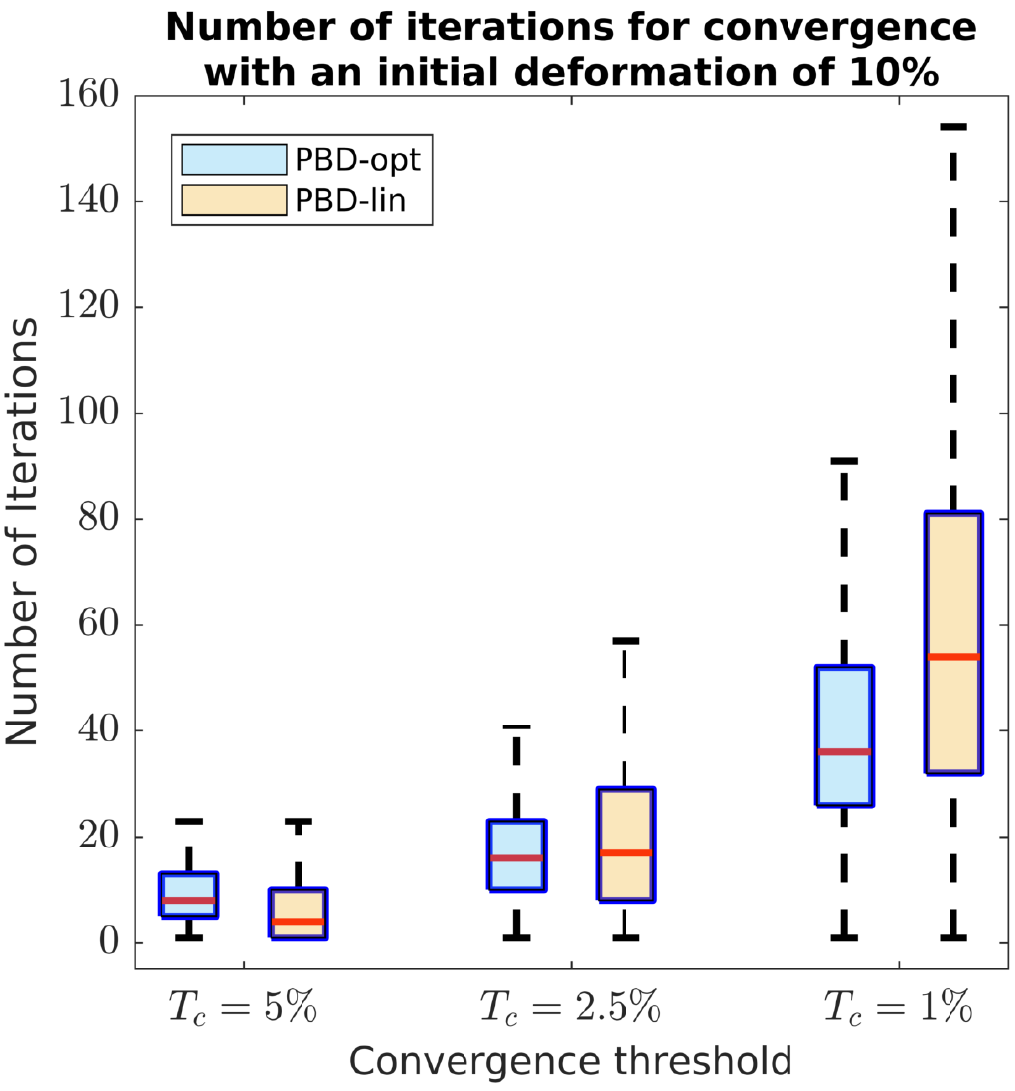}
  \end{subfigure} \quad
  \begin{subfigure}[b]{0.37\textwidth}
    \includegraphics[width=\textwidth]{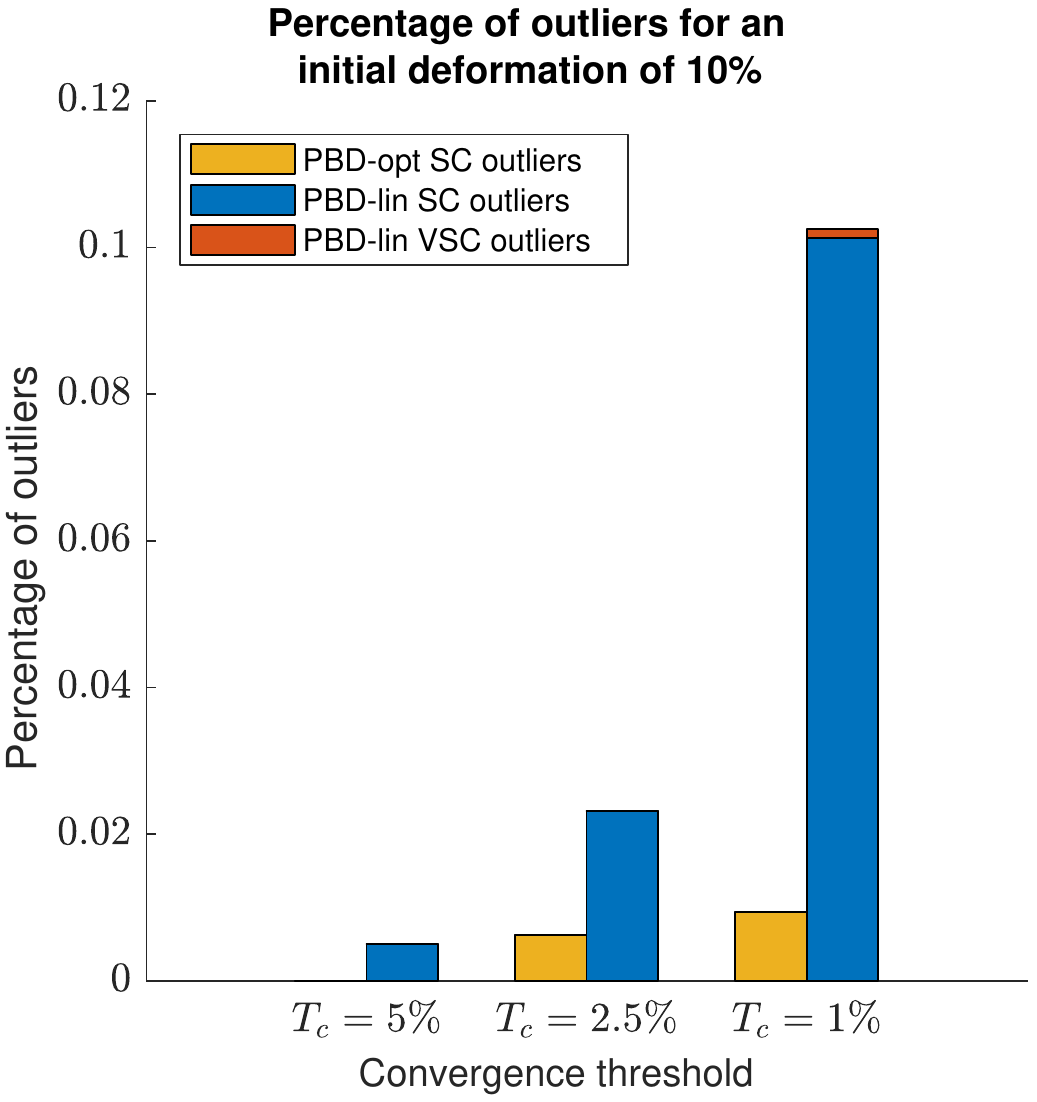}
  \end{subfigure} \\
  \begin{subfigure}[b]{0.37\textwidth}
    \includegraphics[width=\textwidth]{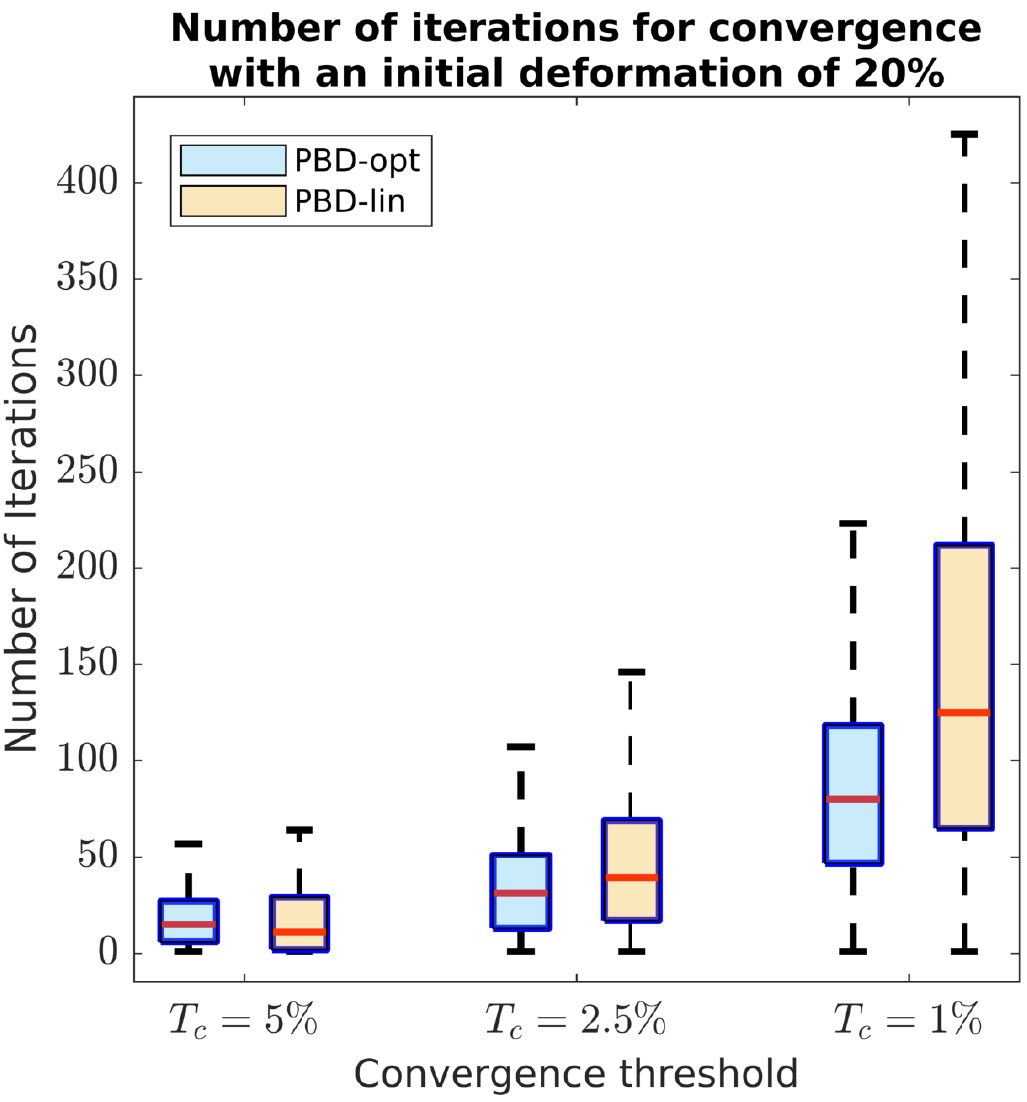}
  \end{subfigure} \quad
  \begin{subfigure}[b]{0.37\textwidth}
    \includegraphics[width=\textwidth]{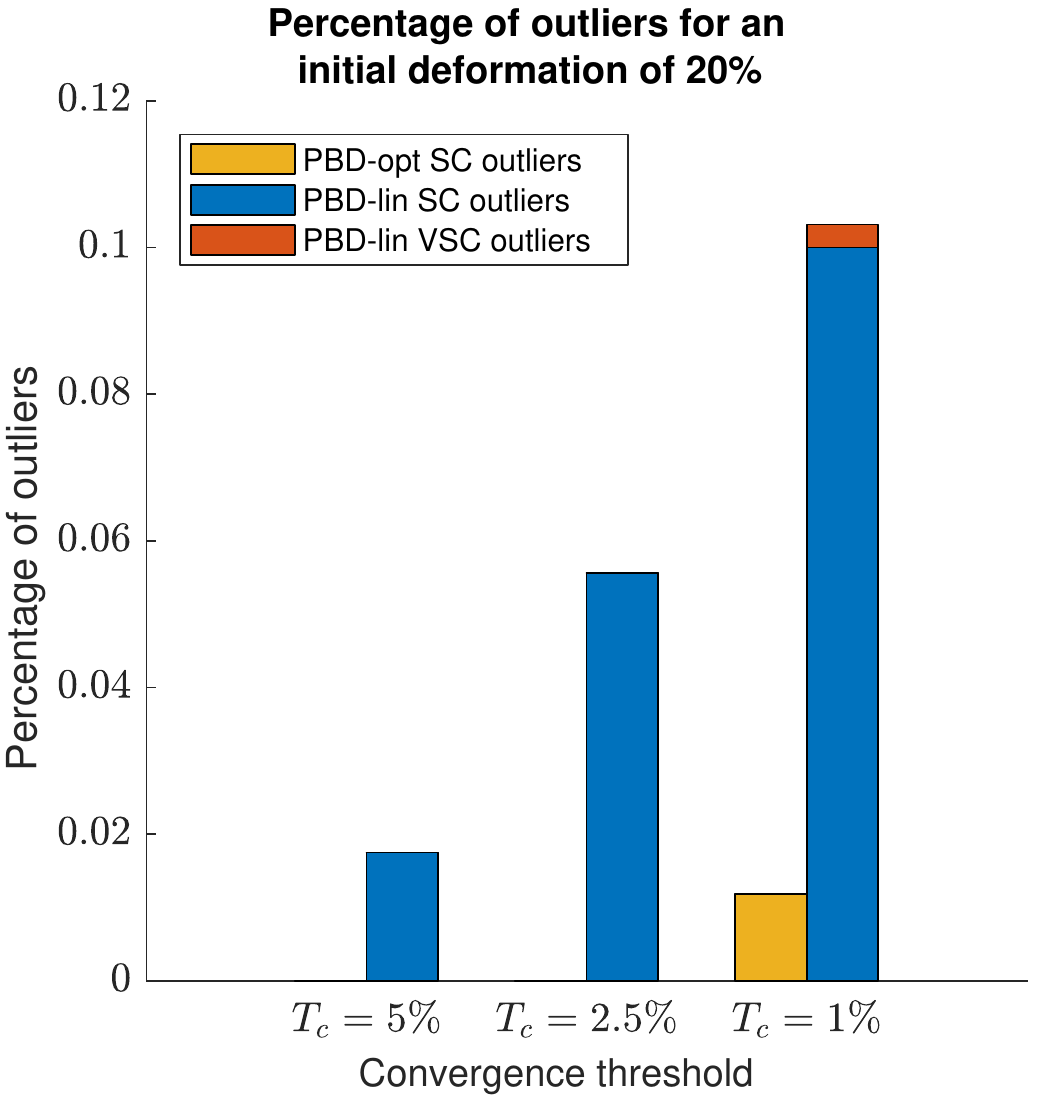}
  \end{subfigure} \\
  \caption{Convergence speed results for coarse 2D meshes. The left column shows the statistics for the number of iterations to reach conversion (omitting outliers). The right column shows the proportion of slow convergence (SC) and very slow convergence (VSC) outliers per method.}
  \label{fig:datacoarse}
\end{figure}

\begin{figure}
\centering
  \begin{subfigure}[b]{0.37\textwidth}
    \includegraphics[width=\textwidth]{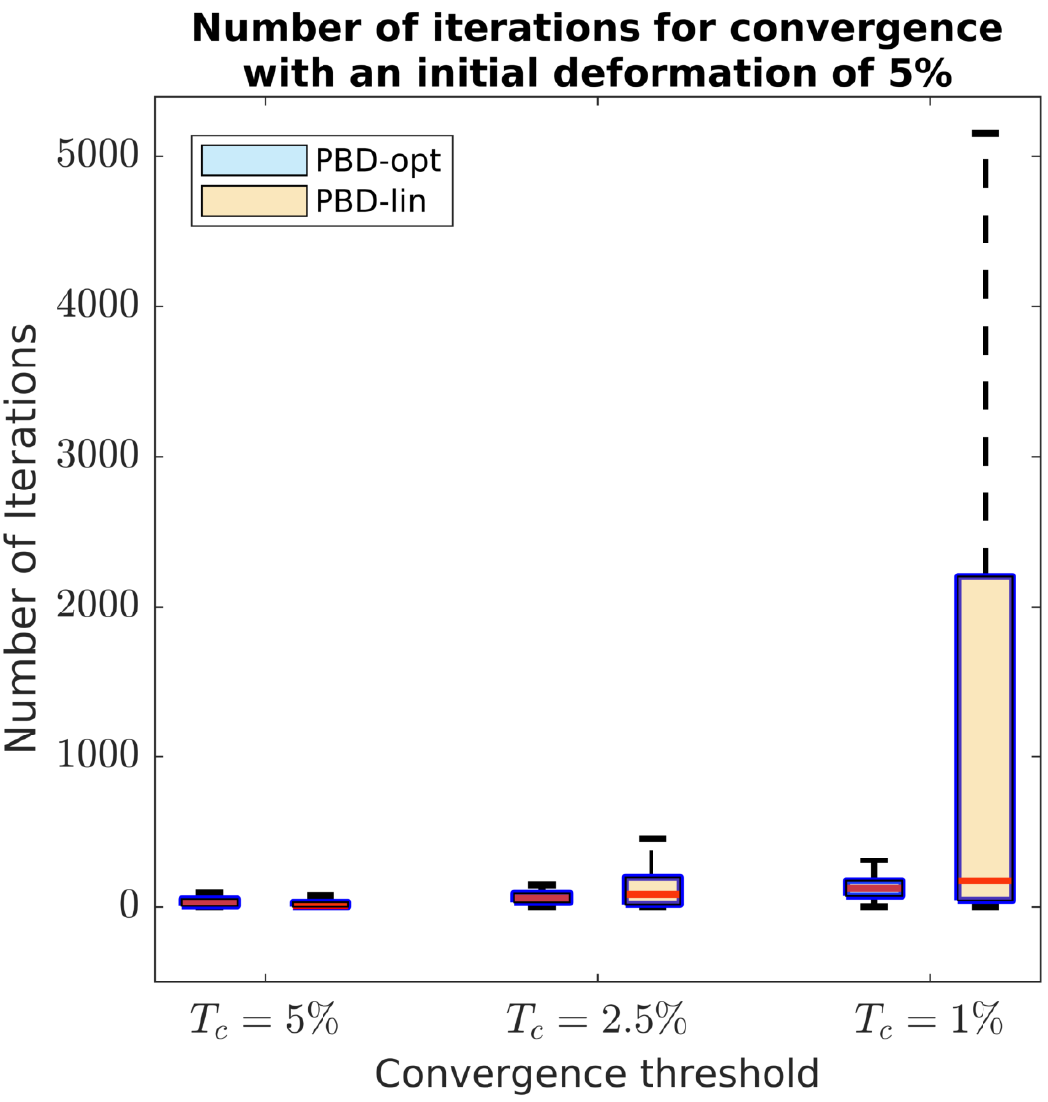}
  \end{subfigure} \quad
  \begin{subfigure}[b]{0.37\textwidth}
    \includegraphics[width=\textwidth]{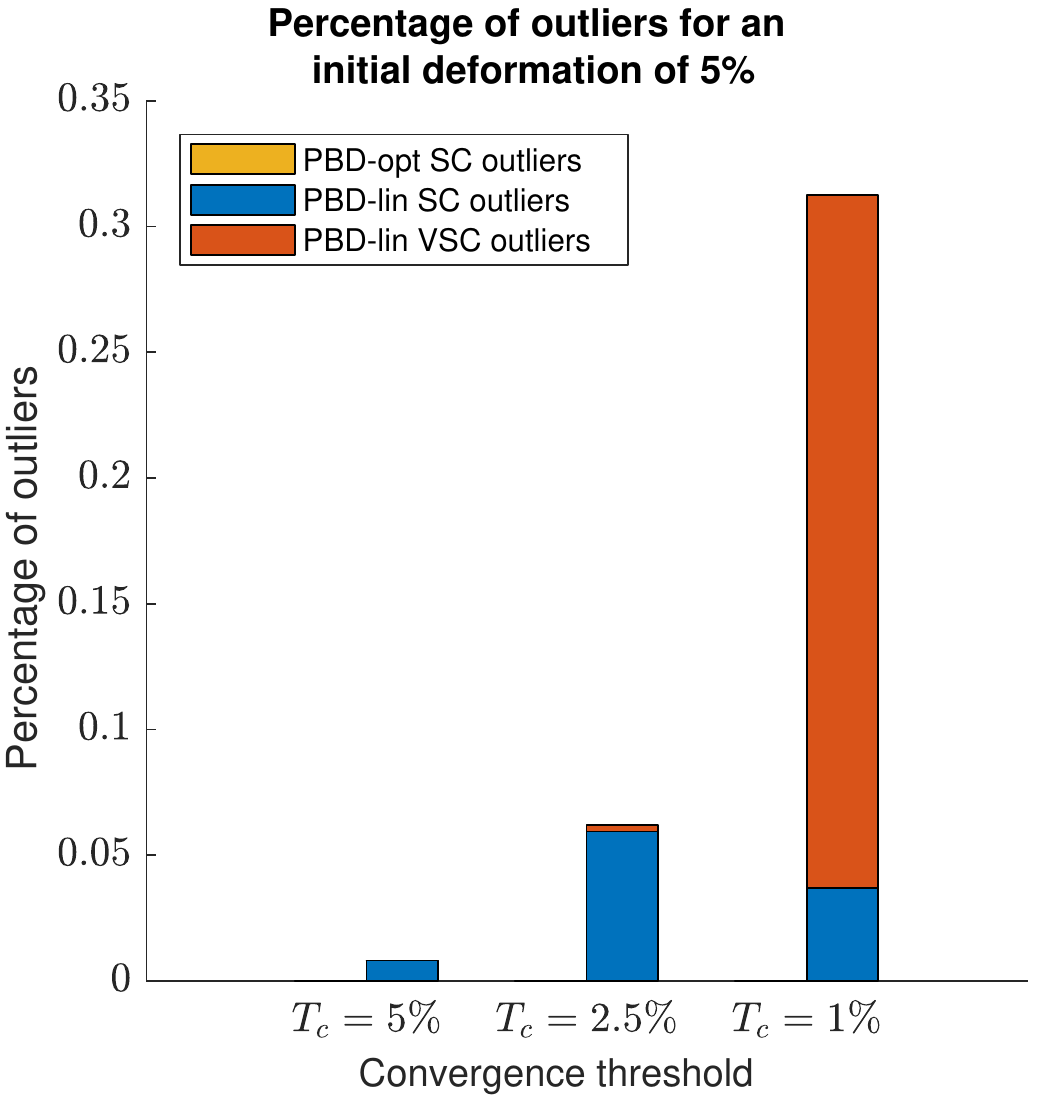}
  \end{subfigure} \\
  \begin{subfigure}[b]{0.37\textwidth}
    \includegraphics[width=\textwidth]{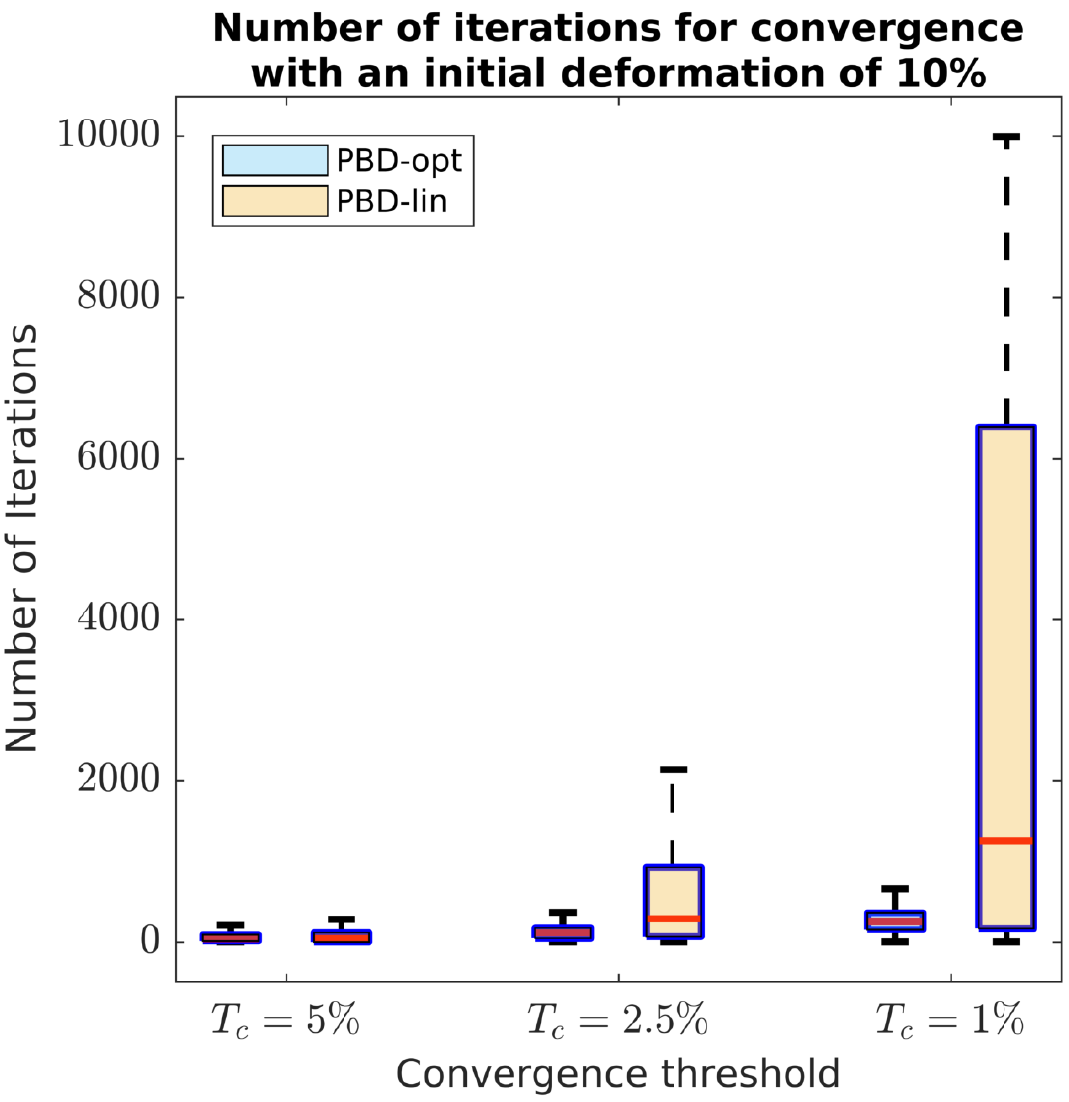}
  \end{subfigure} \quad
  \begin{subfigure}[b]{0.37\textwidth}
    \includegraphics[width=\textwidth]{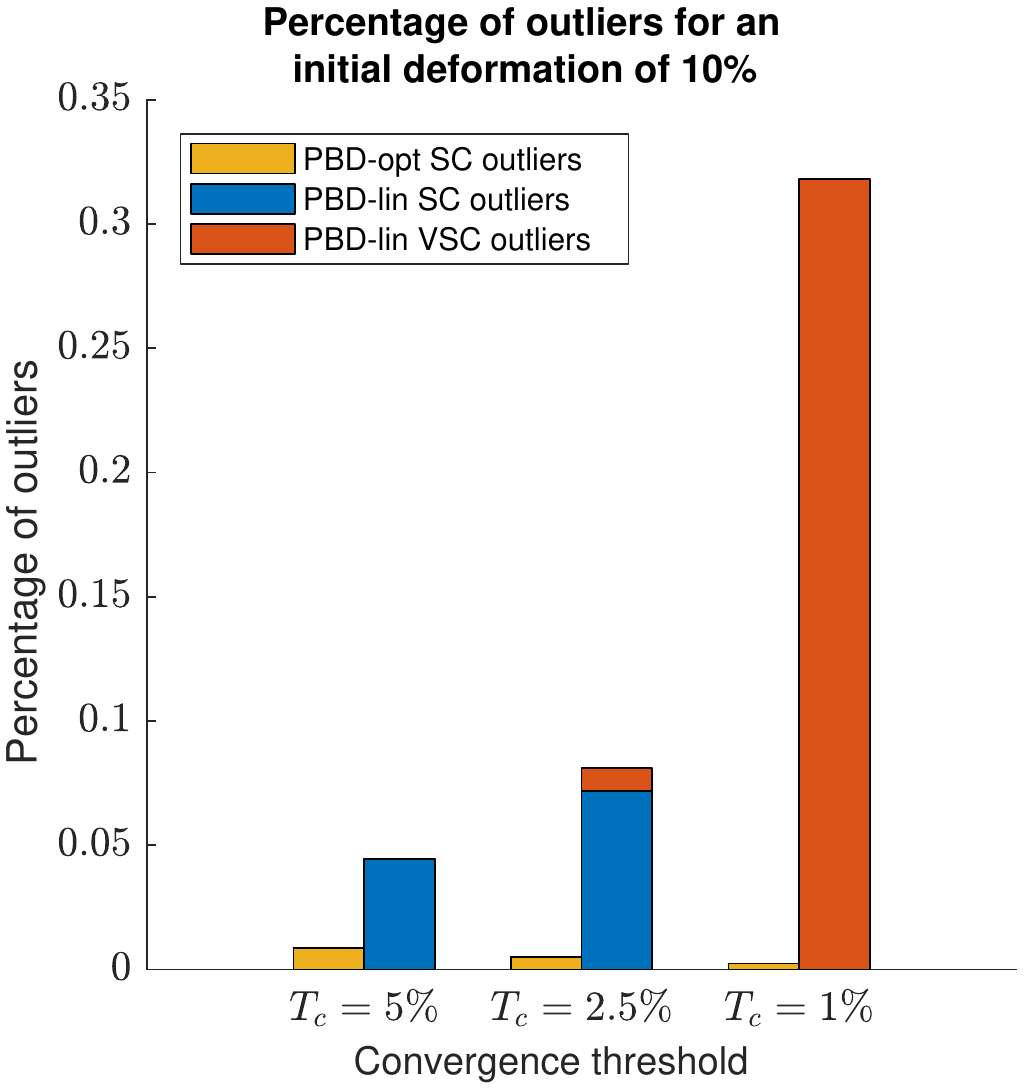}
  \end{subfigure} \\
  \begin{subfigure}[b]{0.37\textwidth}
    \includegraphics[width=\textwidth]{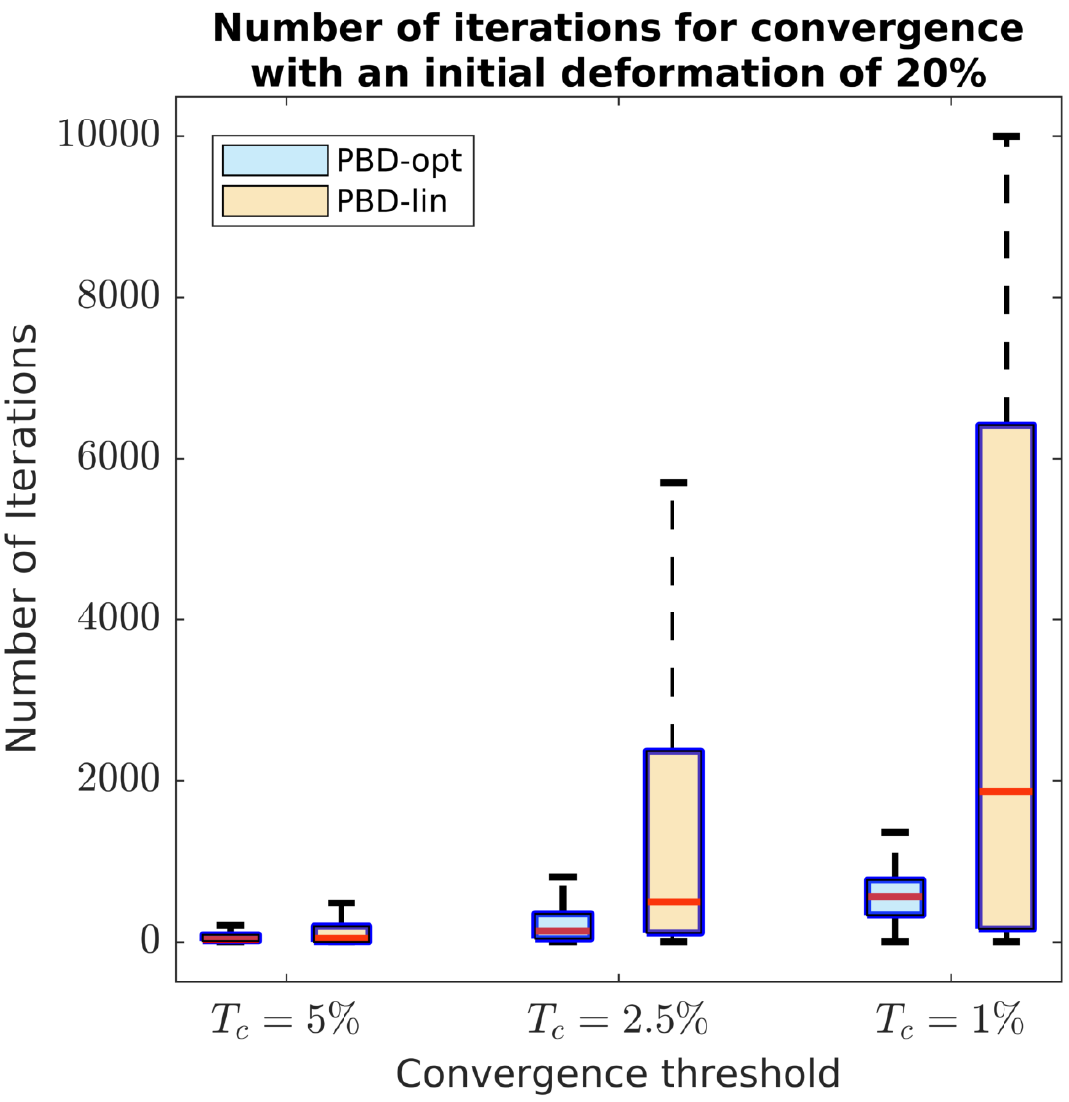}
  \end{subfigure} \quad
  \begin{subfigure}[b]{0.37\textwidth}
    \includegraphics[width=\textwidth]{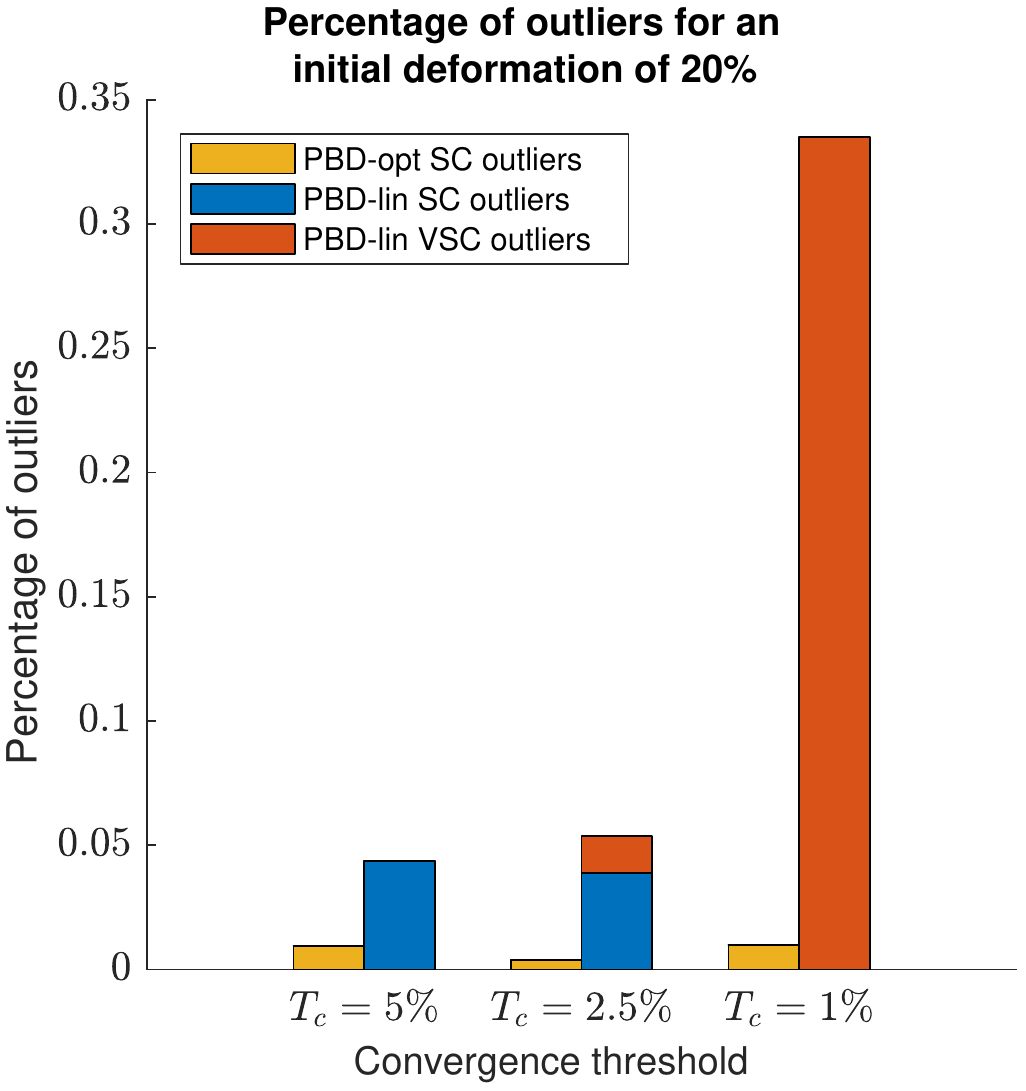}
  \end{subfigure} \\
  \caption{Convergence speed results for fine 2D meshes. The left column shows the statistics for the number of iterations to reach conversion (omitting outliers). The right column shows the proportion of slow convergence (SC) and very slow convergence (VSC) outliers per method.}
  \label{fig:datafine}
\end{figure}

\section{Conclusion}
We have identified two problems related to finding the closest triangle to an input triangle under a prescribed area constraint (the OTPPA problem) and under a prescribed area and orientation constraints (the OTPPAO problem). 
We have given a detailed analysis and a closed-form solution to both of these problems for the first time.
We have then developed a numerically robust algebraic implementation.
We have used it within Point-Based Dynamics, resulting in a 2D triangular mesh editing procedure which has been shown to be faster and more stable than the existing method. 
As future work, we plan to study the equivalent problems in the 3D space, for 3D triangles and tetrahedrons.

\appendix
\section*{Appendices}
\section{Proof of Lemmas}
\label{sec:lempro}
\begin{proof}[Proof of Lemma 1]
We start with the forward implication: $S_1 \Rightarrow A(\tilde{\mathbf{v}})= 0$ and $|\lambda|=\lambda_o$.
In $S_1$, $\tilde{\mathbf{v}}$ represents a single point. This implies $A(\tilde{\mathbf{v}})=0$ and $\sigma^2(\tilde{\mathbf{v}})=0$. Replacing these values in the depressed quartic equation (\ref{eq:depquar}) causes the coefficients $p$ and $r$ to become constants, and coefficient $q$ to vanish (also the orientation constraint vanishes).
The depressed quartic thus transforms into a bi-quadratic:
\begin{equation}
    \lambda^4 - \frac{32}{3}\lambda^2 +\frac{256}{9} = 0,
\end{equation}
whose solutions are:
\begin{equation}
    |\lambda| = \lambda_o. 
\end{equation}
We now turn to the reverse implication: $S_1 \Leftarrow A(\tilde{\mathbf{v}}) = 0$ and $|\lambda|=\lambda_o$. We substitute $|\lambda| = \lambda_o$ in equation (\ref{eq:subconsiar}), giving:
\begin{equation}
    s\sign(A^*(\tilde{\mathbf{v}}))\sign(\lambda)\lambda_o A(\tilde{\mathbf{v}}) - \sigma^2(\tilde{\mathbf{v}})=0.
    \label{eq:contot}
\end{equation}
Since $A(\tilde{\mathbf{v}}) = 0$, then the only solution that satisfies equation (\ref{eq:contot}) for any given value of $s\sign(A^*(\tilde{\mathbf{v}}))\sign(\lambda)$ is with $\sigma^2(\tilde{\mathbf{v}})=0$, which implies that the input triangle is collapsed into a single point, hence to $S_1$. 
\end{proof}

\begin{proof}[Proof of Lemma 2]
We start with the forward implication: $S_2 \Rightarrow A(\tilde{\mathbf{v}}) \neq 0$ and $\lambda=-\lambda_o$.
In $S_2$, $\tilde{\mathbf{v}}$ represents an equilateral triangle and orientation inversion.
This implies $A(\tilde{\mathbf{v}})\neq0$ and $s\sign(A^*(\tilde{\mathbf{v}}))=-1$. 
We perform a similarity transformation to $\tilde{\mathbf{v}}$ by bringing one of its vertices to the origin and another one to the $x$-axis, scaled to normalise their  distance, giving $\tilde{\mathbf{v}}'=[0,0,1,0,1/2,\sign(A^*(\tilde{\mathbf{v}}))\sqrt{3}/2]^\top$ 
where $\sign(A^*(\tilde{\mathbf{v}}))$ determines the orientation of the triangle.
We obtain $A^*(\tilde{\mathbf{v}}')=\sign(A^*(\tilde{\mathbf{v}}))\frac{\sqrt{3}}{4}$ and $\sigma^2(\tilde{\mathbf{v}}')=1$, thus the coefficients of the depressed quartic equation~(\ref{eq:depquar}) become:
\begin{align}
	p &= - \frac{32A_o - 4\sqrt{3}}{3A_o} \\
	q &=  \frac{32}{3A_o} \\
	r &=  \frac{256A_o + 64\sqrt{3}}{9A_o}.
\end{align}
Substituting these coefficients in Cardano's formula we obtain:
\begin{align}
	Q_1 &= -\left(\frac{32A_o + 2\sqrt{3}}{9A_0}\right)^2 \\
	Q_2 &= \left(\frac{32A_o + 2\sqrt{3}}{9A_0}\right)^3,
\end{align}
making $\sqrt{Q_1^3+Q_2^2}=0$ and the real root $\alpha_o$ of Cardano's resolvent cubic to become:
\begin{equation}
    \alpha_o = 2\left(\frac{32A_o + 2\sqrt{3}}{9A_o}\right)+\frac{32A_o-4\sqrt{3}}{9A_o} = \frac{32}{3}.
\end{equation}
We finally use $\alpha_o$ to extract the roots of the depressed quartic:
\begin{equation}
	\lambda = \frac{s_1\sqrt{2}\lambda_o+s_2\sqrt{-\lambda_o\left(\frac{ s_1+1}{A_o}\right)}}{\sqrt{2}},
\end{equation}
where $s_1,s_2 \in \{-1,1\}$. We thus have the following roots:
\begin{equation}
    \lambda \in \left\{ -\lambda_o,-\lambda_o,\lambda_o-i\sqrt{\frac{\lambda_o}{A_o}},\lambda_o+i\sqrt{\frac{\lambda_o}{A_o}}\right\}.
\end{equation}
Considering only the real roots we have $\lambda = -\lambda_o$.

We now turn to the reverse implication: $S_2 \Leftarrow A(\tilde{\mathbf{v}}) \neq 0$ and $\lambda=-\lambda_o$. We substitute $\lambda = -\lambda_o$ in equation (\ref{eq:subconsiar}), giving:
\begin{equation}
    -s\sign(A^*(\tilde{\mathbf{v}}))\lambda_o A(\tilde{\mathbf{v}}) - \sigma^2(\tilde{\mathbf{v}})=0.
    \label{eq:con1}
\end{equation}
Since $A(\tilde{\mathbf{v}})\neq0$ and $\sigma^2(\tilde{\mathbf{v}})>0$ then equation (\ref{eq:con1}) can only be solved when $s\sign(A^*(\tilde{\mathbf{v}}))=-1$. We perform the same similarity transformation used at the beginning of the proof to the unknown input triangle $\tilde{\mathbf{v}}$ giving $\tilde{\mathbf{v}}'=[0,0,1,0,\tilde{x}_c',\tilde{y}_c']^\top$ where one of the vertices remains unknown. We then have $A^*(\tilde{\mathbf{v}}') = \frac{\tilde{y}_c'}{2}$ or \linebreak $\sign(A^*(\tilde{\mathbf{v}}'))A(\tilde{\mathbf{v}}') = \frac{\tilde{y}_c'}{2}$ 
and $\sigma^2(\tilde{\mathbf{v}}') = \frac{2}{3}(\tilde{x}_c'^2 + \tilde{y}_c'^2 - \tilde{x}_c' + 1)$ which we substitute in equation (\ref{eq:con1}) and obtain:
\begin{equation}
    \tilde{x}_c'^2+\tilde{y}_c'^2 -\tilde{x}_c' +\sqrt{3}s\tilde{y}_c' + 1 = 0,
\end{equation}
which we rewrite as:
\begin{equation}
    \left(\tilde{x}_c'-\frac{1}{2}\right)^2 + \left(\tilde{y}_c' + \frac{\sqrt{3}}{2}s\right)^2 = 0.
\end{equation}
This is the equation of a single point, making $\tilde{\mathbf{v}}'$ an equilateral triangle \linebreak 
$\tilde{\mathbf{v}}'=[0,0,1,0,1/2,-\sqrt{3}s/2]^\top$, where $s$ determines the orientation of the triangle. Since $\tilde{\mathbf{v}}'$ was a similarity transformation of $\tilde{\mathbf{v}}$, then $\tilde{\mathbf{v}}$ is also an equilateral triangle when $s\sign(A^*(\tilde{\mathbf{v}}))=-1$, which corresponds to $S_2$.
\end{proof}

\begin{proof}[Proof of Lemma 3]
In $S_3$, $\tilde{\mathbf{v}}$ represents an equilateral triangle with $A(\tilde{\mathbf{v}})/4\geq A_o$ and no orientation inversion.
This implies $A^*(\tilde{\mathbf{v}})\neq0$ and $s\sign(A^*(\tilde{\mathbf{v}}))=1$. 
We perform the same similarity transformation to $\tilde{\mathbf{v}}$ as in lemma 2,
thus the coefficients of the depressed quartic equation~(\ref{eq:depquar}) become:
\begin{align}
	p &= - \frac{32A_o + 4\sqrt{3}}{3A_o} \\
	q &=  \frac{32}{3A_o} \\
	r &=  \frac{256A_o - 64\sqrt{3}}{9A_o}.
\end{align}
Substituting these coefficients in Cardano's formula we obtain:
\begin{align}
	Q_1 &= -\left(\frac{32A_o - 4}{9A_0}\right)^2 \\
	Q_2 &= -\left(\frac{32A_o - 4}{9A_0}\right)^3,
\end{align}
making $\sqrt{Q_1^3+Q_2^2}=0$.
After some factoring we obtain the real root $\alpha_o$ of Cardano's resolvent cubic as:
\begin{equation}
    \alpha_o = -2\sqrt[3]{\left(
    \frac{32A_o - 8sA^*(\tilde{\mathbf{v}}')}{3A_o}\right)^3}+\frac{32A_o+8 s A^*(\tilde{\mathbf{v}}')}{3A_o}.
\end{equation}
Since $s\sign(A^*(\tilde{\mathbf{v}}))=1$ and $A^*(\tilde{\mathbf{v}}') = \sign(A^*(\tilde{\mathbf{v}}))A(\tilde{\mathbf{v}}')$, then $\alpha_o \in \mathbb{R}$ only when $A(\tilde{\mathbf{v}}')/8 \geq A_o$. 
Under this condition, we obtain $\alpha_o = 32$.
Substituting in equation (\ref{eq:roots}) we obtain:
\begin{equation}
    \lambda = \frac{s_1\sqrt{2}\lambda_o+s_2\sqrt{\lambda_o\left(\frac{ s_1-1}{A_o}\right)}}{\sqrt{2}}
\end{equation}
where $s_1,s_2 \in \{-1,1\}$. We thus have the following roots:
\begin{equation}
    \lambda \in \left\{ -\lambda_o-\sqrt{\frac{\lambda_o}{A_o}},-\lambda_o+\sqrt{\frac{\lambda_o}{A_o}},\lambda_o,\lambda_o\right\}.
    \label{eq:eqcase1}
\end{equation}
Thus, we have two roots where $|\lambda|\neq\lambda_o$ (which correspond to solutions for Case I) and there exists at least one solution $\lambda = \lambda_o$ for $S_3$ (which correspond to the solution of Case II).
\end{proof}

\begin{proof}[Proof of Lemma 4]
We substitute $\lambda = \lambda_o$ in equation (\ref{eq:subconsiar}), giving:
\begin{equation}
    s\sign(A^*(\tilde{\mathbf{v}}))\lambda_o A(\tilde{\mathbf{v}}) - \sigma^2(\tilde{\mathbf{v}})=0.
    \label{eq:con2}
\end{equation}
Since $A(\tilde{\mathbf{v}})\neq0$ then equation (\ref{eq:con2}) can only be solved when $s\sign(A^*(\tilde{\mathbf{v}}))=1$. We perform the same similarity transformation to $\tilde{\mathbf{v}}$ as in lemma 2,
substitute $\sign(A^*(\tilde{\mathbf{v}}'))A(\tilde{\mathbf{v}}')$ and $\sigma^2(\tilde{\mathbf{v}}')$ in equation (\ref{eq:con2}) and obtain:
\begin{equation}
    \tilde{x}_c'^2+\tilde{y}_c'^2 -\tilde{x}_c' +\sqrt{3}s\tilde{y}_c' + 1 = 0,
\end{equation}
which we rewrite as:
\begin{equation}
    \left(\tilde{x}_c'-\frac{1}{2}\right)^2 + \left(\tilde{y}_c' + \frac{\sqrt{3}}{2}s\right)^2 = 0.
\end{equation}
This is the equation of a single point making $\tilde{\mathbf{v}}'$ an equilateral triangle \linebreak $\tilde{\mathbf{v}}'= \left[0,0,1,0,1/2,-\sqrt{3}s/2\right]^\top$ where $s$ determines the orientation of the triangle. Since $\tilde{\mathbf{v}}'$ was a similarity transformation of $\tilde{\mathbf{v}}$, then $\tilde{\mathbf{v}}$ is also an equilateral triangle when $s\sign(A^*(\tilde{\mathbf{v}}))=1$ for any value $A(\tilde{\mathbf{v}})$ (including $A(\tilde{\mathbf{v}}')/4 \geq A_o$) which corresponds to $S_3$.
\end{proof}

\begin{proof}[Proof of Lemma 5]
We perform the same similarity transformation to $\tilde{\mathbf{v}}$ as in lemma 2 and obtain $A^*(\tilde{\mathbf{v}}')=\sign(A^*(\tilde{\mathbf{v}}))\frac{\sqrt{3}}{4}=\frac{\sign(A^*(\tilde{\mathbf{v}}))}{\lambda_o}$. 
We relax the condition of $S_3$ where $\frac{A(\tilde{\mathbf{v}}')}{4} \geq A_o$ by reformulating $A_o = \frac{1}{z\lambda_o}$ where $z$ is a scaling factor $z>0 \in \mathbb{R}$. We substitute this in equation~(\ref{eq:eqcase1}) and extract the roots :
\begin{equation}
    \lambda = \left[-\lambda_o-\lambda_o\sqrt{z},-\lambda_o+\lambda_o\sqrt{z},\lambda_o,\lambda_o\right]^\top.
\end{equation}
We start with the solution given by Case I where $|\lambda|\neq\lambda_o$. We compact both values as $\lambda=-\lambda_o+s_3\lambda_o\sqrt{z}$ where $s_3 \in \{-1,1\}$. We substitute this in equation (\ref{eq:mateqadj2}) and obtain:
\begin{equation}
    \mathbf{v}' = \begin{bmatrix}
    \frac{\sqrt{z}-s_3}{2\sqrt{z}} & \frac{\sqrt{3}(\sqrt{z}-s_3 )}{6\sqrt{z}} &  \frac{\sqrt{z}+s_3}{2\sqrt{z}} & \frac{\sqrt{3}(\sqrt{z}-s_3)}{6\sqrt{z}} & \frac{1}{2} & \frac{\sqrt{3}(\sqrt{z}+2s_3)}{6\sqrt{z}}
    \end{bmatrix}^\top.
\end{equation}
We calculate the cost of the solution of Case I by substituting this and $\tilde{\mathbf{v}}'$ in equation (\ref{eq:cost}) and obtain:
\begin{equation}
    \mathscr{C}_1(\mathbf{v}') = \frac{(\sqrt{z}-s_3)^2}{z}
\end{equation}
Now we turn to the solution given by Case II where $\lambda = \lambda_o$. We calculate the basis vector $\mathbf{v}'_c$, the particular solution $\mathbf{v}'_p$ and substitute them in equation~(\ref{eq:case2}) and obtain:
\begin{equation}
    \mathbf{v}' = \begin{bmatrix}
    \frac{1}{4} - \frac{\sqrt{3}\sqrt{z-4}}{12\sqrt{z}} \\ 
    \frac{\sqrt{3}}{12} - \frac{\sqrt{z-4}}{4\sqrt{z}} \\  
    \frac{3}{4} - \frac{\sqrt{3}\sqrt{z-4}}{12\sqrt{z}} \\ 
    \frac{\sqrt{3}}{12} + \frac{\sqrt{z-4}}{4\sqrt{z}} \\
    \frac{1}{2} + \frac{\sqrt{3}\sqrt{z-4}}{6\sqrt{z}} \\
    \frac{\sqrt{3}}{3}
    \end{bmatrix}.
\end{equation}
We calculate the cost of the solution of Case II by substituting this and $\tilde{\mathbf{v}}'$ in equation (\ref{eq:cost}) and obtain:
\begin{equation}
    \mathscr{C}_2(\mathbf{v}') = \frac{1}{2} - \frac{1}{z}
\end{equation}
We compare the cost of both solutions $\mathscr{C}_1(\mathbf{v}')\geq\mathscr{C}_2(\mathbf{v}')$ and obtain:
\begin{equation}
     \frac{(\sqrt{z}-s_3)^2}{z} \geq \frac{1}{2} - \frac{1}{z}
\end{equation}
After some minor manipulations we obtain:
\begin{equation}
    \frac{z-4s_3\sqrt{z}+6}{2z} \geq 0
\end{equation}
We substitute $\sqrt{z}=a$ where $a>0 \in \mathbb{R}$ and obtain the quadratic expression:
\begin{equation}
    \frac{a^2-4s_3a+6}{2a^2} \geq 0
\end{equation}
which represents an upward opening parabola which is always positive for any value of $a$ and thus $z$. This implies that $\mathscr{C}_1(\mathbf{v}')\geq\mathscr{C}_2(\mathbf{v}')$ for any value $z$ including $z>4$. Since $\tilde{\mathbf{v}}'$ was a similarity transformation of $\tilde{\mathbf{v}}$, then $\mathscr{C}_1(\mathbf{v})\geq\mathscr{C}_2(\mathbf{v})$.
This means that in $S_3$ the solution provided by case II has the lowest cost, thus is the optimal solution.
\end{proof}

\section{Optimal Triangle Projection with Prescribed Area}
\label{sec:OTPPA}
In OTPPA, only the prescribed area needs to be preserved. This means that a solution may freely choose the orientation which minimises the cost, as long as the area constraint is satisfied. Consequently, we expect that OTPPA has a larger set of local extrema than OTPPAO and hence more candidate algebraic solutions. Specifically, OTPPA is stated as:
\begin{equation}
    \min_{\mathbf{v} \in \mathbb{R}^6} \mathscr{C}(\mathbf{v}) \quad \text{s.t.} \quad f(\mathbf{v})=0 .
    \label{eq:mini3}
\end{equation}
The area constraint in OTPPA is technically more complex to handle than the signed area constraint in OTPPAO, because it involves an absolute value. 
Fortunately, a solution may be obtained by exploiting a reformulation in terms of two rounds of OTPPAO.
Similarly to OTPPAO, we start by replacing $A(\mathbf{v})$ by $A^*(\mathbf{v})$ in the area constraint $f(\mathbf{v})$, expressing $f$ as the disjunction of  two cases:
\begin{align}
    f(\mathbf{v}) =  0  &\quad\Leftrightarrow\quad \left(f^+(\mathbf{v})=0\right) \lor \left(f^-(\mathbf{v})=0\right) \quad \mbox{ with} \\
    f^+(\mathbf{v}) &= A^*(\mathbf{v})-A_o  \\
    f^-(\mathbf{v}) &=-A^*(\mathbf{v})-A_o.  
\end{align}
We seek a solution which satisfies either $f^+$ or $f^-$.
This can be achieved by solving OTPPAO for $s=1$ and $s=-1$, and simply selecting the minimal cost solution a posteriori. We present the numerically robust procedure in Algorithm \ref{alg:solOTPPA}.

\begin{algorithm}
    \caption{Optimal Triangle Projection with Prescribed Area}
    \label{alg:solOTPPA}
    \begin{algorithmic}[1]
    \Require $\tilde{\mathbf{v}}$ - input vertices, $A_o$ - prescribed area, $s$ - prescribed orientation, $E$ - area error tolerance
    \Ensure $\mathbf{v}_o$ - optimal triangle, $\mathcal{v}_1$ - Case I triangle set, $\mathcal{v}_2$ - Case II triangle set, $\mathcal{v}_c, \mathcal{v}_t$ - Case II basis and translations sets
    \Function{OTPPA}{$\tilde{\mathbf{v}},A_o,s, E = 10^{-3}$}
    \State $\mathcal{v}_1^- \gets $ \Call{SolveCase1}{$\tilde{\mathbf{v}},A_o,-1,E$} \Comment{Compute Case I solutions}
    \State $\mathcal{v}_1^+ \gets $ \Call{SolveCase1}{$\tilde{\mathbf{v}},A_o,1,E$} 
    \State $(\mathbf{v}_2^-, \mathbf{v}_c^-, \mathbf{v}_t^-) \gets$ \Call{SolveCase2}{$\tilde{\mathbf{v}},A_o,-1,E$}  \Comment{Compute Case II solutions}
    \State $(\mathbf{v}_2^+, \mathbf{v}_c^+, \mathbf{v}_t^+) \gets$ \Call{SolveCase2}{$\tilde{\mathbf{v}},A_o,1,E$}  
    
    \State $\mathcal{v}_1 \gets \mathcal{v}_1^- \cup \mathcal{v}_1^+$ \Comment{Add the vertices to the solution set}
    \State $\mathcal{v}_2 \gets \{ \mathbf{v}_2^- \} \cup \{ \mathbf{v}_2^+ \}$ 
    \State $\mathcal{v}_c \gets \{ \mathbf{v}_c^- \} \cup \{ \mathbf{v}_c^+ \}$
    \State $\mathcal{v}_t \gets \{ \mathbf{v}_t^- \} \cup \{ \mathbf{v}_t^+ \}$
    \State $\mathbf{v}_o\gets$ \Call{FindTriangleOfMinimalCost}{$\mathcal{v}_1 \cup\mathcal{v}_2$} \Comment{Select the optimal solution}
    \State \textbf{return} $\mathbf{v}_o, \mathcal{v}_1, \mathcal{v}_2, \mathcal{v}_c, \mathcal{v}_t$    
    \EndFunction
    \end{algorithmic}
\end{algorithm}
\section{Ferrari's Method for the Depressed Quartic Roots}
\label{sec:ferrari}
We give Ferrari's method for solving the depressed quartic equation $\lambda^4 - p\lambda^2 + q\lambda + r = 0$ (\cite{tignol_galois_2001}). 
\begin{lem}
If $\lambda^4 - p\lambda^2 + q\lambda + r = 0$ and $q\neq 0$ then there exists an $\alpha_o \neq 0$ such that \linebreak $\lambda = \frac{s_1\sqrt{2\alpha_o}+s_2\sqrt{-\left(2p+2\alpha_o+s_1\frac{2q}{\sqrt{2\alpha_o}}\right)}}{2}$, where $s_1,s_2\in\{-1,1\}$.
\label{lem:quarqnot0}
\end{lem}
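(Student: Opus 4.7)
The plan is to carry out Ferrari's classical reduction in a form that exposes the parameter $\alpha_o$ explicitly. The quartic $\lambda^4+p\lambda^2+q\lambda+r=0$ can be rewritten as $\lambda^4=-p\lambda^2-q\lambda-r$, and I would add the auxiliary term $(p+2\alpha_o)\lambda^2+(p/2+\alpha_o)^2$ to both sides, where $\alpha_o$ is a free parameter. The left-hand side then collapses into the perfect square
\begin{equation*}
\bigl(\lambda^2+p/2+\alpha_o\bigr)^2,
\end{equation*}
while the right-hand side becomes the quadratic in $\lambda$
\begin{equation*}
2\alpha_o\lambda^2-q\lambda+\bigl((p/2+\alpha_o)^2-r\bigr).
\end{equation*}

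The next step is to pick $\alpha_o$ so that this right-hand quadratic is also a perfect square. Its discriminant must vanish, which yields the resolvent cubic
\begin{equation*}
8\alpha_o^{3}+8p\,\alpha_o^{2}+2(p^2-4r)\,\alpha_o-q^2=0.
\end{equation*}
I would then invoke two standard facts: (i) any cubic with real coefficients admits at least one real root, so a real $\alpha_o$ exists; and (ii) the constant term $-q^2$ is nonzero by hypothesis, so $\alpha_o=0$ cannot be a root, giving $\alpha_o\neq 0$ as required. (The explicit Cardano expression stated earlier in the paper is one way to produce such a root, but existence alone suffices for the lemma.)

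Once $\alpha_o$ is fixed, the right-hand quadratic factors as $\bigl(\sqrt{2\alpha_o}\,\lambda-q/(2\sqrt{2\alpha_o})\bigr)^2$, and taking square roots of the identity $\bigl(\lambda^2+p/2+\alpha_o\bigr)^2=\bigl(\sqrt{2\alpha_o}\,\lambda-q/(2\sqrt{2\alpha_o})\bigr)^2$ introduces the sign $s_1\in\{-1,1\}$, giving the quadratic
\begin{equation*}
\lambda^2-s_1\sqrt{2\alpha_o}\,\lambda+\Bigl(p/2+\alpha_o+s_1\tfrac{q}{2\sqrt{2\alpha_o}}\Bigr)=0.
\end{equation*}
Applying the standard quadratic formula and writing the $\pm$ as $s_2\in\{-1,1\}$ then delivers the claimed expression for $\lambda$.

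The only genuine obstacle is the nonvanishing of $\alpha_o$: everything else is algebraic bookkeeping. That issue is dispatched by the observation that $\alpha_o=0$ reduces the resolvent cubic to $-q^2=0$, in direct contradiction with the assumption $q\neq 0$, so any real root of the resolvent cubic is automatically nonzero and $\sqrt{2\alpha_o}$ in the denominator is legitimate.
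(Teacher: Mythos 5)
Your proposal is correct and follows essentially the same route as the paper's proof: complete the square with the auxiliary parameter, force the right-hand quadratic to be a perfect square via the vanishing discriminant (yielding the same resolvent cubic $8\alpha_o^3+8p\alpha_o^2+(2p^2-8r)\alpha_o-q^2=0$), rule out $\alpha_o=0$ from the constant term $-q^2\neq 0$, and finish with the difference of squares and the quadratic formula. The only cosmetic differences are that you add the full auxiliary term in one step rather than two, and you invoke the existence of a real root of a real cubic abstractly where the paper exhibits it via Cardano's formula.
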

\begin{proof}[Proof of Lemma 6]
First, one rewrites the depressed quartic equation as:
\begin{equation}
	\lambda^2 + \frac{p}{2} = -q\lambda -r.
\end{equation}
One adds $\frac{p^2}{4}$ to both sides to complete the square on the left-hand side as:
\begin{equation}
	\left(\lambda^2 + \frac{p}{2} \right)^2 = -q\lambda -r + \frac{p^2}{4}.
\end{equation}
One then introduces a variable factor $\alpha$ into the left-hand side by adding $2\lambda^2\alpha + p\alpha + \alpha^2$ to both sides. Grouping the coefficients by powers of $\lambda$ in the right-hand side gives:
\begin{equation}
	\left(\lambda^2 + \frac{p}{2} + \alpha\right)^2 = 2\alpha \lambda^2-q\lambda+\left(\alpha^2+\alpha p+\frac{p^2}{4}-r\right).
	 \label{eq:ferrari}
\end{equation}
A quadratic expression $ax^2 + bx + c$ is considered a perfect square when its discriminant $b^2 - 4ac = 0$ vanishes, allowing one to rewrite it as  $(\sqrt{a}x + \sqrt{c})^2$. We use this idea to  choose a value for $\alpha$ such that the bracketed expression in the right-hand side of equation~(\ref{eq:ferrari}), which is a quadratic in $\lambda$, becomes a perfect square.
Specifically, vanishing the discriminant gives:
\begin{equation}
	q^2-8 \alpha \left(\alpha^2+\alpha p+\frac{p^2}{4}-r\right) = 0.
\end{equation}
Upon expanding, it forms a cubic equation in $\alpha$, called the resolvent cubic of the quartic equation:
\begin{equation}
	8\alpha^3 + 8p\alpha^2+(2p^2-8r)\alpha-q^2 = 0.
\end{equation}
This equation implies $\alpha\neq 0$. Indeed, $\alpha = 0$ would imply $q=0$, contradicting our hypothesis $q\neq 0$. 
A real root $\alpha_o\neq 0$ is obtained from Cardano's formula, given in section~\ref{sec:cardan}. Substituting in equation~(\ref{eq:ferrari}), we obtain:
\begin{equation}
	\left(\lambda^2 + \frac{p}{2} + \alpha_o\right)^2 = \left(\lambda\sqrt{2\alpha_o}-\frac{q}{2\sqrt{2\alpha_o}}\right)^2.
	\label{eq:ferrari2}
\end{equation}
This equation is of the form $M^2 = N^2$, which can be rearranged as $M^2 - N^2 = 0$ or $(M+N)(M-N)=0$:
\begin{equation}
	\left(\lambda^2 + \frac{p}{2} + \alpha_o + \lambda\sqrt{2\alpha_o}-\frac{q}{2\sqrt{2\alpha_o}}\right)\left(\lambda^2 + \frac{p}{2} + \alpha_o - \lambda\sqrt{2\alpha_o}+\frac{q}{2\sqrt{2\alpha_o}}\right) = 0.
\end{equation}
This is easily solved by applying the quadratic formula to each factor, leading to:
\begin{equation}
	\lambda = \frac{s_1\sqrt{\alpha_o}+s_2\sqrt{-\left(p+\alpha_o+s_1\frac{q}{\sqrt{2\alpha_o}}\right)}}{\sqrt{2}},
\end{equation}
where $s_1,s_2 \in \{-1,1\}$. 
\end{proof}

\section{Cardano's Method for the Cubic Roots}
\label{sec:cardan}
We consider the cubic equation:
\begin{equation*}
	ax^3+bx^2+cx+d=0  \quad\text{with} \quad a\neq0.
\end{equation*}
Its solutions are:
\begin{align*}
	x_1 &= S_1 + S_2 -\frac{b}{3a}  \\
	x_2 &= -\frac{S_1 + S_2}{2} -\frac{b}{3a} +\frac{i\sqrt{3}}{2}(S_1-S_2)\\
	x_3 &= -\frac{S_1 + S_2}{2} -\frac{b}{3a} -\frac{i\sqrt{3}}{2}(S_1-S_2),
\end{align*}
where:
\begin{align*}
	S_1 &= \sqrt[3]{Q_2+\sqrt{Q_1^3+Q_2^2}} \\
	S_2 &= \sqrt[3]{Q_2-\sqrt{Q_1^3+Q_2^2}} \\
	Q_1 &= \frac{3ac-b^2}{9a^2} \\
	Q_2 &= \frac{9abc-27a^2d-2b^3}{54a^3},
\end{align*}
and $D = Q_1^3+Q_2^2$ is the discriminant of the equation.
For $a,b,c,d \in \mathbb{R}$, three cases can occur:
\begin{align*}
	(1): &\qquad \text{if}\,D>0, \text{ one root is real and two are complex conjugates} \\
	(2): &\qquad \text{if} \,D=0, \text{ all roots are real, and at least two are equal} \\
	(3): &\qquad \text{if} \,D<0, \text{ all roots are real and unequal.}
\end{align*}
\section{Formulation for Restricted Cases}
\label{sec:specas}
Our original formulation assumes that the three triangle vertices are free to move. However, there exist cases when one or two of the vertices are fixed. Typically, this occurs for triangles lying on the domain boundary in mesh editing. We here adapt the proposed optimal projection formulation to these cases. 

\subsection{One Fixed Vertex}
\subsubsection{A Two Case Formulation}
We assume that $v_c$ is fixed. Thus, we have $\mathbf{v} = [\mathbf{u}, \tilde{x}_c,\tilde{y}_c]$ and $\tilde{\mathbf{v}} = [\tilde{\mathbf{u}}, \tilde{x}_c,\tilde{y}_c]$, where the moving vertices are represented by $\mathbf{u} = [x_a,y_a,x_b,y_b] \in \mathbb{R}^4$ and the corresponding input vertices by 
$\tilde{\mathbf{u}}= [\tilde{x}_a, \tilde{y}_a,\tilde{x}_b, \tilde{y}_b]\in \mathbb{R}^4$.
We take $\frac{\partial \mathscr{L}}{\partial \mathbf{u}}=0$ which is formed by the first four equalities of equation~(\ref{eq:lagmult}), which we rewrite in matrix form $X\mathbf{u} = \mathbf{b}$ as:
\begin{equation}
	\begin{bmatrix}
	4 		  & 	    0 & 	    0 & s\lambda \\
	0		  &   		4 & -s\lambda &		   0 \\
	0 		  & -s\lambda & 	    4 &		   0 \\
	s\lambda  & 		0 &         0 &        4 \\
	\end{bmatrix} 
	\begin{bmatrix}
	x_a \\ y_a \\ x_b \\ y_b 
	\end{bmatrix}
	=
	4
	\begin{bmatrix}
	\tilde{x}_a + s\frac{\lambda}{4}(\tilde{y}_c) \\ 
	\tilde{y}_a - s\frac{\lambda}{4}(\tilde{x}_c) \\
	\tilde{x}_b - s\frac{\lambda}{4}(\tilde{y}_c) \\
	\tilde{y}_b + s\frac{\lambda}{4}(\tilde{x}_c) 
	\end{bmatrix}.
	\label{eq:mat_lam2}
\end{equation}
We check the invertibility of $X$ from its determinant:
\begin{equation}
	\det(X) = {(\lambda^2-16)}^2.
\end{equation}
We thus have:
\begin{equation}
	\det(X)=0 \quad \Leftrightarrow \quad |\lambda| = 4.
	\label{eq:lambdanull2}
\end{equation}
We will see that the particular case of $\det(X)=0$ may occur in practice. We thus solve system~(\ref{eq:mat_lam2}) with two cases. In Case I, which is the most general, we have $|\lambda| \neq 4$. In Case II, we have $|\lambda| = 4$. Similarly to the general case in section~\ref{sec:twocas}, the following lemmas establish the relationship between the Lagrange multiplier and the linear deficiency of the input vertices. 

\begin{prop}
\label{prop:gen2}
Most settings (input vertices $\tilde{\mathbf{v}}$, prescribed area $A_o$ and orientation $s$) correspond to $F_o$ and fall in Case I. Exceptions handled with Case II are:
\begin{itemize}
    \item $F_1$: $\tilde{\mathbf{v}}$ is a single point
    \item $F_2$: $\tilde{\mathbf{v}}$ is a right isosceles triangle and
            $s\sign(A^*(\tilde{\mathbf{v}}))=-1$
    \item $F_3$: $\tilde{\mathbf{v}}$ is a right isosceles triangle, $A(\tilde{\mathbf{v}})/4\geq A_o$ and $s\sign(A^*(\tilde{\mathbf{v}}))=1$
\end{itemize}
\label{prop:main2}
\end{prop}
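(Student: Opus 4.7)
The plan is to mirror the structure of the proof of Proposition 1 from section~\ref{sec:twocas} and Appendix~\ref{sec:lempro}, adapted to the reduced $4\times 4$ linear system~(\ref{eq:mat_lam2}). First I would multiply equation~(\ref{eq:mat_lam2}) by the adjugate of the $4\times 4$ matrix $X$ to express the moving vertices $\mathbf{u}$ as a rational function of $\lambda$ scaled by $\det(X) = (\lambda^2-16)^2$. Substituting this into the signed area constraint $sA^*(\mathbf{v})=A_o$ (where the fixed vertex $v_c$ contributes known terms) should yield a polynomial equation in $\lambda$ analogous to the depressed quartic~(\ref{eq:depquar}), whose coefficients now depend on $\tilde{x}_c,\tilde{y}_c$ as well as on $A^*(\tilde{\mathbf{v}})$ and a partial variance.

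I would then prove five lemmas parallel to Lemmas 1--5. For a lemma analogous to Lemma 1, if $\tilde{\mathbf{v}}=F_1$ is a single point then the free coefficients degenerate, leaving a biquadratic whose real roots satisfy $|\lambda|=4$; conversely, substituting $|\lambda|=4$ into the reduced area-variance identity (cf.~equation~(\ref{eq:contot})) should force the moving input vertices to collapse onto $\tilde{v}_c$. For the analogues of Lemmas 2 and 4, I would perform a similarity transformation bringing $\tilde{v}_c$ to the origin and one other vertex onto the $x$-axis with unit distance, so that $\tilde{\mathbf{v}}'$ is parameterised by the single remaining free vertex. Substituting into the reduced analogue of equation~(\ref{eq:subconsiar}) at $\lambda=\pm 4$ and completing the square should yield the equation of a single point, pinning the remaining vertex down to a location that makes $\tilde{\mathbf{v}}$ a right isosceles triangle with the right angle at $v_c$; the sign of $s\sign(A^*(\tilde{\mathbf{v}}))$ distinguishes $F_2$ ($\lambda=-4$) from $F_3$ ($\lambda=+4$). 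The lemma analogous to Lemma 3 would, under $F_3$, solve Cardano's formula explicitly (again obtaining $\sqrt{Q_1^3+Q_2^2}=0$) and exhibit the four roots, showing that $\lambda=4$ is always among them whereas the two Case~I roots are real only when $A(\tilde{\mathbf{v}})/4\geq A_o$. Finally, the analogue of Lemma 5 would compare the costs of the Case~I and Case~II solutions, reduce to a quadratic in $\sqrt{z}$ whose discriminant is negative, and conclude that Case~II attains the minimum.

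With these five lemmas in hand, the proof of Proposition~2 assembles exactly as that of Proposition~1: Lemmas (1-like), (2-like), (4-like) prove that $F_1$, $F_2$, $F_3$ are the only settings compatible with $|\lambda|=4$, establishing Case~I as generic; Lemmas (1-like) and (2-like) trivially place $F_1$ and $F_2$ in Case~II; and Lemmas (3-like) and (5-like) show $F_3$ is also handled by Case~II despite admitting a valid Case~I candidate.

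The main obstacle I expect is showing the emergence of the \emph{right isosceles} shape rather than the equilateral shape encountered in the unrestricted problem. Geometrically this is because fixing $v_c$ breaks the three-fold symmetry of the free problem but preserves a two-fold symmetry between $v_a$ and $v_b$ about $v_c$; algebraically it should drop out of completing the square of the reduced constraint, but verifying that the resulting single-point equation is $(\tilde{x}_a'-c_1)^2+(\tilde{y}_a'-c_2)^2 = 0$ with $c_1,c_2$ encoding perpendicularity and equality of the two $v_c$-incident edges will require careful bookkeeping of the additional $\tilde{x}_c,\tilde{y}_c$ terms arising from the right-hand side of~(\ref{eq:mat_lam2}). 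Once that normal form is obtained, the rest of the argument is a direct transcription of Appendix~\ref{sec:lempro} with $\lambda_o$ replaced by $4$ and the equilateral constraint replaced by the right-isosceles constraint.
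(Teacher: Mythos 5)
Your proposal follows essentially the same route as the paper: the same five-lemma structure mirroring Lemmas 1--5 (the paper's Lemmas 7--11), the same similarity normalisation placing the fixed vertex at the origin, the same completing-the-square argument forcing the right-isosceles normal form, the same degenerate Cardano computation with $\sqrt{Q_1^3+Q_2^2}=0$, and the same cost comparison reducing to a non-negative quadratic in $\sqrt{z}$, assembled into the proposition exactly as in the proof of Proposition 1. The approach is correct and matches the paper's own proof.
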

\noindent The proof of proposition~\ref{prop:main2} is  based on the following five lemmas.
\begin{lem}
$F_1 \iff A(\tilde{\mathbf{v}})=0$ and $|\lambda|=4$.
\end{lem}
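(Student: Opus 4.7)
The plan is to follow the template of the proof of Lemma 1, adapted to the restricted $4\times 4$ system~(\ref{eq:mat_lam2}) whose determinant vanishes precisely when $|\lambda|=4$. The first step is to derive the analogue of the polynomial constraint~(\ref{eq:subconsiar}) for the restricted system. This is done by multiplying~(\ref{eq:mat_lam2}) by the adjugate of $X$ and substituting the resulting expressions for $\mathbf{u}$ into the signed area constraint $s A^*(\mathbf{v}) = A_o$. Because $v_c$ is fixed, the expression reduces to a polynomial in $\lambda$ whose coefficients depend on $A^*(\tilde{\mathbf{v}})$, $A_o$, and a variance-like term $\sigma_c^2(\tilde{\mathbf{v}})$ measuring the spread of $\tilde{v}_a$ and $\tilde{v}_b$ about the fixed vertex $\tilde{v}_c$.

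For the forward implication, I would assume $F_1$, i.e.\ $\tilde{v}_a = \tilde{v}_b = \tilde{v}_c$. Then $A(\tilde{\mathbf{v}})=0$ is immediate. Translating the coordinate frame to place the common point at the origin yields $\tilde{\mathbf{u}} = 0$ and $\tilde{v}_c = 0$, so the right-hand side of~(\ref{eq:mat_lam2}) vanishes and the system reduces to the homogeneous equation $X\mathbf{u} = 0$. Since $A_o > 0$, a nontrivial $\mathbf{u}$ must exist in order for the area constraint to be meetable, forcing $\det(X)=0$, i.e.\ $|\lambda|=4$ by~(\ref{eq:lambdanull2}).

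For the reverse implication, I would assume $A(\tilde{\mathbf{v}})=0$ and $|\lambda|=4$, and substitute $|\lambda|=4$ into the polynomial constraint derived in the first step. Since $A(\tilde{\mathbf{v}})=0$ implies $A^*(\tilde{\mathbf{v}})=0$, every term carrying $A^*(\tilde{\mathbf{v}})$ drops out, leaving an equation of the form $c\,\sigma_c^2(\tilde{\mathbf{v}})=0$ with $c\neq 0$. This forces $\sigma_c^2(\tilde{\mathbf{v}})=0$, which is equivalent to $\tilde{v}_a = \tilde{v}_b = \tilde{v}_c$, i.e.\ $F_1$. This mirrors how in Lemma 1 one reduced equation~(\ref{eq:contot}) under $A(\tilde{\mathbf{v}})=0$ to obtain $\sigma^2(\tilde{\mathbf{v}})=0$.

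The principal obstacle is the symbolic derivation of the restricted polynomial constraint and the precise identification of the variance-like term $\sigma_c^2$, together with verifying that the coefficient multiplying $\sigma_c^2$ in the $|\lambda|=4$ specialisation is nonzero. The algebra differs from the three-free-vertex case because the fixed vertex $\tilde{v}_c$ acts as an inhomogeneous forcing term in the linear system and contributes both to the area coefficient and to the second-moment coefficient of the constraint polynomial, so care is required to track how $\tilde{v}_c$ propagates through $\operatorname{adj}(X)\mathbf{b}$ and the subsequent signed-area evaluation.
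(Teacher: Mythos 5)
Your proposal is correct and its second half coincides with the paper's proof: the paper derives the restricted quartic~(\ref{eq:quart2}) with coefficients in $A^*(\tilde{\mathbf{v}})$, $A_o$ and the second-moment term $D_o(\tilde{\mathbf{v}})=(\tilde{x}_a-\tilde{x}_c)^2+(\tilde{y}_a-\tilde{y}_c)^2+(\tilde{x}_b-\tilde{x}_c)^2+(\tilde{y}_b-\tilde{y}_c)^2$ --- exactly the ``variance about the fixed vertex'' you call $\sigma_c^2$, and already worked out in the paper's Case~I derivation, so the ``principal obstacle'' you flag is not an obstacle --- and for the reverse implication it substitutes $|\lambda|=4$ and $A^*(\tilde{\mathbf{v}})=0$ to get $\pm 4 s\,\sign(A^*(\tilde{\mathbf{v}}))\,A(\tilde{\mathbf{v}})-D_o(\tilde{\mathbf{v}})=0$, hence $D_o(\tilde{\mathbf{v}})=0$ and colocation, just as you do (note the $A_o$ terms disappear because $\delta=\lambda^2-16=0$, not merely because $A^*$ vanishes). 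Where you genuinely diverge is the forward implication: the paper substitutes $A^*(\tilde{\mathbf{v}})=0$ and $D_o(\tilde{\mathbf{v}})=0$ into the quartic, obtaining the bi-quadratic $\lambda^4-32\lambda^2+256=0$ whose only solutions are $|\lambda|=4$, whereas you argue structurally that colocation makes the right-hand side of~(\ref{eq:mat_lam2}) vanish, so a stationary point with $A^*(\mathbf{v})=sA_o\neq 0$ requires a nontrivial solution of $X\mathbf{u}=0$ and hence $\det(X)=0$. Both are valid; the paper's computation gives the admissible $\lambda$ explicitly as a by-product, while your linear-algebra argument is shorter, avoids the quartic entirely for this direction, and makes the mechanism (rank deficiency forced by a homogeneous system with a nonzero area requirement) more transparent.
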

\begin{lem}
$F_2 \iff A(\tilde{\mathbf{v}}) \neq 0$ and $\lambda=-4$.
\end{lem}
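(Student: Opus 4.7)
The plan is to mirror the two-step structure of Lemma 2, adapted to the restricted $4\times 4$ linear system~(\ref{eq:mat_lam2}).

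For the forward implication ($F_2 \Rightarrow A(\tilde{\mathbf{v}})\neq 0$ and $\lambda=-4$), I would first exploit similarity invariance to place the input in a normal form. Because the problem~(\ref{eq:mat_lam2}) is equivariant under scaled Euclidean transformations of the ambient plane, we can translate $\tilde{v}_c$ to the origin and orient and scale the frame so that $\tilde{v}_a$ lies at $(1,0)$. For a right isosceles triangle, with the right angle necessarily at the fixed vertex $c$ (the only placement compatible with the symmetry left after fixing $v_c$), this forces $\tilde{v}_b = (0,\sign(A^*(\tilde{\mathbf{v}})))$ up to reflection. The signed area in this normal form is $\sign(A^*(\tilde{\mathbf{v}}))/2$, which is nonzero, so $A(\tilde{\mathbf{v}})\neq 0$ follows automatically. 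Next, I would follow the recipe of Section~\ref{sec:case1}: multiply~(\ref{eq:mat_lam2}) by the adjugate of $X$ to express $\mathbf{u}$ as a vector polynomial in $\lambda$ scaled by $\det(X)=(\lambda^2-16)^2$, substitute into the constraint $sA^*(\mathbf{v})-A_o=0$, and obtain a depressed quartic in $\lambda$. Inserting the normal form and using $s\sign(A^*(\tilde{\mathbf{v}}))=-1$ should collapse Cardano's discriminant $Q_1^3+Q_2^2$ to zero, exactly as in Lemma 2, so that Ferrari's formula~(\ref{eq:roots}) delivers a double real root $\lambda=-4$ together with a complex-conjugate pair. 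Retaining only real roots leaves $\lambda=-4$.

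For the reverse implication ($F_2 \Leftarrow A(\tilde{\mathbf{v}})\neq 0$ and $\lambda=-4$), I would substitute $\lambda=-4$ into the scalar equation obtained by imposing the area constraint after solving~(\ref{eq:mat_lam2}). At $\lambda=-4$ the determinant vanishes, so the derivation proceeds through the adjugate rather than through $X^{-1}$, and the constraint reduces to a relation of the form $s\sign(A^*(\tilde{\mathbf{v}}))\cdot 4\cdot A(\tilde{\mathbf{v}}) + V(\tilde{\mathbf{v}}) = 0$, where $V(\tilde{\mathbf{v}})$ is a nonnegative quadratic form in the moving-vertex coordinates relative to $\tilde{v}_c$, acting as the analogue of $\sigma^2(\tilde{\mathbf{v}})$ in Lemma 2. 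Because $V\ge 0$ and $A(\tilde{\mathbf{v}})\neq 0$, the equation is consistent only when $s\sign(A^*(\tilde{\mathbf{v}}))=-1$. Applying the same similarity normalisation as above, the residual relation should rearrange into a sum of squares $(\tilde{x}_b' - \alpha)^2 + (\tilde{y}_b' + \beta s)^2 = 0$ for suitable constants $\alpha,\beta$, pinning $\tilde{v}_b'$ to the single point that, together with $\tilde{v}_a'=(1,0)$ and $\tilde{v}_c'=(0,0)$, forms a right isosceles triangle with right angle at $c$ and orientation opposite to $s$. This is setting $F_2$.

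The main obstacle I expect is producing the depressed quartic for the restricted system from scratch, since the excerpt carries out the explicit derivation only for the full $6\times 6$ case. Computing the $4\times 4$ adjugate, substituting into $A^*(\mathbf{v})$, and correctly identifying the quadratic form $V(\tilde{\mathbf{v}})$ that replaces $\sigma^2(\tilde{\mathbf{v}})$ when $v_c$ is locked, all demand careful algebraic bookkeeping. Once these coefficients are in hand, the specialisation to the right isosceles normal form and the invocation of Cardano's and Ferrari's formulae proceed line by line as in Lemma 2.
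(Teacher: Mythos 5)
Your proposal is correct and follows essentially the same route as the paper: normalise by a similarity taking the fixed vertex to the origin and $\tilde{v}_a$ to $(1,0)$, specialise the restricted depressed quartic so that Cardano's discriminant vanishes and Ferrari's formula yields the double real root $\lambda=-4$ plus a complex-conjugate pair, and for the converse reduce the constraint at $\lambda=-4$ to $4s\sign(A^*(\tilde{\mathbf{v}}))A(\tilde{\mathbf{v}})+D_o(\tilde{\mathbf{v}})=0$ and a sum of squares $\tilde{x}_b'^2+(\tilde{y}_b'+s)^2=0$. The ``main obstacle'' you anticipate is already cleared in the paper: the restricted quartic coefficients are derived explicitly in the one-fixed-vertex Case I section, and the quadratic form $V$ you posit is exactly $D_o(\tilde{\mathbf{v}})$, the sum of squared distances from the moving vertices to the fixed one.
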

\begin{lem}
$F_3 \Rightarrow A(\tilde{\mathbf{v}}) \neq 0$ and $\lambda \in \bigg\{ 4, -4+2\sqrt{\frac{2}{A_o}} , -4-2\sqrt{\frac{2}{A_o}}\bigg\}$.
\end{lem}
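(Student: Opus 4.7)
The plan is to mirror the proof of Lemma~3 from the three-free-vertex case, adapted to the reduced $4\times 4$ system~(\ref{eq:mat_lam2}) arising when $v_c$ is held fixed.

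First, I would exploit the invariances of the reduced problem, namely translation of the coordinate frame and rotation about the fixed vertex, to place $\tilde{v}_c$ at the origin and align the remaining input vertices along canonical axes. Under the hypothesis $F_3$, the input is a right isosceles triangle and $s\sign(A^*(\tilde{\mathbf{v}}))=1$, which suggests the canonical form with the right angle at $\tilde{v}_c$ and equal legs of length $\ell>0$ along the axes, for example $\tilde{v}_a=(\ell,0)$ and $\tilde{v}_b=(0,s\ell)$. This gives $A^*(\tilde{\mathbf{v}})=s\ell^2/2\neq 0$, proving the first part of the claim. A preliminary step is to verify that alternative placements of the right angle (at $\tilde{v}_a$ or $\tilde{v}_b$) either reduce to the same stationarity equation in $\lambda$ or are excluded by the sign condition $s\sign(A^*(\tilde{\mathbf{v}}))=1$.

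Second, I would substitute this canonical input into system~(\ref{eq:mat_lam2}) combined with the signed-area constraint $sA^*(\mathbf{v})-A_o=0$. Multiplying by the adjugate $X^*$ gives $\det(X)\mathbf{u}=X^*\mathbf{b}$ with $\det(X)=(\lambda^2-16)^2$, so the free vertices are rational in $\lambda$. Substituting these rational expressions into the signed-area constraint and clearing denominators by $(\lambda^2-16)^2$ produces a polynomial equation in $\lambda$ analogous to~(\ref{eq:quart}) but of lower degree because only four unknowns remain. I expect the polynomial to factor as $(\lambda-4)Q(\lambda)=0$ with $Q$ a quadratic whose coefficients depend only on $A_o$ and $\ell^2$, the root $\lambda=4$ being the Case~II root predicted by proposition~\ref{prop:main2}.

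Third, after solving $Q(\lambda)=0$, the roots should simplify, under the normalization induced by $\ell$, to $\lambda = -4\pm 2\sqrt{2/A_o}$. The mixed condition $A(\tilde{\mathbf{v}})/4\geq A_o$ is precisely what guarantees that these two roots are real rather than complex, exactly paralleling the role of the analogous condition in Lemma~3. Combining with $\lambda=4$ yields the claimed three-element set.

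The main obstacle I anticipate is the multiplicity of canonical forms for a right isosceles triangle with one fixed vertex: the right angle may a priori lie at $\tilde{v}_a$, $\tilde{v}_b$, or $\tilde{v}_c$, and the equal legs may be any two sides. I expect to resolve this by arguing that only the placement with right angle at $\tilde{v}_c$ is compatible with both $s\sign(A^*(\tilde{\mathbf{v}}))=1$ and the structure of the Lagrange equations at $|\lambda|=4$, the other placements either failing to satisfy $\det(X)=0$ at the relevant $\lambda$ or being ruled out by sign. Once the canonical form is fixed, the remainder of the proof is a direct (if tedious) polynomial computation analogous to the one carried out explicitly in the proof of Lemma~3.
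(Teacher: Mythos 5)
Your overall route is the paper's: use the similarity invariance to put the fixed vertex at the origin with the right angle there and normalised legs (the paper takes $\tilde{\mathbf{v}}'=[1,0,0,\sign(A^*(\tilde{\mathbf{v}})),0,0]^\top$, i.e.\ your $\ell=1$; the clarification following Proposition~\ref{prop:main2} already pins the right angle to the fixed vertex, so the case analysis you anticipate over alternative placements is settled by hypothesis rather than by the Lagrange equations), then eliminate the free vertices and read off the admissible $\lambda$. The only real difference in execution is that the paper does not factor the resulting polynomial directly but feeds it to Ferrari/Cardano, observes $\sqrt{Q_1^3+Q_2^2}=0$, extracts the resolvent-cubic root $\alpha_o=32$, and obtains the four roots from there.

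Two of your anticipated intermediate facts are wrong, and one of them describes a step that would fail as literally planned. First, eliminating down to two free vertices does \emph{not} lower the degree: $\det(X)=(\lambda^2-16)^2$ is still quartic in $\lambda$ and the elimination yields the quartic~(\ref{eq:quart2}). For the canonical $F_3$ input ($sA^*(\tilde{\mathbf{v}}')=\tfrac{1}{2}$, $D_o(\tilde{\mathbf{v}}')=2$) it factors as
\begin{equation*}
(\lambda-4)^2\left(A_o(\lambda+4)^2-8\right)=0,
\end{equation*}
so $\lambda=4$ is a \emph{double} root and your proposed factorisation $(\lambda-4)Q(\lambda)$ with $Q$ quadratic cannot equal the polynomial you actually obtain; the three-element set in the statement comes from a quartic with a repeated root, not from a cubic. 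Second, the roots $-4\pm 2\sqrt{2/A_o}$ are real for \emph{every} $A_o>0$, so the hypothesis $A(\tilde{\mathbf{v}})/4\geq A_o$ is not what makes them real and plays no role in determining the root set of this lemma; its purpose is to ensure that the Case~II family exists ($\rho\in\mathbb{R}$) and is optimal, which is the content of Lemmas~10 and~11. Carrying out the computation would correct the first error automatically, but as written the plan asserts the wrong polynomial structure and misattributes the function of the area condition.
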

\begin{lem}
$F_3 \Leftarrow A(\tilde{\mathbf{v}}) \neq 0$ and $\lambda = 4$.
\end{lem}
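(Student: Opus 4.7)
The plan is to imitate the proof of Lemma 4 for the general case ($S_3$), adapting every step to the one-fixed-vertex setting. We want the reverse implication: assuming $A(\tilde{\mathbf{v}}) \neq 0$ and $\lambda = 4$, we must conclude that $\tilde{\mathbf{v}}$ is a right isosceles triangle, that $s\sign(A^*(\tilde{\mathbf{v}}))=+1$, and that $A(\tilde{\mathbf{v}})/4 \geq A_o$.

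First, I would derive the analog of equation~(\ref{eq:subconsiar}) in the one-fixed-vertex setting. Multiplying the area constraint $sA^*(\mathbf{v})-A_o=0$ by $\det(X)=(\lambda^2-16)^2$ and substituting the expressions for $\mathbf{u}$ coming from the adjugate of the $4\times4$ matrix $X$ in~(\ref{eq:mat_lam2}) produces a polynomial identity in $\lambda$ whose coefficients depend on $\tilde{\mathbf{u}}$, $\tilde{v}_c$, $s$ and $A_o$. Setting $\lambda = 4$ annihilates the leading factor $(\lambda^2-16)$ from the right-hand side after the adjugate substitution and collapses the identity to a clean relation of the form $\kappa_1\,s\,A^*(\tilde{\mathbf{v}}) - \kappa_2\,\mu(\tilde{\mathbf{v}}) = 0$, where $\mu(\tilde{\mathbf{v}})$ is a nonnegative quadratic moment of the moving vertices with respect to the fixed vertex. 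Because $A(\tilde{\mathbf{v}})\neq 0$ and $\mu(\tilde{\mathbf{v}})>0$, the only way this identity holds is with $s\sign(A^*(\tilde{\mathbf{v}}))=+1$, which already gives the orientation part of $F_3$.

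Second, to pin down the geometry, I would apply a similarity transformation placing $\tilde{v}_c$ at the origin and $\tilde{v}_a$ on the positive $x$-axis at unit distance, so that $\tilde{v}_b=(\tilde{x}_b',\tilde{y}_b')$ is the only remaining free point. Substituting the explicit expressions of $A^*(\tilde{\mathbf{v}}')$ and of $\mu(\tilde{\mathbf{v}}')$ in the collapsed identity, and then completing the square in $\tilde{x}_b'$ and $\tilde{y}_b'$, I expect the equation to rearrange into the degenerate circle
\begin{equation*}
    \left(\tilde{x}_b' - a\right)^2 + \left(\tilde{y}_b' - b\,s\right)^2 = 0,
\end{equation*}
for explicit constants $a,b$. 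This pins $\tilde{v}_b$ to a single point, and the resulting configuration $\tilde{v}_c=(0,0)$, $\tilde{v}_a=(1,0)$, $\tilde{v}_b=(a,bs)$ must then be verified to be a right isosceles triangle, with the right angle at $\tilde{v}_c$ (consistent with the fact that only the vertex $v_c$ is fixed). This mirrors exactly how the analogous circle equation in the proof of the general Lemma 4 pinned an equilateral triangle.

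Finally, the inequality $A(\tilde{\mathbf{v}})/4\geq A_o$ would follow from reality of the additional quartic roots, exactly as in Lemma 3 for the general case: the resolvent of the one-fixed-vertex quartic has $\lambda=4$ as a root, and the other roots (analogous to $-4\pm 2\sqrt{2/A_o}$ claimed in Lemma 3 of proposition~\ref{prop:main2}) are real precisely when $A(\tilde{\mathbf{v}})/4 \geq A_o$. The main obstacle I anticipate is algebraic bookkeeping: the right-hand side $\mathbf{b}$ of~(\ref{eq:mat_lam2}) depends on $\tilde{v}_c$, so the adjugate substitution produces more cross-terms than in the fully-free case, and the perfect-square identity defining the single-point circle must be verified by a careful computation that cannot be reused verbatim from Lemma 4 of the general case.
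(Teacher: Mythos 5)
Your proposal follows essentially the same route as the paper's proof: substitute $\lambda=4$ into the collapsed area relation to get $4s\sign(A^*(\tilde{\mathbf{v}}))A(\tilde{\mathbf{v}})-D_o(\tilde{\mathbf{v}})=0$, deduce $s\sign(A^*(\tilde{\mathbf{v}}))=1$ from positivity of the quadratic moment, then apply the similarity transformation fixing $\tilde{v}_c$ at the origin and $\tilde{v}_a$ at $(1,0)$ and complete the square to obtain the degenerate circle $x_b'^2+(y_b'-s)^2=0$, pinning a right isosceles triangle with the right angle at the fixed vertex. The algebraic bookkeeping you worry about is already done in the paper's Case I derivation of the quartic~(\ref{eq:quart2}), and the paper treats the condition $A(\tilde{\mathbf{v}})/4\geq A_o$ even more loosely than you do (it simply notes the conclusion holds for any $A(\tilde{\mathbf{v}})$), so your plan is, if anything, slightly more careful on that point.
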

\begin{lem}
$\lambda=4$ leads to the optimal solution for $F_3$.
\end{lem}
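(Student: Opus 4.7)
The plan is to follow the same strategy used in the proof of Lemma 5 for the unrestricted case. First I would apply a similarity transformation to bring the input right isosceles triangle into a canonical form, placing the fixed vertex $\tilde{v}_c$ at the origin and the two free vertices at $(1,0)$ and $(0,\sign(A^*(\tilde{\mathbf{v}})))$; this assumes the right angle sits at the fixed vertex, which is the configuration singled out by Lemmas 1--4 in the restricted case. Since a similarity transformation scales the cost by a common positive factor and leaves the identity of the fixed vertex intact, any cost inequality established in the canonical frame transfers to the original frame. In the canonical frame we have $A^*(\tilde{\mathbf{v}}') = \sign(A^*(\tilde{\mathbf{v}}))/2$, so I would reparametrize the prescribed area as $A_o = 1/(2z)$ with $z>0$, under which the admissibility condition $A(\tilde{\mathbf{v}})/4 \geq A_o$ of $F_3$ becomes $z \geq 4$.

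Substituting $A_o = 1/(2z)$ into the root set given by Lemma 3 yields the Case I roots $\lambda = -4 + 4 s_3\sqrt{z}$ with $s_3 \in \{-1,1\}$ and the Case II root $\lambda = 4$. For Case I, I would feed each root back into the linear system~(\ref{eq:mat_lam2}) to obtain closed-form expressions $\mathbf{u}_1(z,s_3)$ for the free vertices. For Case II, where $\det(X) = 0$, I would follow the template of section~\ref{sec:case2} restricted to the $4\times 4$ reduced system: compute the null space of $X$, a particular solution via the pseudo-inverse, impose the signed area constraint, and finally select the optimal rotation of the resulting one-parameter family to obtain $\mathbf{u}_2(z)$. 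Plugging each solution into $\mathscr{C}(\mathbf{v}) = \|\mathbf{u}-\tilde{\mathbf{u}}\|^2$ gives the two candidate costs $\mathscr{C}_1(z,s_3)$ and $\mathscr{C}_2(z)$ as rational functions of $z$.

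The concluding step is to verify that $\mathscr{C}_1(z,s_3) \geq \mathscr{C}_2(z)$ on $z \geq 4$ for both values of $s_3$. Following the unrestricted proof, after substituting $a = \sqrt{z}$, I expect the difference to reduce to an always-nonnegative quadratic in $a$ (an upward-opening parabola with nonpositive discriminant, or more simply a visible sum of squares). The hard part will be the Case II bookkeeping: fixing $v_c$ breaks the centroid symmetry that the main-case proof relies on, so the particular solution and the optimal rotation couple asymmetrically to the fixed coordinates $(\tilde{x}_c,\tilde{y}_c)=(0,0)$ in the canonical frame and to the relative orientation encoded by $s_3$. Verifying that, despite this coupling, the cost difference $\mathscr{C}_1 - \mathscr{C}_2$ factors as a manifestly nonnegative expression is the delicate point; once this is done, the lemma follows by invariance of the inequality under the similarity transformation, and together with Lemma 3 it proves that $\lambda = 4$ is always the optimal choice in setting $F_3$.
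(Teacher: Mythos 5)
Your plan coincides with the paper's own proof of this lemma: the same canonical placement of the right isosceles triangle with the fixed (right-angle) vertex at the origin, the same reparametrisation $A_o=\tfrac{1}{2z}$, the same substitution of the roots $\lambda=-4+4s_3\sqrt{z}$ and $\lambda=4$ into the Case I and Case II machinery, and the same final cost comparison. The ``delicate point'' you flag resolves exactly as you anticipate --- the paper obtains $\mathscr{C}_1(\mathbf{v}')=\tfrac{2(\sqrt{z}-s_3)^2}{z}$ and $\mathscr{C}_2(\mathbf{v}')=1-\tfrac{2}{z}$, whose difference reduces to the manifestly nonnegative $\tfrac{(\sqrt{z}-2s_3)^2}{2z}$.
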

\noindent The proofs of these lemmas are given in Appendix~\ref{sec:lemproofspec}. It is important to clarify for proposition 2, that in the right isosceles triangle, the fixed vertex corresponds to the one opposite to the triangle's hypotenuse. 
\begin{proof}[Proof of proposition 2]
We recall that Case I occurs for $|\lambda| \neq 4$ and Case II for $|\lambda| = 4$. Lemmas 7, 8 and 10 show that $F_1$, $F_2$ and $F_3$ are the only possible settings corresponding to $|\lambda| = \lambda_o$, hence possibly to Case II. This proves that Case I is the general case. Lemmas 7 and 8 then trivially prove that $F_1$ and $F_2$ are handled by Case II. Finally, lemmas 9 and 11 prove that $F_3$ is also handled by Case II. 
\end{proof}

\subsubsection{Case I}
This case occurs for $|\lambda| \neq 4$. In other words, this is the case where at least one of the initial vertices in $\tilde{\mathbf{u}}$ is different from the other two and where the rank of the input matrix $M$ in equation (\ref{eq:Mmat}) is $\rank(M)>1$. Similarly to Case I for three moving vertices, the problem is reformulated as a depressed quartic polynomial and the roots of this polynomial are found using  Ferrari's method. 
We start by multiplying equation~(\ref{eq:mat_lam2}) by the adjugate $X^*$ of $X$ and obtain:
\begin{equation}
    \det(X)\mathbf{u} = X^*\mathbf{b},
    \label{eq:mateqadjb}
\end{equation}
where the adjugate is:
\begin{equation}
	X^*=\delta Y =\lambda^2-16
	\begin{bmatrix}
        -4 & 0 & 0 & s\lambda\\ 
        0 & -4 & -s\lambda\ & 0\\ 
        0 & -s\lambda\ & -4 & 0\\ 
        s\lambda\ & 0 & 0 & -4
	\end{bmatrix},
	\label{eq:adjY2}
\end{equation}
with $\delta = \lambda^2-16$ and $Y\in\mathbb{R}^{4\times4}$. We note that $\det(X) = \delta^2$. We substitute this and equation~(\ref{eq:adjY2}) in equation~(\ref{eq:mateqadjb}) and obtain:
\begin{equation}
    \delta\mathbf{u} = Y\mathbf{b}.
    \label{eq:mateqadj2b}
\end{equation}
We observe that the signed area $A^*(\delta\mathbf{v}) = \delta^2 A^*(\mathbf{v})$. Also that $\delta\mathbf{v} = [\mathbf{u}, \delta\tilde{x}_c,\delta\tilde{y}_c]$.
Thus, we calculate the signed area of $\delta\mathbf{v}$ and obtain after some minor expanding:
\begin{equation}
    \delta^2 A^*(\mathbf{v}) = a_2 \lambda^2 + a_1 \lambda + a_o,
    \label{eq:areayv2}
\end{equation}
where:
\begin{align}
	a_0  &=  128((\tilde{x}_a-\tilde{x}_c)(\tilde{y}_b-\tilde{y}_a)-(\tilde{x}_a-\tilde{x}_b)(\tilde{y}_c-\tilde{y}_a)) \\
	a_1  &= - 32\left(\tilde{x}_a^2+\tilde{x}_b^2 + 2\tilde{x}_c^2
	+\tilde{y}_a^2+\tilde{y}_b^2 +2\tilde{y}_c^2\right) \\
	& \quad + 64(\tilde{x}_a\tilde{x}_b+\tilde{x}_a\tilde{x}_c+\tilde{x}_b\tilde{x}_c+ \tilde{y}_a\tilde{y}_b+\tilde{y}_a\tilde{y}_c+\tilde{y}_b\tilde{y}_c) \\
	a_2  &= 8((\tilde{x}_a-\tilde{x}_c)(\tilde{y}_b-\tilde{y}_a)-(\tilde{x}_a-\tilde{x}_b)(\tilde{y}_c-\tilde{y}_a)).
\end{align}
We rewrite these coefficients more compactly. Concretely, $a_0$ and $a_2$ contain the signed area $A^*(\tilde{\mathbf{v}})$ of the input vertices and $a_1$ contains the sum of the squared distances $D_o$ of the two moving vertices to the fixed vertex:
\begin{equation}
	D_o(\tilde{\mathbf{v}}) = (\tilde{x}_a-\tilde{x}_c)^2 + (\tilde{y}_a-\tilde{y}_c)^2 + (\tilde{x}_b-\tilde{x}_c)^2 + (\tilde{y}_b-\tilde{y}_c)^2 .
\end{equation}
We substitute equation~(\ref{eq:areayv2}) in the signed area constraint~(\ref{eq:sigconstarea}) multiplied by $\delta^2$ and obtain:
\begin{equation}
    sA^*(\mathbf{v}) - \delta^2 A_o = 0.
    \label{eq:subconsiar2}
\end{equation}
This way, the signed area only depends on the known initial vertices $\tilde{\mathbf{v}}$ and prescribed sign $s$. Because the signed area is quadratic in the vertices, and the vertices are quadratic rational in $\lambda$, the resulting equation is a quartic in $\lambda$:
\begin{equation}
	A_o\lambda^4 - 16(2A_o+sA^*(\tilde{\mathbf{v}}))\lambda^2 + 32 D_o(\tilde{\mathbf{v}})\lambda +256(A_o - s A^*(\tilde{\mathbf{v}})) = 0.
	\label{eq:quart2}
\end{equation}
This is a depressed quartic because it does not have a cubic term. 
We can thus rewrite it to the standard form by simply dividing by $A_o$, giving:
\begin{equation}
	\lambda^4 + p\lambda^2 + q\lambda + r = 0,
	\label{eq:depquar2}
\end{equation}
with:
\begin{align}
	p &= - \frac{16(2A_o + sA^*(\tilde{\mathbf{v}}))}{A_o} \\
	q &=  \frac{32D_o(\tilde{\mathbf{v}})}{A_o} \\
	r &=  \frac{256(A_o- sA^*(\tilde{\mathbf{v}}))}{A_o}.
\end{align}
This can be solved using Ferrari's method. 
We observe that when $\rank(M) = 2$, implying $A^*(\tilde{\mathbf{v}})=0$, coefficients $p$ and $r$ become constants. However, the equation remains a general depressed quartic which can be solved with our procedure. 

\subsubsection{Case II}
\label{sec:case2spec}
Case II occurs for $|\lambda| = 4$. From proposition 2, this means that the initial vertices $\tilde{\mathbf{v}}$ are colocated as $\tilde{v}_a=\tilde{v}_b=\tilde{v}_c$ or represent right isosceles triangles under conditions from proposition 2. We show that the problem is represented by translated homogeneous and linearly dependent equations. We find their null space, particular solution and then a subset constrained by the prescribed area.

We first translate the coordinate system to bring the fixed vertex to the origin $\tilde{\mathbf{u}}'=\tilde{\mathbf{u}}'-v_c$ translating the unknown vertices to $\mathbf{u}'=\mathbf{u}-v_c$. 
Substituting $|\lambda| =  4$ in matrix $X$, we obtain:
\begin{equation}
    4
	\begin{bmatrix}
	1 &  0 &  0 & \sign(\lambda)s \\
	0 &  1 & -\sign(\lambda)s & 0 \\
	0 & -\sign(\lambda)s &  1 & 0 \\
	\sign(\lambda)s &  0 &  0 & 1 
	\end{bmatrix} .
	\label{eq:mat_lamcas2spec}
\end{equation}
This matrix has a rank of two and is thus non-invertible. Thus, $X\mathbf{u}' = \tilde{\mathbf{u}}'$ is solvable if and only if $\tilde{\mathbf{u}}'$ lies in the column space $C(X)$. The column space can be calculated by factoring $X$ into its singular value decomposition (SVD) $X = W\Sigma W^\top$ and taking the first $\rank(X)$ columns of the unitary matrix $W$.
For each value of $s\sign(\lambda)$, we have column spaces expressed as two-dimensional linear subspaces $\{\gamma_1\mathbf{w}_1^- +\gamma_2\mathbf{w}_2^-\}$ and $\{\gamma_1\mathbf{w}_1^+ +\gamma_2\mathbf{w}_2^+ \}$ where $\gamma_1,\gamma_2 \in \mathbb{R}$ and with bases $\mathbf{w}_1^-,\mathbf{w}_2^- \in \mathbb{R}^4$ and $\mathbf{w}_1^+,\mathbf{w}_2^+ \in \mathbb{R}^4$ such that:
\begin{equation}
	\begin{bmatrix}
		\mathbf{w}_1^- & \mathbf{w}_2^-
	\end{bmatrix} = 
	\begin{bmatrix}
		                  0 &   \frac{1}{\sqrt{2}} \\
		 \frac{1}{\sqrt{2}} &                    0 \\
         \frac{1}{\sqrt{2}} &                    0 \\
		                  0 &   -\frac{1}{\sqrt{2}}
	\end{bmatrix},
\end{equation}
and:
\begin{equation}
	\begin{bmatrix}
		\mathbf{w}_1^+ & \mathbf{w}_2^+
	\end{bmatrix} = 
	\begin{bmatrix}
		                  0 &   \frac{1}{\sqrt{2}} \\
		 \frac{1}{\sqrt{2}} &                    0 \\
        -\frac{1}{\sqrt{2}} &                    0 \\
		                  0 &    \frac{1}{\sqrt{2}}
	\end{bmatrix},
\end{equation}
We have that $\mathbf{w}_1^-,\mathbf{w}_2^-,\mathbf{w}_1^+$ and $\mathbf{w}_2^+$ represent right isosceles triangles of the same area of $\frac{1}{4}$ with orientation $s\sign(\lambda)$. The linear combinations $\gamma_1\mathbf{w}_1^- +\gamma_2\mathbf{w}_2^-$ and $\gamma_1\mathbf{w}_1^+ +\gamma_2\mathbf{w}_2^+$ represent right isosceles triangles of any area and opposite orientations (or colocated points if $\gamma_1 = \gamma_2 = 0$). 
This shows that the system is solvable if and only if $\tilde{\mathbf{v}}'$ represents a right isosceles triangle of orientation $\sign(A^*(\tilde{\mathbf{v}}')) = s\sign(\lambda)$ or colocated vertices.

The system $X\mathbf{u}' = \tilde{\mathbf{u}}'$ is solved by first finding the solutions of the homogeneous system $X\mathbf{u}_h = 0$ and translating them by a particular solution $\mathbf{u}_p$, obtaining $\mathbf{u}'=\mathbf{u}_h + \mathbf{u}_p$. The homogeneous system has an infinite number of solutions which come from the null space of $X$. This can be represented as a two-dimensional linear subspace $\mathbf{u}_h= \beta_1\mathbf{u}_1 +\beta_2\mathbf{u}_2$ where the coefficients $\beta_1,\beta_2 \in \mathbb{R}$ are not both zero and with bases $\mathbf{u}_1,\mathbf{u}_2 \in \mathbb{R}^4$ such that:
\begin{equation}
	\begin{bmatrix}
		\mathbf{u}_1 & \mathbf{u}_2
	\end{bmatrix} = 
	\begin{bmatrix}
		0 & -s\sign(\lambda) \\ s\sign(\lambda) & 0 \\ 1 & 0 \\ 0 & 1 
	\end{bmatrix}.
	\label{eq:family2}
\end{equation}
We have that $\mathbf{u}_1$ and $\mathbf{u}_2$ represent two pairs of vertices which form right isosceles triangles when adding one third vertex in the origin of the same area $\frac{1}{4}$. The linear combination of $\mathbf{u}_h=\beta_1\mathbf{u}_1 +\beta_2\mathbf{u}_2$ generate right isosceles triangles of any area when adding one third vertex in the origin of orientation $-s\sign(\lambda)$. We then calculate the particular solution $\mathbf{u}_p$ using the pseudo-inverse as:
\begin{equation}
    \mathbf{u}_p = X^\dagger\tilde{\mathbf{u}}' = \frac{1}{4}
    \begin{bmatrix}
		\tilde{x}_a' + \sign(A^*(\tilde{\mathbf{v}}'))\tilde{y}_b' \\ 
		\tilde{y}_a' - \sign(A^*(\tilde{\mathbf{v}}'))\tilde{x}_b' \\ 
		\tilde{x}_b' - \sign(A^*(\tilde{\mathbf{v}}'))\tilde{y}_a'\\ 
		\tilde{y}_b' + \sign(A^*(\tilde{\mathbf{v}}'))\tilde{x}_a'  
	\end{bmatrix}.
\end{equation}
We then translate the null space with the particular solution and obtain $\mathbf{u}' = \beta_1\mathbf{u}_1 +\beta_2\mathbf{u}_2 + \mathbf{u}_p$. This linear combination represents pairs of vectors (which are perpendicular only if $\mathbf{u}_p=0$).

The next step is to constrain these to the prescribed area and orientation. We have $\mathbf{v}'= [ \mathbf{u}',0,0] \in \mathbb{R}^6$. After some minor algebraic manipulations, we obtain the signed area as:
\begin{equation}
    A^*(\mathbf{v}') = \left(1 + s\sign(A^*(\tilde{\mathbf{v}}'))\sign(\lambda)\right)\frac{A^*(\tilde{\mathbf{v}}')}{4} - s\sign(\lambda)\frac{\beta_1^2 + \beta_2^2}{2}.
\end{equation}
When $A^*(\tilde{\mathbf{v}}')\neq0$ we have $s\sign(\lambda) = \sign(A^*(\tilde{\mathbf{v}}'))$ thus $\sign(\lambda) = s\sign(A^*(\tilde{\mathbf{v}}'))$. 
However, when $A^*(\tilde{\mathbf{v}}')=0$ we have $\sign(A^*(\mathbf{v}'))=-s\sign(\lambda)$, which implies that $\sign(\lambda)=-s$. Using the orientation constraint~(\ref{eq:orconst}), we can express $\mathbf{u}'$ as:
\begin{equation}
\begin{aligned}
    \qquad & \mathbf{u}' = \beta_1\mathbf{u}_1 + \beta_2\mathbf{u}_2 + \mathbf{u}_p \\
    \text{s.t.} \qquad&  \left(s + \sign(\lambda)\sign(A^*(\tilde{\mathbf{v}}'))\right)\frac{A^*(\tilde{\mathbf{v}}')}{4} - \sign(\lambda)\frac{(\beta_1^2 + \beta_2^2)}{2} - A_o = 0
\end{aligned}
\end{equation}
Because of the area and orientation constraints, and because $\mathbf{u}_1$ and $\mathbf{u}_2$ are rotated copies of each other, the family defined by equation~(\ref{eq:family2}) can be generated by simply rotating $\mathbf{u}_1$ by
\begin{equation}
    \rho = \sqrt{\beta_1^2+\beta_2^2} = \sqrt{\frac{\sign(A^*(\tilde{\mathbf{v}}'))A^*(\tilde{\mathbf{v}}')-4kA_o}{2}}
\end{equation}
where $k$ depends on the type of input:
\begin{equation}
    k = \begin{cases}
s\sign(A^*(\tilde{\mathbf{v}}')) & \text{if $A^*(\tilde{\mathbf{v}})\neq0$}\\
-1 &\text{if $A^*(\tilde{\mathbf{v}})=0$}
\end{cases}
\end{equation}
so that the area constraint is met and then rotate $\mathbf{u}_1$ by some arbitrary angle $\theta$. 
We note that when $k=s\sign(A^*(\tilde{\mathbf{v}}'))=1$, then $\rho \in \mathbb{R}$ as long as $A(\tilde{\mathbf{v}})/4\geq A_o$ (which corresponds to setting $F_3$).
We define a new basis vector $\mathbf{u}_c$ as:
\begin{equation}
    \mathbf{u}_c = \rho
    \begin{bmatrix}
    0 & ks & 1 & 0 
    \end{bmatrix}.
\end{equation}
Finally the triangle vertices are translated to the original coordinates by adding the fixed vertex $v_c$, we obtain:
\begin{equation}
    \mathbf{v} =  \begin{bmatrix}\mathcal{R}(\theta)\mathbf{u}_c+\mathbf{u}_p & 0 & 0 \end{bmatrix}^\top + \begin{bmatrix} v_c & v_c & v_c \end{bmatrix}^\top
    \label{eq:case2b}
\end{equation}
where $\mathcal{R}(\theta)$ is a block diagonal matrix replicating the 2D rotation matrix $R(\theta)$ two times as $\mathcal{R}=\mydiag(R(\theta),R(\theta))$. All the possible solutions form triangles and have the same cost.

\subsubsection{Proof of Lemmas}
\label{sec:lemproofspec}
\begin{proof}[Proof of Lemma 7]
We start with the forward implication: $F_1 \Rightarrow A(\tilde{\mathbf{v}})= 0$ and $|\lambda|=4$.
In $F_1$, $\tilde{\mathbf{v}}$ represents a single point. This implies $A(\tilde{\mathbf{v}})=0$ and $D_o(\tilde{\mathbf{v}})=0$. Replacing these values in the depressed quartic equation (\ref{eq:depquar}) causes the coefficients $p$ and $r$ to become constants, and coefficient $q$ to vanish (also the orientation constraint vanishes).
The depressed quartic thus transforms into a bi-quadratic:
\begin{equation}
    \lambda^4 - 32\lambda^2 +256 = 0,
\end{equation}
whose solutions are:
\begin{equation}
    |\lambda| = 4. 
\end{equation}
We now turn to the reverse implication: $F_1 \Leftarrow A(\tilde{\mathbf{v}}) = 0$ and $|\lambda|=4$. We substitute $|\lambda| = 4$ in equation (\ref{eq:subconsiar2}), giving:
\begin{equation}
    4s\sign(A^*(\tilde{\mathbf{v}}))\sign(\lambda)|A^*(\tilde{\mathbf{v}})| - D_o(\tilde{\mathbf{v}})=0.
    \label{eq:contot2}
\end{equation}
Since $A(\tilde{\mathbf{v}}) = 0$, then the only solution that satisfies equation (\ref{eq:contot2}) for any given value of $s\sign(A^*(\tilde{\mathbf{v}}))\sign(\lambda)$ is with $D_o(\tilde{\mathbf{v}})=0$, which implies that the input triangle is collapsed into a single point, hence to $F_1$. 
\end{proof}

\begin{proof}[Proof of Lemma 8]
We start with the forward implication: $F_2 \Rightarrow A(\tilde{\mathbf{v}}) \neq 0$ and $\lambda=-4$. In $F_2$, $\tilde{\mathbf{v}}$ represents a right isosceles triangle and orientation inversion. This implies $A(\tilde{\mathbf{v}})\neq0$ and $s\sign(A^*(\tilde{\mathbf{v}}))=-1$. We perform a similarity transformation to $\tilde{\mathbf{v}}$ to bring the fixed vertex to the origin and another to the $x$-axis, scaled to normalise their  distance, giving $\tilde{\mathbf{v}}'=[1,0,0,\sign(A^*(\tilde{\mathbf{v}})),0,0]^\top$ 
where $\sign(A^*(\tilde{\mathbf{v}}))$ determines the orientation of the triangle.
We obtain $A^*(\tilde{\mathbf{v}}')=\frac{\sign(A^*(\tilde{\mathbf{v}}))}{2}$ and $D_o(\tilde{\mathbf{v}}')=2$, thus the coefficients of the depressed quartic equation~(\ref{eq:depquar2}) become:
\begin{align}
	p &= - \frac{32A_o - 8}{A_o} \\
	q &=  \frac{64}{A_o} \\
	r &=  \frac{256A_o + 128}{A_o}.
\end{align}
Substituting these coefficients in Cardano's formula we obtain:
\begin{align}
	Q_1 &= -\left(\frac{32A_o + 4}{3A_0}\right)^2 \\
	Q_2 &= \left(\frac{32A_o + 4}{3A_0}\right)^3,
\end{align}
making $\sqrt{Q_1^3+Q_2^2}=0$ and the real root $\alpha_o$ of Cardano's resolvent cubic to become:
\begin{equation}
    \alpha_o = 2\left(\frac{32A_o + 4}{3A_o}\right)+\frac{32A_o-8}{3A_o} = 32.
\end{equation}
We finally use $\alpha_o$ to extract the roots of the depressed quartic:
\begin{equation}
	\lambda = \frac{s_1\sqrt{32}\lambda_o+s_2\sqrt{8\left(\frac{1-s_1}{A_o}\right)}}{\sqrt{2}}
	\label{eq:rootsb}
\end{equation}
where $s_1,s_2 \in \{-1,1\}$. We thus have the following roots:
\begin{equation}
    \lambda \in \left\{ -4,-4,4-2i\sqrt{\frac{2}{A_o}},4+2i\sqrt{\frac{2}{A_o}}\right\}.
\end{equation}
Considering only the real roots we have $\lambda = -4$.

\noindent We now turn to the reverse implication: $F_2 \Leftarrow A(\tilde{\mathbf{v}}) \neq 0$ and $\lambda=-4$. We substitute $\lambda = -4$ in equation (\ref{eq:subconsiar2}), giving:
\begin{equation}
    -4s\sign(A^*(\tilde{\mathbf{v}}))A(\tilde{\mathbf{v}}) - D_o(\tilde{\mathbf{v}})=0.
    \label{eq:con1b}
\end{equation}
Since $A(\tilde{\mathbf{v}})\neq0$ and $D_o(\tilde{\mathbf{v}})>0$ then equation (\ref{eq:con1b}) can only be solved when $s\sign(A^*(\tilde{\mathbf{v}}))=-1$. We perform the same similarity transformation used at the beginning of the proof to the unknown input triangle $\tilde{\mathbf{v}}$ giving $\tilde{\mathbf{v}}'=[1,0,x_b',y_b',0,0]^\top$ where one of the vertices remains unknown. We then have $A^*(\tilde{\mathbf{v}}') = \frac{\tilde{y}_b'}{2}$ or \linebreak $\sign(A^*(\tilde{\mathbf{v}}'))A(\tilde{\mathbf{v}}') = \frac{\tilde{y}_b'}{2}$and $ D_o(\tilde{\mathbf{v}}') = x_b'^2 + y_b'^2 + 1$. which we substitute in equation (\ref{eq:con1b}) and obtain:
\begin{equation}
    x_b'^2+y_b'^2  + 2sy_b' + 1 = 0,
\end{equation}
which we rewrite as:
\begin{equation}
    x_b'^2 + \left(y_b'+s\right)^2 = 0.
\end{equation}
This is the equation of a single point, making $\tilde{\mathbf{v}}'$ a right isosceles triangle $\tilde{\mathbf{v}}'=[0,1,0,-s,0,0]^\top$ where $s$ determines the orientation of the triangle. Since $\tilde{\mathbf{v}}'$ was a similarity transformation of $\tilde{\mathbf{v}}$, then $\tilde{\mathbf{v}}$ is also a right isosceles triangle when $s\sign(A^*(\tilde{\mathbf{v}}))=-1$, which corresponds to $F_2$.
\end{proof}

\begin{proof}[Proof of Lemma 9]
In $F_3$, $\tilde{\mathbf{v}}$ represents a right isosceles triangle with $A(\tilde{\mathbf{v}})/4\geq A_o$ and no orientation inversion.
This implies $A^*(\tilde{\mathbf{v}})\neq0$ and $s\sign(A^*(\tilde{\mathbf{v}}))=1$. 
We perform the same similarity transformation to $\tilde{\mathbf{v}}$ as in lemma 7,
thus the coefficients of the depressed quartic equation~(\ref{eq:depquar}) become:
\begin{align}
	p &= - \frac{32A_o + 8}{A_o} \\
	q &=  \frac{32}{A_o} \\
	r &=  \frac{256A_o - 128}{A_o}.
\end{align}
Substituting these coefficients in Cardano's formula we obtain:
\begin{align}
	Q_1 &= -\left(\frac{32A_o-4}{3A_0}\right)^2 \\
	Q_2 &= -\left(\frac{32A_o-4}{3A_0}\right)^3,
\end{align}
making $\sqrt{Q_1^3+Q_2^2}=0$.
After some factoring we obtain the real root $\alpha_o$ of Cardano's resolvent cubic as:
\begin{equation}
    \alpha_o = -2\sqrt[3]{\left(
    \frac{32A_o -8 s A^*(\tilde{\mathbf{v}}')}{3A_o}\right)^3}+\frac{32A_o+16 s A^*(\tilde{\mathbf{v}}')}{3A_o}.
\end{equation}
Since $s\sign(A^*(\tilde{\mathbf{v}}))=1$ and $A^*(\tilde{\mathbf{v}}') = \sign(A^*(\tilde{\mathbf{v}}))A(\tilde{\mathbf{v}}')$, then $\alpha_o \in \mathbb{R}$ only when $A(\tilde{\mathbf{v}}')/4 \geq A_o$. Under this condition, we obtain $\alpha_o = 32$.
Substituting in equation (\ref{eq:roots}) we obtain:
\begin{equation}
     \lambda = \frac{s_1\sqrt{32}\lambda_o+s_2\sqrt{8\left(\frac{ 1-s_1}{A_o}\right)}}{\sqrt{2}}
\end{equation}
where $s_1,s_2 \in \{-1,1\}$. We thus have the following roots:
\begin{equation}
    \lambda \in \left\{-4-2\sqrt{\frac{2}{A_o}},-4+2\sqrt{\frac{2}{A_o}},4,4\right\}^\top.
    \label{eq:eqcase1b}
\end{equation}
Thus, we have two roots where $|\lambda|\neq 4$ (which correspond to solutions for Case I) and there exists at least one solution $\lambda = 4$ for $F_3$ (which correspond to the solution of Case II).
\end{proof}

\begin{proof}[Proof of Lemma 10]
We substitute $\lambda = 4$ in equation (\ref{eq:subconsiar2}), giving:
\begin{equation}
   4s\sign(A^*(\tilde{\mathbf{v}}))A(\tilde{\mathbf{v}}) - \sigma^2(\tilde{\mathbf{v}})=0.
    \label{eq:con2b}
\end{equation}
Since $A(\tilde{\mathbf{v}})\neq0$ then equation (\ref{eq:con2b}) can only be solved when $s\sign(A^*(\tilde{\mathbf{v}}))=1$. We perform the same similarity transformation to $\tilde{\mathbf{v}}$ as in lemma 8, substitute $\sign(A^*(\tilde{\mathbf{v}}'))A(\tilde{\mathbf{v}}')$ and $\sigma^2(\tilde{\mathbf{v}}')$ in equation (\ref{eq:con2b}) and obtain:
\begin{equation}
    x_b'^2+y_b'^2 - 2sy_b' + 1 = 0,
\end{equation}
which we rewrite as:
\begin{equation}
    x_b'^2 + \left(y_b'-s\right)^2 = 0.
\end{equation}
This is the equation of a single point making $\tilde{\mathbf{v}}'$ a right isosceles triangle $\tilde{\mathbf{v}}'=[0,1,0,s,0,0]^\top$ where $s$ determines the orientation of the triangle. Since $\tilde{\mathbf{v}}'$ was a similarity transformation of $\tilde{\mathbf{v}}$, then $\tilde{\mathbf{v}}$ is also a right isosceles triangle when $s\sign(A^*(\tilde{\mathbf{v}}))=1$ for any value $A(\tilde{\mathbf{v}})$ (including $A(\tilde{\mathbf{v}}')/4 \geq A_o$) which corresponds to $F_3$.
\end{proof}

\begin{proof}[Proof of Lemma 11]
We perform the same similarity transformation to $\tilde{\mathbf{v}}$ as in lemma 8 and obtain $A^*(\tilde{\mathbf{v}}')=\frac{\sign(A^*(\tilde{\mathbf{v}}))}{2}$. 
We relax the condition of $F_3$ where $\frac{A(\tilde{\mathbf{v}}')}{4} \geq A_o$ by reformulating $A_o = \frac{1}{2z}$ where $z$ is a scaling factor $z>0 \in \mathbb{R}$. We substitute this in equation~(\ref{eq:eqcase1b}) and extract the roots :
\begin{equation}
    \lambda = \left[-4-4\sqrt{z},-4+4\sqrt{z},4,4\right]^\top.
\end{equation}
We start with the solution given by Case I where $|\lambda|\neq 4$. We compact both values as $\lambda=-4+s_3 4\sqrt{z}$ where $s_3 \in \{-1,1\}$. We substitute this in equation (\ref{eq:mateqadj2b}) and obtain:
\begin{equation}
    \mathbf{u}' = \begin{bmatrix}
    \frac{s_3}{\sqrt{z}} & 0 & 0 & \frac{s_3}{\sqrt{z}} 
    \end{bmatrix}^\top.
\end{equation}
We calculate the cost of the solution of Case I by substituting $\mathbf{v}' = [\mathbf{u}', \tilde{x}_c,\tilde{y}_c]$ and $\tilde{\mathbf{v}}'$ in equation (\ref{eq:cost}) and obtain:
\begin{equation}
    \mathscr{C}_1(\mathbf{v}') = \frac{2(\sqrt{z}-s_3)^2}{z}
\end{equation}
Now we turn to the solution given by Case II where $\lambda = 4$. We calculate the basis vector $\mathbf{u}'_c$, the particular solution $\mathbf{u}'_p$ and substitute them in equation~(\ref{eq:case2b}) and obtain:
\begin{equation}
    \mathbf{v}' = \begin{bmatrix}
    \frac{1}{2} & 
    \frac{\sqrt{z-4}}{2\sqrt{z}} &  
    \frac{\sqrt{z-4}}{2\sqrt{z}} & 
    \frac{1}{2} & 0 & 0 
    \end{bmatrix}^\top.
\end{equation}
We calculate the cost of the solution of Case II by substituting this and $\tilde{\mathbf{v}}'$ in equation (\ref{eq:cost}) and obtain:
\begin{equation}
    \mathscr{C}_2(\mathbf{v}') = 1 - \frac{2}{z}
\end{equation}
We compare the cost of both solutions $\mathscr{C}_1(\mathbf{v}')\geq\mathscr{C}_2(\mathbf{v}')$ and obtain:
\begin{equation}
     \frac{2(\sqrt{z}-s_3)^2}{z} \geq 1 - \frac{2}{z}
\end{equation}
After some minor manipulations we obtain:
\begin{equation}
    \frac{z-4s_3\sqrt{z}+4}{2z} \geq 0
\end{equation}
We substitute $\sqrt{z}=a$ where $a>0 \in \mathbb{R}$ and obtain the quadratic expression:
\begin{equation}
    \frac{(a-4s_3)^2}{2a^2} \geq 0
\end{equation}
which represents an upward opening parabola which is always positive for any value of $a$ and thus $z$. This implies that $\mathscr{C}_1(\mathbf{v}')\geq\mathscr{C}_2(\mathbf{v}')$ for any value $z$ including $z>4$. Since $\tilde{\mathbf{v}}'$ was a similarity transformation of $\tilde{\mathbf{v}}$, then $\mathscr{C}_1(\mathbf{v})\geq\mathscr{C}_2(\mathbf{v})$.
This means that in $F_3$ the solution provided by case II has the lowest cost, thus is the optimal solution.
\end{proof}

\subsubsection{Numerical Implementation}
We use the theory developed in the previous sections to construct a numerically robust procedure, given in Algorithm \ref{alg:OTPPAO1fix}, to solve OTPPAO with one fixed vertex.  
It uses the input vertices $\tilde{\mathbf{v}}$, prescribed area $A_o$ and orientation $s$ as inputs. It also uses an area error tolerance $E$ to handle round-off in the area constraint~(\ref{eq:sigconstarea}). Similar to Algorithm 1, Algorithm \ref{alg:OTPPAO1fix} starts by generating the solutions from Case I, then Case II, and chooses the optimal one.
For Case I, we obtain a list $\mathcal{v}_1$ of at most 4 solutions.
For Case II, we obtain a single best solution $\mathbf{v}_2$, the optimally rotated one, the vertices basis and translation to generate all solutions following equation~(\ref{eq:case2b}).
The overall optimal solution $\mathbf{v}_o$ is chosen amongst $\mathcal{v}_1$ and $\mathbf{v}_2$.
The algorithm returns the optimal solution, along with all the solutions from Case I and Case II. 

\begin{algorithm}
    \caption{Optimal Triangle Projection with a Prescribed Area and Orientation with a Fixed Vertex}
    \label{alg:OTPPAO1fix}
    \begin{algorithmic}[1]
    \Require $\tilde{\mathbf{v}}$ - input vertices, $A_o$ - prescribed area, $s$ - prescribed orientation, $E$ - area error tolerance
    \Ensure $\mathbf{v}_o$ - optimal triangle, $\mathcal{v}_1$ - Case I triangle set, $\mathbf{v}_2$ - Case II optimal triangle, $\mathbf{u}_c, \mathbf{v}_t $ - Case II basis and offset 
    \Function{OTTPAO$\_1$}{$\tilde{\mathbf{v}},A_o,s, E = 10^{-3}$}
    \State $\mathcal{v}_1 \gets$ \Call{SolveCase1OneFixedVertex}{$\tilde{\mathbf{v}},A_o,s,E$}  \Comment{Case I solutions}
    \State $(\mathbf{v}_2, \mathbf{v}_c, \mathbf{u}_c, \mathbf{v}_t) \gets$ \Call{SolveCase2OneFixedVertex}{$\tilde{\mathbf{v}},A_o,s,E$}  \Comment{Case II solutions}
    \State $\mathbf{v}_o\gets$ \Call{FindTriangleOfMinimalCost}{$\mathcal{v}_1 \cup \{ \mathbf{v}_2 \}$}
    \Comment{Optimal solution}
    \State \textbf{return} $\mathbf{v}_o, \mathcal{v}_1, \mathbf{v}_2, \mathbf{u}_c, \mathbf{v}_t$
    \EndFunction
    \end{algorithmic}
\end{algorithm}

\begin{algorithm}
    \caption{Closed-form Analytic Solution to Case I of OTPPAO with One Fixed Vertex}
    \label{alg:case11fix}
    \begin{algorithmic}[1]
        \Require $\tilde{\mathbf{v}}$ - input vertices, $A_o$ - prescribed area, $s$ - prescribed orientation, $E$ - area error tolerance
        \Ensure $\mathcal{v}_1$ - solution list
		\Function{SolveCase1OneFixedVertex}{$\tilde{\mathbf{v}},A_o,s,E$}
		\State $D_o(\tilde{\mathbf{v}})) \gets  (\tilde{x}_a-\tilde{x}_c)^2 + (\tilde{y}_a-\tilde{y}_c)^2 + (\tilde{x}_b-\tilde{x}_c)^2 + (\tilde{y}_b-\tilde{y}_c)^2$
		\State $p \gets - \frac{16(2A_o + s A^*(\tilde{\mathbf{v}}))}{A_o}$ \Comment{Compute the coefficients of the depressed quartic} 
		\State $q \gets  \frac{32 D_o(\tilde{\mathbf{v}})}{A_o}$
		\State $r \gets  \frac{256(A_o- s A^*(\tilde{\mathbf{v}}))}{A_o}$ 
		\State $\boldsymbol{\lambda} \gets $ \Call{FerrariSolution}{$p,q,r$} \Comment{Solve for the four possible Lagrange multipliers}
		\State $\mathcal{v}_1 \gets \emptyset$ \Comment{Create an empty set of solutions}
    	\For{$t\gets 1,\dots,4$}
    	\Comment{Generate and select the triangles}
    	    \State $\lambda_o \gets \operatorname{Re}(\boldsymbol{\lambda}(t))$ \Comment{Keep the real part} 
    	    \State $h \gets (\lambda_o^2-16)^\dagger$ \Comment{Compute the inverse denominator}
    	    \State $\mathbf{u} \gets h
			    \begin{bmatrix}
	                \tilde{x}_c\lambda_o^2 + 4s(\tilde{y}_b-\tilde{y}_c)\lambda_o-16\tilde{x}_a \\
		            \tilde{y}_c\lambda_o^2 + 4s(\tilde{x}_c-\tilde{x}_b)\lambda_o-16\tilde{y}_a \\
		            \tilde{x}_c\lambda_o^2 + 4s(\tilde{y}_c-\tilde{y}_a)\lambda_o-16\tilde{x}_b \\
		            \tilde{y}_c\lambda_o^2 + 4s(\tilde{x}_a-\tilde{x}_c)\lambda_o-16\tilde{y}_b \\
			    \end{bmatrix}$ 
			    \Comment{Compute the vertices}\label{lst:linevert2}
			\State $\mathbf{v} = [\mathbf{u}, \tilde{x}_c,\tilde{y}_c]$
            \If{$|s A^*(\mathbf{\mathbf{v}})-A_o|\leq E$} 
            \Comment{Check the area constraint} \label{lst:vertverif2}
            \State $\mathcal{v}_1 \gets \mathcal{v}_1 \cup \{ \mathbf{v} \}$ \Comment{Add the vertices to the solution set}
    	    \EndIf
    	\EndFor 
    	\State \textbf{return} $\mathcal{v}_1$
		\EndFunction
	\end{algorithmic}
\end{algorithm}

\begin{algorithm}
    \caption{Closed-form Analytic Solution to Case II of OTPPAO with One Fixed Vertex}
    \label{alg:case21fix}
    \begin{algorithmic}[1]
    \Require $\tilde{\mathbf{v}}$ - input vertices, $A_o$ - prescribed area, $s$ - prescribed orientation, $E$ - area error tolerance
    \Ensure $\mathbf{v}_2$ - optimal triangle, $\mathbf{u}_c, \mathbf{v}_t$ vertices basis and translation.
    \Function{SolveCase2OneFixVert}{$\tilde{\mathbf{v}},A_o,s,E$}
        \If{$A(\mathbf{\mathbf{v}})\leq E$} 
            \Comment{Check the input's area}
            \State $k \gets s\sign(A^*(\mathbf{\mathbf{v}}))$ \Comment{Compute $k$ for a right isosceles triangle}
        \Else
            \State $k \gets -1$ \Comment{Compute $k$ for a single point}
    	\EndIf
    	\State $\tilde{\mathbf{v}}' \gets \tilde{\mathbf{v}} - v_c$ \Comment{Translate the input vertices}
    	\State $\rho \gets  \sqrt{\frac{\sign(A^*(\tilde{\mathbf{v}}'))A^*(\tilde{\mathbf{v}}')-4kA_o}{2}}$ \Comment{Computes the area constraint parameter}
        \State $\mathbf{u}_c \gets \operatorname{Re}(\rho)
                \begin{bmatrix}
                    0 & ks & 1 & 0
                \end{bmatrix}^\top $ \Comment{Compute the solution basis} \label{lst:linenull2}
        \State $\mathbf{u}_p \gets
                \frac{1}{4}
			    \begin{bmatrix}
		            \tilde{x}_a'+\sign(A^*(\tilde{\mathbf{v}}))\tilde{y}_b' \\ 
		            \tilde{y}_a'-\sign(A^*(\tilde{\mathbf{v}}))\tilde{x}_b' \\ 
		            \tilde{x}_b'-\sign(A^*(\tilde{\mathbf{v}}))\tilde{y}_a' \\ 
		            \tilde{y}_b'+\sign(A^*(\tilde{\mathbf{v}}))\tilde{x}_a' 
	            \end{bmatrix}$
			    \Comment{Compute the particular solution} \label{lst:linepart2}
        \State $\tilde{\mathbf{u}}_2 \gets$ rearrange $\tilde{\mathbf{u}}'$  into a $2\times2$ matrix
        \State $\mathbf{u}_{c2} \gets$ rearrange $\mathbf{u}_{c}$  into a $2\times2$ matrix
        \State $(U_1,\Sigma, U_2) \gets \SVD\left(\tilde{\mathbf{u}_2}\mathbf{u}_{c2}^{\top}\right)$\Comment{Compute the optimal rotation} 
        \State $D \gets \mydiag(1,\det(U_2U_1^\top))$
        \State $R \gets U_2 D U_1^\top$ 
        \State $\mathbf{v}_t \gets  \begin{bmatrix}\mathbf{u}_p & 0 & 0\end{bmatrix}^\top + \begin{bmatrix} v_c & v_c & v_c \end{bmatrix}^\top$ \Comment{Compute the translation vector}
        \State $\mathbf{v}_2 =  \begin{bmatrix}\mathcal{R}(\theta)\mathbf{u}_c  & 0 & 0 \end{bmatrix}^\top + \mathbf{v}_t$
            \Comment{Compute the optimal solution} 
	    \State \textbf{return} $\mathbf{v}_2, \mathbf{u}_c, \mathbf{v}_t$ 
	\EndFunction
    \end{algorithmic}
\end{algorithm}

\subsection{Two Fixed Vertices}
We assume that $v_b$ and $v_c$ are fixed. Thus, we redefine $\mathbf{v} = [v_a, \tilde{x}_b,\tilde{y}_b,\tilde{x}_c,\tilde{y}_c]^\top$ where $v_a$ is the moving vertex.
We take $\frac{\partial \mathscr{L}}{\partial v_a}=0$ which are essentially the first two equalities of equation~(\ref{eq:lagmult}).
We rearrange these equations into a set of linear equations:
\begin{equation}
	\begin{bmatrix}
	x_a \\ y_a 
	\end{bmatrix} = 
	\begin{bmatrix}
		\tilde{x}_a - \frac{s\lambda}{4}(\tilde{y}_c-\tilde{y}_b) \\
		\tilde{y}_a - \frac{s\lambda}{4}(\tilde{x}_b-\tilde{x}_c)
	\end{bmatrix}.
	\label{eq:v_res2}
\end{equation}
We thus solve system~(\ref{eq:v_res2}) with two cases. In Case I, which is the most general, we have $v_b \neq v_c$. In Case II, we have $v_b = v_c$. The solution for the latter case is trivial, since no matter what value we give to $\lambda$, the non-fixed vertex remains the same ($x_a = \tilde{x}_a$ and $y_a = \tilde{y}_a$). 
For Case I, we substitute the result of equation~(\ref{eq:v_res2}) in $\mathbf{v}$ and calculate the signed area constraint~(\ref{eq:sigconstarea}). The resulting linear equation in $\lambda$ is $a_1\lambda + a_0=0$ where:
\begin{align*}
    a_0  &= 4s\left((\tilde{x}_a-\tilde{x}_c)(\tilde{y}_b-\tilde{y}_a)-(\tilde{x}_a-\tilde{x}_b)(\tilde{y}_c-\tilde{y}_a)\right) - 8A_o \\
	a_1  &=  -(\tilde{x}_b^2 + \tilde{x}_c^2 + \tilde{y}_b^2 + \tilde{y}_c^2 - 2\tilde{x}_b\tilde{x}_c - 2\tilde{y}_b\tilde{y}_c).
\end{align*}
We can rewrite these coefficients more compactly. Concretely, $a_0$  contain the area $A^*(\tilde{\mathbf{v}})$ of the input vertices and $a_1$ contains the square distance $P_o$ between the two fixed vertices:
\begin{equation}
    P_o = (\tilde{x}_b-\tilde{x}_c)^2 + (\tilde{y}_b-\tilde{y}_c)^2.
\end{equation}
We solve for $\lambda$ and obtain:
\begin{equation}
    \lambda = 8\frac{sA^*(\tilde{\mathbf{v}})-A_o}{P_o}.
    \label{eq:lambda2}
\end{equation}
We substitute $\lambda$ from equation~(\ref{eq:lambda2}) in equation~(\ref{eq:v_res2}) and obtain:
\begin{equation}
	\begin{bmatrix}
	x_a \\ y_a 
	\end{bmatrix} = 
	\begin{bmatrix}
	\tilde{x}_a \\ \tilde{y}_a 
	\end{bmatrix}
		- 4\frac{sA_o+A^*(\tilde{\mathbf{v}})}{P_o}
	\begin{bmatrix}
	(\tilde{y}_c-\tilde{y}_b) \\ (\tilde{x}_b-\tilde{x}_c)
	\end{bmatrix}.
\end{equation}
The numerically robust procedure of this solution is given in Algorithm \ref{alg:OTPPAO2fix}.

\begin{algorithm}
    \caption{Optimal Triangle Projection with a Prescribed Area and Orientation with Two Fixed Vertices}
    \label{alg:OTPPAO2fix}
    \begin{algorithmic}[1]
    \Require $\tilde{\mathbf{v}}$ - input vertices, $A_o$ - prescribed area, $s$ - prescribed orientation, $E$ - distance error tolerance
    \Ensure $\mathbf{v}_o$ - optimal triangle 
    \Function{OTTPAO$\_2$}{$\tilde{\mathbf{v}},A_o,s, E = 10^{-3}$}
    \State $P_o = (\tilde{x}_b-\tilde{x}_c)^2 + (\tilde{y}_b-\tilde{y}_c)^2$ \Comment{Compute square distance between fixed vertices}
    \If{$P_o>E$}
        \State $\begin{bmatrix}
                	x_a \\ y_a 
                	\end{bmatrix} = 
                	\begin{bmatrix}
                	\tilde{x}_a \\ \tilde{y}_a 
                	\end{bmatrix}
                		- 4P_o^\dagger(sA_o+A^*(\tilde{\mathbf{v}}))
                	\begin{bmatrix}
                	(\tilde{y}_c-\tilde{y}_b) \\ (\tilde{x}_b-\tilde{x}_c)
                \end{bmatrix}$
    \Else
    \State $\begin{bmatrix}
                	x_a \\ y_a 
                	\end{bmatrix} = 
                	\begin{bmatrix}
                	\tilde{x}_a \\ \tilde{y}_a 
                	\end{bmatrix}$
    \EndIf
    \State $\mathbf{v}_o\gets
                    \begin{bmatrix} 
                        x_a & y_a & \tilde{x}_b & \tilde{y}_b & \tilde{x}_c & \tilde{y}_c 
                	\end{bmatrix}^{\top}$
    \Comment{Compute the optimal solution}
    \State \textbf{return} $\mathbf{v}_o$
    \EndFunction
    \end{algorithmic}
\end{algorithm}


\bibliographystyle{elsarticle-num}
\bibliography{EnCoV_Library}


\end{document}